\newtheorem{thm}{Theorem}[section]
\newtheorem{cor}[thm]{Corollary}
\newtheorem{lem}[thm]{Lemma}
\theoremstyle{definition}
\theoremstyle{remark}
\newtheorem{rem}[thm]{Remark}
\numberwithin{equation}{section}
\newcommand{\sgn}{\mathop{\mathrm{sgn}}}
\newcommand{\lr}[1]{\langle #1 \rangle}
\newcommand\eqdef{\stackrel{\mathclap{\normalfont\mbox{def}}}{=}}
\newcommand{\ti}[1]{\tilde{#1}}
\title[Regularizing estimates for the gain term]{Sharp regularizing estimates 
for the gain term of the Boltzmann collision operator}
\author{Jin-Cheng Jiang}
\address{Department of Mathematics, National Tsing Hua University, Hsinchu, Taiwan 30013, R.O.C}
\email{jcjiang@math.nthu.edu.tw}
\begin{document}

\subjclass[2010]{Primary 35Q20; Secondary 35S30}

\keywords{Boltzmann collision operator, Gain term, Regularizing,  hard sphere, hard potential, Maxwell molecule, Fourier integral operator}

\begin{abstract}
We prove the sharp regularizing estimates for the gain term of the Boltzmann collision operator
including hard sphere, hard potential and Maxwell molecule models.
Our new estimates characterize both regularization and convolution properties of the gain term  
and have the following features. The regularizing exponent is sharp both in the $L^2$ based
inhomogeneous and homogeneous Sobolev spaces which is exact the exponent of the kinetic part of 
collision kernel. The functions in these estimates belong to a wider scope of (weighted) Lebesgue spaces   
than the previous regularizing estimates.  For the estimates in homogeneous Sobolev spaces, never seen before, 
only need functions lying in Lebesgue spaces instead of weighted Lebesgue spaces, i.e., no loss of weight 
occurs in this case.

\end{abstract}

\maketitle

\section{Introduction}

The Boltzmann collision operator reads
\begin{equation}
Q(f,f)(v)=\int_{\mathbb{R}^3}\int_{\omega\in S^2_+}(f'f'_{*}-ff_{*})B(v-v_{*},\omega)d\Omega(\omega)dv_{*}
\end{equation}
where $d\Omega(\omega)$ is the solid element in the direction of $\omega$ and
the abbreviations $f'=f(x,v',t)\;,\;f'_{*}=f(x,v'_{*},t)\;,\;f_{*}=f(x,v_{*},t)$, where
\begin{equation}\label{E:pre-collision}
v'=v-[\omega\cdot(v-v_{*})]\omega\;,\;v'_{*}=v_{*}+[\omega\cdot(v-v_{*})]\omega\;,\;\omega\in S^2_+
\end{equation}
stand for the pre-collision velocities of particles which after collision have velocities $v$
and $v_{*}$. In the study of the Boltzmann equation, one of the most important tasks is to understand the
properties of the collision operator.   

When Grad's cutoff assumption 
$$\int_{S^2_+} B(v-v_{*},\omega)d\Omega(\omega)<\infty$$ 
is satisfied, we can split the collision operator into gain and loss terms, namely
$$
Q^+(f,f)=\int_{\mathbb{R}^3}dv_{*}\int_{S^2_+}B(v-v_{*},\omega)f'f'_{*}d\Omega(\omega)
$$
$$
Q^-(f,f)=\int_{\mathbb{R}^3}dv_{*}\int_{S^2_+}B(v-v_{*},\omega)ff_{*}d\Omega(\omega).
$$
The loss term is in fact $$f(Lf)$$ where $L$ is a convolution operator in velocity variable,
while the gain term is more complicated. To study the gain term, it is convenient to consider the quadratic operator, i.e.
\begin{equation}\label{D:bi-Q-plus}
Q^+(f,g)=\int_{\mathbb{R}^3}\int_{S^2_+}B(v-v_{*},\omega)f(v')g(v'_{*})d\Omega(\omega)dv_{*}.
\end{equation}
In this paper, we consider the collision kernels of the form 
\begin{equation}\label{D:kernel-B}
\begin{split}
& B(v-v_*,\omega)=|v-v_*|^{\gamma}b(\cos\theta)=|v-v_*|^{\gamma}\cos\theta,\\
& 0\leq\gamma\leq 1\;,\;0\leq\theta\leq \pi/2
\end{split}
\end{equation}
where $\cos\theta=(v-v_*)\cdot\omega/|v-v_*|$. Thus~\eqref{D:kernel-B} concludes the hard sphere, hard potentials
and Maxwell molecule models.

In the study of the renormalized solutions of the Boltzmann equation, Lions~\cite{Lio94} found that the gain operator
$Q^+(f,g)$ acts like a regularizing operator on each of its components when the other is frozen, i.e.
\begin{equation}\label{E:Lions's}
\|Q^+(f,g)\|_{H^1}\leq C\|g\|_{L^1}\|f\|_{L^2}\;,\;\|Q^+(f,g)\|_{H^1}\leq C\|g\|_{L^2}\|f\|_{L^1}
\end{equation}
under the assumption that $B(|z|,\theta)\in C^{\infty}_c(({\mathbb R}^3\backslash\{0\})\times (0,\frac{\pi}{2}))$. The exponent 
$1$ is due to that the compact support assumption. 
The definitions of  Sobolev spaces and the other function spaces are give in the notation subsection in the end of this section. 
The proof of~\eqref{E:Lions's} is based on a duality argument and the estimate of a Radon transform which 
relies the theory of Fourier integral operators (FIOs).
It is worthwhile to mention  that the regularity theory of the generalized Radon transform was studied in detail by Sogge and 
Stein~\cite{Sog85,Sog86,Sog90} at the end of the eighties.
Later Wennberg~\cite{Wen94} gave a simplified  proof of~\eqref{E:Lions's} by using the Carleman representation of $Q^+$ and 
classical Fourier transform. The estimates for full kernel without compactness assumption were given by 
Wennberg~\cite{Wen94}, Bouchut \& Desvillettes~\cite{BD98}, Lu~\cite{Lu98}, Mouhot \& Villani~\cite{Mou04} in the forms
different with~\eqref{E:Lions's}, see~~\cite{JC12} for more details. 
In~\cite{JC12}, the author proved that the estimates of the form~\eqref{E:Lions's} hold for the full kernels~\eqref{D:kernel-B}
for lower regularity. More precisely, with assumption~\eqref{D:kernel-B} and $\gamma>0$ we have
\begin{equation}\label{E:JC12}
\|Q^+(f,g)\|_{H^{\gamma-}}\leq C\|g\|_{L^1_\gamma}\|f\|_{L^2_\gamma}\;,\;\|Q^+(f,g)\|_{H^{\gamma-}}\leq C\|g\|_{L^2_\gamma}
\|f\|_{L^1_\gamma}
\end{equation}
where  $\gamma-$ means $\gamma-\varepsilon>0$ for arbitrary small $\varepsilon>0$ and 
thus the constant $C$ depends on $\varepsilon$. 

On the other hand, Gustafsson~\cite{Gus88} proved that $Q^+(f,g)$ can be regarded as 
a convolution operator, and he used this fact to prove uniform $L^p$ estimates for solutions of the space 
homogeneous Boltzmann equation. The estimates by Duduchava, Kirsch and Rjasanow~\cite{DKR06} , 
Alonso and Carneiro~\cite{AC10}, Alonso, Carneiro and Gamba~\cite{ACG10} and Alonso and Gamba~\cite{AG11}  are also of this type. 
Assume collision kernels 
\[
B(|z|,\theta)=|z|^{\lambda}b(\cos\theta)
\]  
with $b(\cos\theta)$ satisfies Grad's cut-off assumption.  Alonso, Carneiro and Gamba~\cite{ACG10} obtained 
that if $\lambda,\alpha\geq 0$ and $1\leq p,q,R\leq \infty$ with $1/p+1/q=1+1/R$, then
\begin{equation}\label{E:convolution-hard}
\|Q^+(f,g)\|_{L^{R}_{\alpha}}\leq C\|g\|_{L^p_{\alpha+\lambda}}\|f\|_{L^q_{\alpha+\lambda}}
\end{equation}
where  the explicit $C$ is given. 
For $-n<\lambda<0$ ( $n$ is the dimension of variable $z$ ) and
$1/p+1/q=1+\lambda/n+1/R$, they also obtained 
\begin{equation}\label{E:convolution-soft}
\|Q^+(f,g)\|_{L^R}\leq C\|g\|_{L^p}\|f\|_{L^q}.
\end{equation}

The main result of this paper is the new estimates in Theorem~\ref{T:main} 
below which not only improve~\eqref{E:JC12}, characterize both 
regularization and convolution properties of the gain term but also have the following features.
The regularizing exponent is sharp both in the $L^2$ based
inhomogeneous and homogeneous Sobolev spaces which is exact the exponent of the kinetic part of 
collision kernel. The functions in these estimates belong to a wider scope of (weighted) Lebesgue spaces   
than the previous regularizing estimates. For the estimates in homogeneous Sobolev spaces, we only 
need the functions lying in  Lebesgue spaces instead of weighted Lebesgue spaces, i..e., no loss
of weight occurs in this case. To our best knowledge, the homegeneous is not seen before in the literature for the estimate 
of the gain term with full kernel, note that the estimates of Lions requir compactness on relative velocity. More precisely, 
we have the following.
\begin{thm}\label{T:main}
Let $Q^+(f,g)$ be the operators defined by~\eqref{D:bi-Q-plus} with collision kernels~\eqref{D:kernel-B}.
For $1\leq p,q\leq 2 , 1/p+1/q=3/2$ we have estimates in homogeneous Sobolev spaces as
\begin{equation}\label{E:Gain-smooth}
\|Q^{+}(f,g)\|_{\dot{H}^{\gamma}}\leq C\|g\|_{L^p}\|f\|_{L^q}.
\end{equation}
and estimates in inhomogeneous Sobolev spaces as
\begin{equation}\label{E:Gain-smooth-in}
\|Q^{+}(f,g)\|_{{H}^{\gamma}}\leq C\|g\|_{L^p_\gamma}\|f\|_{L^q_\gamma},
\end{equation}
The constants $C$ depend on $p,q$ and $\gamma$.
\end{thm}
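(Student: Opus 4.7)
The plan is to establish the homogeneous estimate~\eqref{E:Gain-smooth} first, via duality and a bilinear Fourier integral operator argument, and then to deduce the inhomogeneous estimate~\eqref{E:Gain-smooth-in} by combining the homogeneous bound with a known convolution-type inequality. By duality in $\dot{H}^\gamma$, I reduce~\eqref{E:Gain-smooth} to the trilinear estimate
\[
|\langle Q^+(f,g),h\rangle|\leq C\|f\|_{L^q}\|g\|_{L^p}\|h\|_{\dot{H}^{-\gamma}}.
\]
Inserting the $\omega$-representation~\eqref{D:bi-Q-plus} and passing to Carleman variables via~\eqref{E:pre-collision}, the trilinear form is rewritten as an integral of $f(v')g(v_*')h(v)$ against a density carrying the kinetic factor $|v-v_*|^\gamma$ and supported on the hyperplane through $v$ perpendicular to $v-v'$. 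This exhibits $Q^+$, paired with $h$, as a generalized Radon transform of the type analyzed by Sogge and Stein~\cite{Sog85,Sog86,Sog90}.

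The next step is to verify the rotational curvature condition for the associated phase, so that the $L^2$-Sobolev theory of these FIOs returns \emph{exactly} $\gamma$ orders of smoothing, with no $\varepsilon$-loss. Combined with H\"older and Cauchy--Schwarz this produces the two endpoint cases $(p,q)=(1,2)$ and $(2,1)$ of~\eqref{E:Gain-smooth}. Crucially, because the output norm is homogeneous, the $|\xi|^\gamma$ weight precisely balances the kinetic factor $|v-v_*|^\gamma$, so no polynomial weights on the inputs are required. Bilinear Riesz--Thorin interpolation then fills in the full segment $1\leq p,q\leq 2$, $1/p+1/q=3/2$.

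For the inhomogeneous estimate~\eqref{E:Gain-smooth-in} it suffices to add an $L^2$ bound on $Q^+(f,g)$. This is supplied by the Alonso--Carneiro--Gamba convolution inequality~\eqref{E:convolution-hard} with $R=2$ and $\alpha=0$, applicable because the hypothesis $1/p+1/q=3/2=1+1/R$ is exactly Young's index relation, and the weight $\gamma$ on $f,g$ absorbs the $|v-v_*|^\gamma$ growth of the kernel. Adding this $L^2$ bound to~\eqref{E:Gain-smooth} yields~\eqref{E:Gain-smooth-in}.

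The main obstacle is obtaining the sharp exponent $\gamma$ (rather than the $\gamma-\varepsilon$ of~\eqref{E:JC12}) while simultaneously avoiding polynomial weights in the homogeneous estimate. Both features require a scaling-clean $TT^*$ analysis that returns a precisely $|\xi|^\gamma$ gain instead of a polynomial loss coming from a Littlewood--Paley summation. The low-frequency regime, where ordinary convolution bounds would degenerate, has to be controlled through the simultaneous vanishing of the $|\xi|^\gamma$ weight on the output side. Keeping these cancellations tight through the bilinear interpolation, without introducing any $\varepsilon$-loss, is the crux of the argument.
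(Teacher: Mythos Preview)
Your duality reduction to a trilinear bound and the rewriting of $Q^+$ paired with $h$ as a Radon transform is exactly how the paper begins. Your route to the inhomogeneous estimate~\eqref{E:Gain-smooth-in} --- add the $L^2$ bound from the Alonso--Carneiro--Gamba inequality~\eqref{E:convolution-hard} at $R=2$, $\alpha=0$ to the homogeneous estimate --- is valid and in fact simpler than what the paper does: there the inhomogeneous case is proved from scratch by splitting $Q^+=Q^+_{\mathbbm{s}}+Q^+_{\overline{\mathbbm{s}}}$ into small and large relative velocity and running a separate kernel analysis for each piece (Lemma~\ref{L:H-v-v_*-ineq}). Your interpolation between the endpoints $(p,q)=(1,2)$ and $(2,1)$ is also fine; the paper instead uses a single three-factor H\"older splitting $1/q'+(q'-2)/(2q')+(q'-p)/(pq')=1$ that delivers all $(p,q)$ at once from the same two $L^2$ bounds on the Radon transform (Lemma~\ref{L:T-v-v_*-ineq}), but this is a matter of taste.

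The genuine gap is in the step you flag as ``the crux''. Invoking Sogge--Stein rotational curvature for the Radon transform is precisely what Lions did under the compact-support assumption on $B$, and what the author's earlier paper~\cite{JC12} did for the full kernel --- and it produced only $\gamma-\varepsilon$, because after a Littlewood--Paley decomposition in $|v-v_*|$ the dyadic bounds do not sum. You correctly name the obstacle (``scaling-clean $TT^*$ \ldots\ instead of a polynomial loss coming from a Littlewood--Paley summation'') but give no mechanism for overcoming it. The paper's new input is exactly this mechanism: writing $\mathbb{T}h=T(D^{-\gamma}h)$ where $T$ has the \emph{symmetric} amplitude $(|x||\xi|)^\gamma\int_{S^2_+}e^{-i(x\cdot\omega)(\xi\cdot\omega)}b(\cos\theta)\,d\Omega(\omega)$, and proving $T:L^2\to L^2$ directly (Lemma~\ref{L:T-main}). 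The $(x,\xi)$-symmetry makes the stationary-phase symbols land in the SG class $SG^{\gamma-1,\gamma-1}$, and then a Cotlar--Stein argument (Lemma~\ref{lemma:Fourier_integral}) over a \emph{double} dyadic grid in $(|x|,|\xi|)$ sums without loss. The phase space is further cut into conical regions $\Gamma_{z,I},\Gamma_{z,II},\Gamma_{z,III}$ according to the size of $|x||\xi|\sin^2\theta_0$, with regions II handled by a rescaling trick that pulls them back to region~I, and region~III (where stationary phase degenerates) handled by Schur's test. None of this is supplied by the off-the-shelf rotational-curvature theory you cite, and without it your proposal reproduces~\eqref{E:JC12}, not~\eqref{E:Gain-smooth}.
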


\begin{rem}
We note that the product of the angular function $b(\cos\theta)=\cos\theta$ and
$\sin\theta$, Jacobin of solid element, has the feature that it decays to $0$ when $\theta$ tends to 
$0$ or $\pi/2$ which is needed for our proof of estimates. Indeed we can get the same estimates~\eqref{E:Gain-smooth-in} and 
~\eqref{E:Gain-smooth} when $b(\cos\theta)=\cos\theta$ is
replaced by other  angular functions with the suitable decaying property near $\pi/2$, but we prefer to
use $b(\cos\theta)=\cos\theta$ for the simplicity of representation. 
\end{rem}

\begin{rem}
In many papers, the authors consider the collision operator operator defined on $S^2$ instead of physical one $S^2_+$ by 
extending evenly the range of $\theta$ to $[0,\pi]$.  This extension will not affect the results as we explained in the paragraph
right after~\eqref{E:a-g0-loc}. 
\end{rem}

The geometric argument in~\cite{JC12} can relax the compactness assumption in the angle part of the collision kernel but still need 
compactness on relative velocity. Thus the estimate ~\eqref{E:JC12} for the operator with full kernel must have loss of regularity 
as it comes from scaling argument and summation of estimates for different scale. Theorem~\ref{T:main} were
obtained through the new understanding on the role of relative velocity of the collision kernel. That is the  estimate~\eqref{E:Gain-smooth} 
disclose the fact that gain term trades the relative velocity for the regularity to the same exponent up to $1$, a fact which is vague in the 
estimate like~\eqref{E:JC12} or even~\eqref{E:Gain-smooth-in}.  Based on this, we can drop the compactness 
on relative velocity and integrate the geometric argument in~\cite{JC12}  to obtain the estimates  with features mentioned above, 
see more detail after Lemma~\ref{L:T-main-1} for technique explaination.

By the embedding theorem in homogeneous Sobolev space, see for example~\cite{BCD11}, we immediately obtain the following. 
\begin{cor} Let $Q^+(f,g)$ be the same as Theorem~\ref{T:main}. For $1\leq p,q\leq 2 , 1/p+1/q=3/2$  and $2\leq R$ 
satisfies $1/2=\gamma/3+1/R$, then we have 
\begin{equation}\label{E:Gain-integrability}
\|Q^{+}(f,g)\|_{L^R}\leq C\|g\|_{L^p}\|f\|_{L^q}.
\end{equation}
\end{cor}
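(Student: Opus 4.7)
The corollary is a direct consequence of combining Theorem~\ref{T:main} with the homogeneous Sobolev embedding in $\mathbb{R}^3$, so the plan is essentially a one-step chase through a well-known inclusion. I would proceed as follows.

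First, I would invoke estimate~\eqref{E:Gain-smooth} from Theorem~\ref{T:main}: for $1\leq p,q\leq 2$ with $1/p+1/q=3/2$,
\begin{equation*}
\|Q^{+}(f,g)\|_{\dot{H}^{\gamma}} \leq C \|g\|_{L^p}\|f\|_{L^q}.
\end{equation*}
This places $Q^+(f,g)$ in the homogeneous Sobolev space $\dot H^\gamma(\mathbb{R}^3)$ with controlled norm.

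Next, I would appeal to the homogeneous Sobolev embedding $\dot H^s(\mathbb{R}^n)\hookrightarrow L^R(\mathbb{R}^n)$, valid whenever $0\leq s<n/2$ and $1/R=1/2-s/n$; in our setting $n=3$ and $s=\gamma\in[0,1]$, so the constraint $\gamma<3/2$ is automatically satisfied and the embedding specializes to $1/R=1/2-\gamma/3$, which is exactly the scaling relation $1/2=\gamma/3+1/R$ imposed in the corollary. The range $2\leq R\leq 6$ is recovered by letting $\gamma$ vary over $[0,1]$ (with the endpoint $\gamma=0$ being trivial since $\dot H^0=L^2$). The reference~\cite{BCD11} already quoted in the paper provides this embedding.

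Concatenating the two bounds yields
\begin{equation*}
\|Q^+(f,g)\|_{L^R} \leq C_{\mathrm{Sob}} \|Q^+(f,g)\|_{\dot H^\gamma} \leq C \|g\|_{L^p}\|f\|_{L^q},
\end{equation*}
which is~\eqref{E:Gain-integrability}, with the constant absorbing both the embedding constant and the constant from Theorem~\ref{T:main} (hence depending on $p,q,\gamma$). There is no serious obstacle here: all of the analytic work lies inside Theorem~\ref{T:main}, and the corollary only requires verifying that the scaling $1/p+1/q=3/2$ and $1/R=1/2-\gamma/3$ are compatible with the hypotheses of the Sobolev embedding, which they are throughout the admissible range of $\gamma$.
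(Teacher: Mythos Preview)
Your proposal is correct and matches the paper's approach exactly: the paper does not give a formal proof of this corollary but simply states that it follows immediately from the homogeneous Sobolev embedding (citing~\cite{BCD11}) applied to estimate~\eqref{E:Gain-smooth}. Your write-up makes this one-line argument explicit and verifies the scaling compatibility, which is all that is required.
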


It is interesting to note that the relation of $p,q,R,\gamma$ in above corollary can be restated as 
\begin{equation}
3/2=1/p+1/q=1+\gamma/3+1/R.
\end{equation}
This relation is the same as that for the soft potential given by~\eqref{E:convolution-soft} except that  the sum 
must be $3/2$. And the functions of estimates~\eqref{E:Gain-integrability} are lying in Lebesgue spaces as the 
estimates for the soft potentials.  
 The constraint $3/2$ is because our estimates in Theorem~\ref{T:main} is $L^2$ based. To get the estimates 
without such constraint, one needs the regularizing estimates for all $L^p,1\leq p\leq \infty$ based Sobolev spaces 
which will be done in our upcoming work. Then one can derive the unified estimates for collision operators
of all models and which are both scaling fit and have no loss of weight.  

It is interesting to note that regularizing exponent $\gamma$ is actually determined by the part of small relative velocity  
of kinetic factor of the collision kernels. If we modify~\eqref{D:kernel-B} to $(1+|v-v_*|)^{\gamma}\cos\theta$ or 
equivalently chop off small relative velocity smoothly, then the regularizing exponent is always $1$ independent 
of $\gamma$.  To give a precise statement, we let $\rho\in C^{\infty}(\mathbb{R}),\; 0\leq \rho \leq 1$ be supported in 
the open interval $(4,16)$
and satisfy 
\begin{equation}\label{D:partition-of-unity-rho}
1=\sum_{k\in\mathbb{Z}}\rho(2^{-k} r),\; r>0
\end{equation}
with property ${\rm supp}\;\rho(2^{j} r)\cap\; {\rm supp}\;\rho(2^k r)=\emptyset $ if $|j-k|\geq 2$.
Define $\mathbbm{s}(0)=0$ and 
\begin{equation}\label{D:mathbbm-s}
\mathbbm{s}(r)=\sum_{k\leq 0}\rho(2^{-k} r),\;r > 0.
\end{equation}
Then $\mathbbm{s}$ is a smooth positive function on $[\;0,\infty)$ satisfying 
$\mathbbm{s}(r)=1$ when $0\leq r<4$ and $\mathbbm{s}(r)=0$ when $r>16$.  Let 
$\overline{\mathbbm{s}}(r)=1-\mathbbm{s}(r), r\geq 0$. Then we 
split the collision kernel $B$ into small and large relative velocity parts, by
\[
B(z,\omega)=\mathbbm{s}(|z|)B(z,\omega)+\overline{\mathbbm{s}}(|z|)B(z,\omega)\eqdef 
B_{\mathbbm{s}}(z,\omega)+B_{\overline{\mathbbm{s}}}(z,\omega),
\]
and split $Q^+$ into two parts by plugging above into~\eqref{D:bi-Q-plus} and write
\begin{equation}\label{D:Q-velocity-split}
Q^+(f,g)=Q_\mathbbm{s}^+(f,g)+Q^+_{\overline{\mathbbm{s}}}(f,g). 
\end{equation} 
Then we have the following.
\begin{cor}\label{C:Q-L}
Let $Q^+(f,g),p,q$ be the same as Theorem~\ref{T:main}. Let $Q^+_{\overline{\mathbbm{s}}}$ be the $Q^+$
with small relative velocity being chopped off smoothly defined in~\eqref{D:Q-velocity-split}. Then we have
\begin{equation}\label{E:Gain-smooth-in-L}
\|Q^+_{\overline{\mathbbm{s}}}(f,g)\|_{{H}^{1}}\leq C\|g\|_{L^p_\gamma}\|f\|_{L^q_\gamma},
\end{equation}
and 
\begin{equation}\label{E:Gain-smooth-L}
\|Q^+_{\overline{\mathbbm{s}}}(f,g)\|_{\dot{H}^{1}}\leq C\|g\|_{L^p}\|f\|_{L^q}.
\end{equation}
The constants $C$ depend on $p,q$ and $\gamma$.
\end{cor}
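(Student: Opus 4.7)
The corollary upgrades the regularity gain from $\gamma$ to $1$ once small relative velocities are excised. Since $\overline{\mathbbm{s}}(r)=0$ for $r<4$ and $\gamma\leq 1$, the radial factor $\psi(r):=\overline{\mathbbm{s}}(r)\,r^{\gamma-1}$ is a smooth non-negative function bounded by $4^{\gamma-1}$ and supported in $[4,\infty)$, so that
$$
B_{\overline{\mathbbm{s}}}(z,\omega)=\psi(|z|)\,\bigl(|z|\cos\theta\bigr).
$$
Thus $B_{\overline{\mathbbm{s}}}$ is the ``$\gamma=1$'' kernel of~\eqref{D:kernel-B} with a harmless radial cut-off killing small $|z|$. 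At large $|z|$ one still has $\psi(|z|)|z|=|z|^{\gamma}$, which explains why the weight on $f,g$ in~\eqref{E:Gain-smooth-in-L} is $\lr{\cdot}^{\gamma}$ rather than $\lr{\cdot}^{1}$: it is the (now absent) small-$|z|$ behavior that caps the regularity exponent, while the large-$|z|$ growth of the kernel still dictates the weight.

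With this reformulation in hand, the strategy is to rerun the proof of Theorem~\ref{T:main} with regularity target $1$ in place of $\gamma$. Dyadically decompose
$$
B_{\overline{\mathbbm{s}}}(z,\omega)=\sum_{k\geq 1}\rho(2^{-k}|z|)\,|z|^{\gamma}\cos\theta,\qquad Q^{+}_{\overline{\mathbbm{s}}}=\sum_{k\geq 1}Q^{+}_{(k)},
$$
so that $Q^{+}_{(k)}$ has kernel supported on $|z|\sim 2^{k}$. In the argument underlying Theorem~\ref{T:main}, each dyadic piece obeys a full unit-derivative gain via the FIO / Radon transform machinery reflected in Lemma~\ref{L:T-main-1}; the overall regularity is capped at $\gamma$ there solely because of the complementary sum over $k\leq 0$, which is absent here. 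Summing the $k\geq 1$ estimates then produces~\eqref{E:Gain-smooth-in-L}: the $2^{k\gamma}$ factor picked up from $|z|^{\gamma}$ on the $k$-th annulus is absorbed by $\lr{v}^{\gamma}\leq C\bigl(\lr{v-v_{*}}^{\gamma}+\lr{v_{*}}^{\gamma}\bigr)$, which under the collisional change of variables becomes the weights $\lr{\cdot}^{\gamma}$ on $f$ and $g$.

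For the homogeneous estimate~\eqref{E:Gain-smooth-L}, the main obstacle is closing the dyadic sum \emph{without} any weight on $f,g$. Here I would use that, after one derivative is applied and distributed through the FIO analysis, $Q^{+}_{(k)}(f,g)$ is essentially frequency-localized at scale $2^{k}$, so a Littlewood--Paley square-function argument assembles the sum in $\dot{H}^{1}$: the spatial size $|z|^{\gamma}$ on $|z|\sim 2^{k}$ is balanced against the frequency localization at scale $2^{k}$ rather than pushed onto a weight. This is the heart of the ``new understanding of the role of relative velocity'' emphasized in the introduction, and making the frequency localization quantitative enough to close the square function cleanly is where I expect the bulk of the technical work to lie.
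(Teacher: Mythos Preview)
Your reformulation of $B_{\overline{\mathbbm{s}}}$ as a bounded radial modulation of the $\gamma=1$ kernel is correct and captures the right intuition. But the execution you propose is both more laborious than the paper's and contains a genuine gap in the homogeneous case.

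The paper's proof is essentially a one-line observation buried in the FIO analysis of Lemma~\ref{L:T-main}. In the stationary phase expansion~\eqref{E:a-I}, the symbols $p_{\pm}$ lie in $SG^{\gamma-1,\gamma-1}$, not merely $SG^{0,0}$ (see~\eqref{E:s-plus},~\eqref{E:s-minus} and the text following~\eqref{E:a-I}). The cut-off $\overline{\mathbbm{s}}$ restricts the relative velocity to $|x|>4$, so in the decomposition~\eqref{D:a-decom-three} only the pieces $T_A$, $T_{B,1}$, $T_{C,1}$ (those with $|x|>8$) survive. On these pieces the symbol carries an honest factor $(|x||\xi|)^{\gamma-1}$; since $|x|>8$ this gives an extra $|\xi|^{\gamma-1}$ of decay in the $\xi$-variable, which one simply absorbs into the definition of $T$: replace $|\xi|^{\gamma}$ by $|\xi|^{1}$ in~\eqref{D:a-x-xi} and the resulting symbol is still of order $0$. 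Hence the parallel of Lemma~\ref{L:T-v-v_*-ineq} holds with $\dot{H}^{-1}$ on the right, and the duality argument of the proof of Theorem~\ref{T:main} gives~\eqref{E:Gain-smooth-L} directly. No dyadic resummation, no square function. The inhomogeneous case~\eqref{E:Gain-smooth-in-L} is handled the same way via~\eqref{E:H-v-v-*-region}, keeping the same $\lr{\cdot}^{\gamma}$ weights already present in $H_{\overline{\mathbbm{s}}}$.

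The gap in your proposal is the frequency-localization claim for~\eqref{E:Gain-smooth-L}. Spatial localization of the kernel at $|v-v_*|\sim 2^{k}$ does \emph{not} force $Q^{+}_{(k)}(f,g)$ to be frequency-localized at any particular scale: the output frequency content is governed by the FIO phase $\psi_{\pm}(x,\xi)=\tfrac{1}{2}(x\cdot\xi\mp|x||\xi|)$ and by the frequencies of $f$ and $g$, not by the scale of $|x|$ alone. What the analysis actually yields is that the amplitude at $|x|\sim 2^{k}$ \emph{decays} like $(2^{k}|\xi|)^{\gamma-1}$ for large $|\xi|$ --- uniform decay in $\xi$, not concentration near $|\xi|\sim 2^{k}$. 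A Littlewood--Paley square-function summation based on near-orthogonality of the $Q^{+}_{(k)}$ in frequency would therefore not close; the pieces overlap heavily in frequency and you would be left with a divergent $\ell^{1}$ sum in $k$. The mechanism that makes the proof work is the symbol \emph{order}, not orthogonality of dyadic pieces.
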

The proof of above corollary will be given in Section~\ref{Proof of lemmas} since it is 
a consequence of the proof of the main Theorem.

{\bf Notations}\hfil\\
We set Japanese bracket  $\lr{v}=(1+|v|^2)^{1/2},\;v\in\mathbb{R}^3$, $(a\cdot b)=\sum_{i=1}^3 a_ib_i$  
the scalar product in $ \mathbb{R}^3$ and $\lr{f,g}=\int_{\mathbb{R}^3}f(x)g(x)dx$  the inner product in 
$L^2({\mathbb{R}^3}). $
The differential operator
$D^s,s\in\mathbb{R}$ is expressed through the Fourier transform:
\[
D^s f(x)=(2\pi)^{-3}\int_{\mathbb{R}^3} e^{ix\cdot\xi}|\xi|^s \widehat{f}(\xi)d\xi
\]
where the Fourier transform
$
\widehat{f}(\xi)=\int_{\mathbb{R}^3} e^{-ix\cdot\xi} f(x)dx.
$
The weighted Lebesgue, 
fractional homogeneous and inhomogeneous Sobolev spaces are denoted by
\[
\begin{split}
& \|f\|_{L^p_q}={\Big (}\int_{\mathbb{R}^3} |f(v)|^p\lr{v}^{pq} dv {\Big )}^{1/p}\;,\\
& \|f\|_{\dot{H}^{\alpha}}=\||\xi|^{\alpha}\widehat{f}(\xi)\|_{L^2}\;,\;
\|f\|_{{H}^{\alpha}}=\|\lr{\xi}^{\alpha}\widehat{f}(\xi)\|_{L^2}
\end{split}
\]

We use the multi-indices notation $\partial_x^{\alpha}=\partial_{x_1}^{\alpha_1}\cdots\partial_{x_d}^{\alpha_d}$.
By $\partial_x$ or by $\nabla_x$, we will denote the gradient.
A function $p(x,\xi)\in C^{\infty}(\mathbb{R}^d\times\mathbb{R}^d)$ satisfying 
\begin{equation}\label{D:symbol-definition}
 |\partial^{\alpha}_x \partial^{\beta}_{\xi}p(x,\xi)|
\leq C_{\alpha,\beta} \lr{\xi}^{m-|\beta|}
\end{equation} 
for any multi-indices $\alpha$ and $\beta$ is called a symbol of order $m$. The class of such function is  denoted by $S^{m}_{1,0}$.
We will see that the symbols $p(x,\xi)$ in this paper always enjoy the better decaying condition, i.e., 
\begin{equation}\label{D:symbol-definition-SG}
 |\partial^{\alpha}_x \partial^{\beta}_{\xi}p(x,\xi)|
\leq C_{\alpha,\beta}\lr{x}^{m_1-|\alpha|}\lr{\xi}^{m_2-|\beta|}
\end{equation} 
for any multi-indices $\alpha$ and $\beta$ and which is called a $SG$ symbol of order $(m_1,m_2)$, used by 
Cordes~\cite{Cord95} and Coriasco~\cite{Cori99}.
We  use  $SG^{m_1,m_2}$ to denote the set of such symbols.  For each $p(x,\xi)\in S_{1,0}^l$, the associate operators 
\[
P(x,D) h(x)=\int_{\mathbb{R}^n} e^{ix\cdot\xi} p(x,\xi) \widehat{h}(\xi)d\xi
\]
is called a pseudodifferential operator of order $l$.  The standard notation $S^{-\infty}_{1,0}=\cap_{\;l} S^{l}_{1,0},l\in\mathbb{Z}$
is also used. If $p(x,\xi)\in S^{-\infty}_{1,0}$, it is called a symbol of the smooth operator.
The operator 
\[
Sf(x)=\int_{\mathbb{R}^3} e^{i\psi(x,\xi)} a(x,\xi) \widehat{f}(\xi)d\xi
\]
with symbol $a(x,\xi)\in S^l_{1,0}$  and the phase function $\psi(x,\xi)$ satisfies  
non-degeneracy condition is called a Fourier integral operator of order $l$.
We say a phase function $\psi(x,\xi)$
satisfies the non-degeneracy condition if there is a constant $c>0$ such that 
\begin{equation}\label{D:nondegeneracy}
|\det  {\Big [} \partial_{x}\partial_{\xi} \psi(x,\xi) {\Big ]} | =|\det  {\Big [} \partial^2\psi(x,\xi)/\partial_{x_i}\partial_{\xi_j}  {\Big ]} |\geq c>0
\end{equation} 
for all $(x,\xi)\in{\rm supp}\;a(x,\xi)$.

\section{Reduction and Almost Orthogonality}

Inspired by Lions~\cite{Lio94}, we found that the proof of the Theorem~\ref{T:main} relies on 
the understanding of the following Radon transform  
\begin{equation}\label{Def:T}
\mathbb{T}h(x)=|x|^{\gamma}\int_{\omega\in S^2_+} b(\cos\theta)h(x-(x\cdot\omega)\omega) d\Omega(\omega), 
\;0\leq\gamma\leq 1
\end{equation}
with
$\cos\theta={(x\cdot\omega)}/{|x|}, x\neq 0$, $x=|x|(0,0,1)$ and 
$\omega=(\cos\varphi\sin\theta,\sin\varphi\sin\theta,\cos\theta)$, $0\leq\theta\leq \pi/2 $. 
We use the notation $\|F(v,v_*)\|_{L^p(v)}$ to denote the $L^p$ norm of 
$F(v,v_*)$ with respect to variable $v$ where variable $v_*$ is regarded as the parameter.  
We will see soon that the estimate~\eqref{E:Gain-smooth} is the consequence of the following 
Lemma which concerning the estimates of $\mathbb{T}$.
\begin{lem}\label{L:T-v-v_*-ineq}
Let $\mathbb{T}$ be the operator defined by~\eqref{Def:T} and $\tau_m$ be the translation 
operator $\tau_m h(\cdot)=h(\cdot+m)$. We have
\begin{equation}\label{E:T-v-ineq}  
\sup\limits_{v_*}\|(\tau_{-v_*}\circ \mathbb{T}\circ\tau_{v_*} )h(v)\|_{L^2(v)}
\leq C \|h\|_{\dot{H}^{-\gamma}}
\end{equation}
and
\begin{equation}\label{E:T-v_*-ineq} 
\sup\limits_{v}\|(\tau_{-v_*}\circ \mathbb{T}\circ\tau_{v_*} )h(v)\|_{L^2(v_*)}
\leq C \|h\|_{\dot{H}^{-\gamma}}.
\end{equation}
\end{lem}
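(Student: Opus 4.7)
The plan is to reduce both inequalities, using translation invariance of $\dot H^{-\gamma}$ and a change of variables, to mapping bounds $\dot H^{-\gamma}\to L^2$ for two closely related weighted Radon-type operators, and then treat these by a single-scale Fourier integral operator estimate combined with almost orthogonality. Since the $\dot H^{-\gamma}$ norm is translation invariant, (\ref{E:T-v-ineq}) is equivalent to the $v_*$-free bound $\|\mathbb{T}h\|_{L^2}\leq C\|h\|_{\dot H^{-\gamma}}$. For (\ref{E:T-v_*-ineq}), the substitution $u=v-v_*$ (Jacobian one, $v$ fixed) rewrites the conjugated operator as $|u|^{\gamma}\int b(\cos\theta)\,h(v-(u\cdot\omega)\omega)\,d\Omega(\omega)$, so, setting $h^v(y):=h(v-y)$, the problem reduces to $\|\widetilde{\mathbb{T}}g\|_{L^2}\leq C\|g\|_{\dot H^{-\gamma}}$ for
\[
\widetilde{\mathbb{T}}g(u):=|u|^{\gamma}\int_{S^2_+} b(\cos\theta)\,g\bigl((u\cdot\omega)\omega\bigr)\,d\Omega(\omega).
\]
The operators $\mathbb{T}$ and $\widetilde{\mathbb{T}}$ differ only in whether $g$ is evaluated at $x-(x\cdot\omega)\omega$ or at $(x\cdot\omega)\omega$, and share the same FIO structure; I would treat them in parallel below.

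\textbf{Single-scale reduction.} I would Littlewood--Paley decompose $h=\sum_{k\in\mathbb{Z}}\Delta_k h$ with $\widehat{\Delta_k h}$ supported in $|\xi|\sim 2^k$. A direct computation gives the homogeneity
\[
\mathbb{T}\bigl(h(\lambda\,\cdot\,)\bigr)(x)=\lambda^{-\gamma}(\mathbb{T}h)(\lambda x),\qquad\lambda>0,
\]
so the scale-$k$ target $\|\mathbb{T}\Delta_k h\|_{L^2}\leq C\,2^{-k\gamma}\|\Delta_k h\|_{L^2}$ is equivalent, via rescaling, to the single unweighted bound $\|\mathbb{T}h_0\|_{L^2}\leq C\|h_0\|_{L^2}$ whenever $\widehat{h_0}$ is supported in $|\xi|\sim 1$. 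Since $\|\Delta_k h\|_{L^2}\sim 2^{k\gamma}\|\Delta_k h\|_{\dot H^{-\gamma}}$, this is exactly the right gain.

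\textbf{FIO estimate at unit frequency.} Writing Fourier inversion inside the definition of $\mathbb{T}$ gives
\[
\mathbb{T}h_0(x)=(2\pi)^{-3}\int\!\!\int_{S^2_+} e^{i\psi(x,\xi;\omega)}\,a(x,\xi;\omega)\,\widehat{h_0}(\xi)\,d\Omega(\omega)\,d\xi,
\]
with phase $\psi(x,\xi;\omega)=(x-(x\cdot\omega)\omega)\cdot\xi$ and amplitude $a(x,\xi;\omega)=|x|^{\gamma}b(\cos\theta)\sin\theta$, which lies in the $SG^{\gamma,0}$ class introduced in the notations. A stationary-phase analysis in $\omega$ identifies the associated canonical relation with that of a generalized Radon transform over codimension-one affine planes, to which the Sogge--Stein theory grants $L^2$-regularization by $(n-1)/2=1$ derivative. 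The weight $|x|^{\gamma}$ with $\gamma\leq 1$ is absorbed into that smoothing through the $SG$-calculus, while the vanishing of $b(\cos\theta)\sin\theta$ at $\theta\in\{0,\pi/2\}$ is used to localize away from the set where non-degeneracy of $\psi$ fails.

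\textbf{Almost orthogonality and main obstacle.} To upgrade the per-scale bound to $\|\mathbb{T}h\|_{L^2}\leq C\|h\|_{\dot H^{-\gamma}}$, I would establish a Cotlar--Stein-type estimate
\[
\bigl|\langle \mathbb{T}\Delta_j h,\mathbb{T}\Delta_k h\rangle\bigr|\leq C\,2^{-N|j-k|}\,2^{-(j+k)\gamma}\,\|\Delta_j h\|_{L^2}\|\Delta_k h\|_{L^2}
\]
for some $N>0$, by analysing $\mathbb{T}^*\mathbb{T}$ within the FIO calculus and integrating by parts in $\omega$ off diagonal; Schur's lemma then sums the pieces, and the same argument applies to $\widetilde{\mathbb{T}}$. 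I expect the main obstacle to be the single-scale FIO estimate itself: non-degeneracy of $\psi$ in $\omega$ fails exactly at $\theta\in\{0,\pi/2\}$, so one must exploit the precise vanishing rates of $b(\cos\theta)\sin\theta$ at both endpoints in tandem with the $SG$-structure of the weight $|x|^{\gamma}$ in order to recover exactly $\gamma$ derivatives of smoothing, rather than the generic full derivative coming from the Radon structure, and then to propagate this delicate cancellation through the almost-orthogonality bound.
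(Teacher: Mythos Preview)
Your reductions are correct. For \eqref{E:T-v-ineq}, translation invariance of $\dot H^{-\gamma}$ indeed reduces to $\|\mathbb{T}h\|_{L^2}\le C\|h\|_{\dot H^{-\gamma}}$, which is exactly the paper's Lemma~\ref{L:T-main-1}. For \eqref{E:T-v_*-ineq} your substitution $u=v-v_*$ and the companion operator $\widetilde{\mathbb T}$ are valid, though the paper proceeds differently: it keeps the Fourier-integral representation \eqref{E:T-translation}, applies the stationary-phase expansion already obtained for $a(x,\xi)$ with $x=v-v_*$, and then observes that the resulting phase $\psi_\pm(v-v_*,\xi)$ and symbol $p_\pm(v-v_*,\xi)$ have identical structure whether one regards $v$ or $v_*$ as the integration variable (see \eqref{E:expression-1}, \eqref{E:expression-2}). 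This lets a single analysis cover both inequalities, whereas your route would require rerunning the full argument for $\widetilde{\mathbb T}$.

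The genuine gap is in the core bound $\|\mathbb{T}h\|_{L^2}\le C\|h\|_{\dot H^{-\gamma}}$ itself. Your Littlewood--Paley-in-$\xi$ plus homogeneity reduces to $\|\mathbb{T}h_0\|_{L^2}\le C\|h_0\|_{L^2}$ at unit frequency, but this does not simplify the problem: all scales of $|x|$ remain, and the amplitude $|x|^\gamma b(\cos\theta)$ is neither compactly supported in $x$ nor an $SG$ symbol near $x=0$. The Sogge--Stein theory you invoke yields a gain of one \emph{Sobolev} derivative for Radon transforms with compactly supported amplitudes; at unit output frequency that gain is vacuous, and it does not by itself compensate the unbounded spatial weight $|x|^\gamma$. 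Converting the Radon-type smoothing into $|x|$-decay is precisely the nontrivial content of the paper's Lemma~\ref{L:T-main}. There the key move is to absorb $|\xi|^{-\gamma}$ into the operator so that the symbol becomes the $x\leftrightarrow\xi$ symmetric quantity $a(x,\xi)=(|x||\xi|)^\gamma\int_{S^2_+}e^{-i(x\cdot\omega)(\xi\cdot\omega)}b(\cos\theta)\,d\Omega$; stationary phase in $\omega$ then produces principal terms of size $(|x||\xi|)^{\gamma-1}$, i.e.\ genuine $SG^{\gamma-1,\gamma-1}$ symbols on the region where $|x||\xi|$ is large and $x,\xi$ are transversal. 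The paper controls these FIOs by a \emph{joint} dyadic decomposition in $(x,\xi)$ together with a conical decomposition in the angle $\theta_0=\angle(x,\xi)$, applying a quantitative Cotlar--Stein argument (Lemma~\ref{lemma:Fourier_integral}) that tracks how the non-degeneracy constant $|1\mp\cos\theta_0|$, the symbol size, and the cone volume all shrink geometrically as $\theta_0\to 0,\pi$; the degenerate region where $|x||\xi|\sin^2\theta_0$ is bounded and the region $|x||\xi|\lesssim1$ are handled separately by direct size bounds and Schur's test. None of this structure is visible from a $\xi$-only Littlewood--Paley scheme, and your proposed off-diagonal estimate $|\langle\mathbb{T}\Delta_j h,\mathbb{T}\Delta_k h\rangle|\lesssim 2^{-N|j-k|}\cdots$ would itself require essentially the same phase-space analysis, since $\mathbb{T}$ does not preserve frequency localization.
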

Please note that the left hand side of ~\eqref{E:T-v-ineq} is an integration of
variable $v$ with parameter $v_*$ while the roles of $v$ and $v_*$ are switched 
in ~\eqref{E:T-v_*-ineq}. 
On the other hand an unified perspective is provided in the proof of  these two estimates
in section~\ref{Proof of lemmas} with the aid of the structural understanding of $\mathbb{T}$
in  Lemma~\ref{L:T-main} below.

To find the key estimates for the proof of ~\eqref{E:Gain-smooth-in}, note
that the estimates~\eqref{E:Gain-smooth-in} follow from 
\begin{equation}\label{E:Gain-in-c}
\|Q^{+}_\mathbbm{s}(f,g)\|_{{H}^{\gamma}}\leq C\|g\|_{L^p}\|f\|_{L^q},
\end{equation}
and 
\begin{equation}\label{E:Gain-in-oc}
\|Q^{+}_{\overline{\mathbbm{s}}}(f,g)\|_{{H}^{\gamma}}\leq C\|g\|_{L^p_\gamma}\|f\|_{L^q_\gamma}.
\end{equation}
We define $|x|^\gamma_\mathbbm{s}=\mathbbm{s}(|x|)|x|^\gamma$ and follow~\eqref{Def:T} to define 
\begin{equation}\label{Def:T-c}
\mathbb{T}_\mathbbm{s}h(x)=|x|^\gamma_\mathbbm{s}\int_{\omega\in S^2_+} b(\cos\theta)h(x-(x\cdot\omega)\omega) d\Omega(\omega),
\end{equation}
and
\begin{equation}\label{Def:H-s-oc}
H_{\overline{\mathbbm{s}}}(v,v_*)=\int_{S^2_+} \frac{1}{\lr{v}^\gamma\lr{v_*}^\gamma}B_{\overline{\mathbbm{s}}}
(v-v_*,\omega)h(v')d\Omega(\omega)
\end{equation}
whose structure is similar to that of
$\tau_{-v_*}\circ \mathbb{T}\circ\tau_{v_*}$. We will see soon that the estimates~\eqref{E:Gain-in-c} 
and~\eqref{E:Gain-in-oc} follow the Lemma below. 
\begin{lem}\label{L:H-v-v_*-ineq}
Let $\mathbb{T}_\mathbbm{s}$ and $H_{\overline{\mathbbm{s}}}$ be the defined by ~\eqref{Def:T-c} and~\eqref{Def:H-s-oc}. We have
\begin{equation}\label{E:T-c-v-ineq}  
\sup\limits_{v_*}\|(\tau_{-v_*}\circ \mathbb{T}_\mathbbm{s}\circ\tau_{v_*} )h(v)\|_{L^2(v)}\leq C \|h\|_{{H}^{-\gamma}},
\end{equation}
\begin{equation}\label{E:T-c-v_*-ineq} 
\sup\limits_{v}\|(\tau_{-v_*}\circ \mathbb{T}_\mathbbm{s}\circ\tau_{v_*} )h(v)\|_{L^2(v_*)}\leq C \|h\|_{{H}^{-\gamma}},
\end{equation}
\begin{equation}\label{E:H-v-ineq}  
\sup\limits_{v_*}\|H_{\overline{\mathbbm{s}}}(v,v_*)\|_{L^2(v)}\leq C \|h\|_{{H}^{-\gamma}},
\end{equation}
and
\begin{equation}\label{E:H-v_*-ineq} 
\sup\limits_{v}\|H_{\overline{\mathbbm{s}}}(v,v_*)\|_{L^2(v_*)}\leq C \|h\|_{{H}^{-\gamma}}.
\end{equation}
\end{lem}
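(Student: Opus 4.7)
The plan is to derive Lemma~\ref{L:H-v-v_*-ineq} from the homogeneous estimates of Lemma~\ref{L:T-v-v_*-ineq} via a Littlewood-Paley decomposition $h=h_L+h_H$ (with $\widehat{h_L}$ supported in $\{|\xi|\le 1\}$ and $\widehat{h_H}$ in $\{|\xi|>1\}$), combined with the structural identities $\mathbb{T}_\mathbbm{s}h(x)=\mathbbm{s}(|x|)\mathbb{T}h(x)$ and
\[
H_{\overline{\mathbbm{s}}}(v,v_*)=\lr{v}^{-\gamma}\lr{v_*}^{-\gamma}\,\overline{\mathbbm{s}}(|v-v_*|)\,(\tau_{-v_*}\circ\mathbb{T}\circ\tau_{v_*})h(v).
\]
On the high-frequency piece, $|\xi|^{-\gamma}$ and $\lr{\xi}^{-\gamma}$ are comparable, so $\|h_H\|_{\dot H^{-\gamma}}\le C\|h\|_{H^{-\gamma}}$; combined with the harmless pointwise factors $\mathbbm{s},\overline{\mathbbm{s}}\le 1$ and $\lr{v}^{-\gamma}\lr{v_*}^{-\gamma}\le 1$, Lemma~\ref{L:T-v-v_*-ineq} applied to $\mathbb{T}$ immediately handles the $h_H$ contribution to all four estimates. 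On the low-frequency piece, Cauchy-Schwarz in the Fourier variable supplies the inputs
\[
\|h_L\|_{L^\infty}\le\|\widehat{h_L}\|_{L^1}\le\|\chi_{|\xi|\le 1}\lr{\xi}^\gamma\|_{L^2}\|h\|_{H^{-\gamma}}\le C\|h\|_{H^{-\gamma}},\qquad \|h_L\|_{L^2}\le C\|h\|_{H^{-\gamma}},
\]
which reduce the remaining task to a physical-side analysis.

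For estimates~(\ref{E:T-c-v-ineq})-(\ref{E:T-c-v_*-ineq}) on $\mathbb{T}_\mathbbm{s}$, the compact support of $|x|^\gamma_\mathbbm{s}$ in $|x|\le 16$ makes the low-frequency step immediate: the pointwise bound
\[
|(\tau_{-v_*}\circ\mathbb{T}_\mathbbm{s}\circ\tau_{v_*})h_L(v)|\le C\,\mathbbm{s}(|v-v_*|)|v-v_*|^\gamma\,\|h_L\|_{L^\infty}
\]
together with the uniform $L^2$-integrability (in either $v$ or $v_*$) of the compactly supported weight $\mathbbm{s}(|v-v_*|)|v-v_*|^\gamma$ yields the desired bound, the $L^2(v_*)$ case following by the symmetric computation.

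For estimates~(\ref{E:H-v-ineq})-(\ref{E:H-v_*-ineq}) on $H_{\overline{\mathbbm{s}}}$, the low-frequency step is more delicate because $|x|^\gamma\overline{\mathbbm{s}}(|x|)$ is not compactly supported. The elementary inequality $|v-v_*|\le\lr{v}\lr{v_*}$ furnishes the uniform pointwise bound $\lr{v}^{-\gamma}\lr{v_*}^{-\gamma}|v-v_*|^\gamma\overline{\mathbbm{s}}(|v-v_*|)\le C$, reducing the problem to uniform $L^2(v)$ (or $L^2(v_*)$) control of the pure spherical-average operator $h\mapsto\int_{S^2_+}\cos\theta\,h(v')d\Omega$ acting on $h_L\in L^2$. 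The main obstacle I expect is precisely this step: because $v\mapsto v'=v-((v-v_*)\cdot\omega)\omega$ has 2-dimensional image for fixed $\omega,v_*$, the pure spherical average is not uniformly $L^2\to L^2$ bounded. I would instead refine the weight estimate by splitting the physical region into $\{|v-v_*|\lesssim\lr{v_*}\}$ (where the constraint acts like an effectively compact cutoff in $v$ about $v_*$, so that $\lr{v_*}^{-\gamma}$ combines with a ball-integrable factor in $v$) and $\{|v-v_*|\gtrsim\lr{v_*}\}$ (where $\lr{v}\sim|v-v_*|$ supplies genuine decay through $\lr{v}^{-\gamma}$), and close the estimate by a Schur-test argument that exploits both the $\cos\theta$ Jacobian vanishing at $\theta=\pi/2$ and the cutoff $\overline{\mathbbm{s}}$ to tame the spherical integration uniformly in the remaining parameter.
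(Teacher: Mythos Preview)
Your high-frequency argument and your treatment of the low-frequency part of $\mathbb{T}_\mathbbm{s}$ are correct, and in fact coincide with the paper's approach for those parts.

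The ``obstacle'' you identify for the low-frequency piece of $H_{\overline{\mathbbm{s}}}$, however, is not an obstacle at all. The spherical-average operator $h \mapsto \int_{S^2_+}\cos\theta\, h(v')\,d\Omega(\omega)$ \emph{is} uniformly $L^2(v)\to L^2$ and $L^2(v_*)\to L^2$ bounded: it is precisely $(\tau_{-v_*}\circ\mathbb{T}\circ\tau_{v_*})$ in the case $\gamma=0$, and Lemma~\ref{L:T-v-v_*-ineq} applied with $\gamma=0$ (where $\dot H^{0} = L^2$) delivers exactly the uniform bound you need. Your worry that ``for fixed $\omega$ the map $v\mapsto v'$ has $2$-dimensional image'' is correct but irrelevant, since the integration over $\omega\in S^2_+$ restores the missing regularity --- this is the very regularizing mechanism the whole paper is about. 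So your proposed physical-side splitting and Schur-test workaround are unnecessary: once you absorb the weight via $|v-v_*|^\gamma\le\lr v^\gamma\lr{v_*}^\gamma$, simply invoke the $\gamma=0$ instance of Lemma~\ref{L:T-v-v_*-ineq} on $h_L$ (using $\|h_L\|_{L^2}\le C\|h\|_{H^{-\gamma}}$) and you are done.

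For comparison, the paper handles $H_{\overline{\mathbbm{s}}}$ along a different route. Rather than a high/low split of $h$ reducing to the $\gamma=0$ case, it writes $H_{\overline{\mathbbm{s}}}$ through the Fourier symbol $\mathbb{A}_{\overline{\mathbbm{s}}}(v-v_*,\xi)$ and decomposes phase space with the cutoffs $\chi_A,\chi_{B,1},\chi_{C,1}$ from the proof of Lemma~\ref{L:T-main} (the pieces $\chi_{B,2},\chi_{C,2}$ drop out since $\overline{\mathbbm{s}}$ forces $|v-v_*|>4$). On each piece it checks that the extra factor $\lr{v}^{-\gamma}\lr{v_*}^{-\gamma}$ is harmless: on $\chi_A$ it does not spoil the FIO symbol estimates in the almost-orthogonality argument; on $\chi_{B,1}$ the change-of-variables reduction to region~I still goes through; and on $\chi_{C,1}$ (low $|\xi|$) the bounded quotient $|v-v_*|^\gamma_{\overline{\mathbbm{s}}}/(\lr v^\gamma\lr{v_*}^\gamma)$ together with the Schur test of Part~III suffices. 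Your approach, once the false obstacle is removed, is shorter and more self-contained; the paper's region-by-region approach is heavier but makes the frequency-localized structure explicit, which is what is later reused in the proof of Corollary~\ref{C:Q-L} (where the $\chi_A$ region actually gains a full derivative).
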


Assume Lemma~\ref{L:T-v-v_*-ineq} and Lemma~\ref{L:H-v-v_*-ineq} hold temporarily, whose proofs
are postponed to the Section~\ref{Proof of lemmas}, we can 
prove the Theorem~\ref{T:main}.
\begin{proof}[Proof of Theorem~\ref{T:main}]
First we consider the proof of the estimate~\eqref{E:Gain-smooth}. 
By the duality of homogeneous Sobolev spaces, see for example~\cite{BCD11}, we need to show that
\[
{\big |}\lr{Q^+(f,g),h}{\big |}\leq C \|f\|_{L^q}\|g\|_{L^p}\|h\|_{\dot{H}^{-\gamma}}
\]
holds for any $h\in \dot{H}^{-\gamma}$.
Using change of variables and H\"{o}lder inequality
\[
\begin{split}
&{\big |}\lr{Q^+(f,g),h}{\big |}\\
&={\big |}\iiint f(v)g(v_*)B(v-v_*,\omega)h(v')d\Omega(\omega) dv_* dv{\big |}\\
&\leq \|f(v)\|_{L^{q}}\|Q^{+t}_g(h)(v)\|_{L^{q'}}
\end{split}
\]
where $q'$ is the conjugate exponent of $q$ and 
\[
Q^{+,t}_g(h)(v)=\iint g(v_*)B(v-v_*,\omega)
h(v')d\Omega(\omega) dv_*.
\]
Thus it is reduced to proving that 
\begin{equation}\label{E:Q-adjoint-ineq1}
\|Q^{+,t}_g(h)(v)\|_{L^{q'}}\leq C\|g\|_{L^p}\|h\|_{\dot{H}^{-\gamma}}.
\end{equation}
where $1/2+1/q'=1/p,\;1\leq p,q\leq 2$. 

Defining  the translation operator
\begin{equation}\label{D:translation}
\tau_{m}h(\cdot)=h(\cdot+m),
\end{equation}
and following Lions~\cite{Lio94}, we rewrite 
\[
Q^{+,t}_{g}(h)(v)=\int g(v_*)(\tau_{-v_*}\circ \mathbb{T}\circ\tau_{v_*} )h(v) dv_*.
\]
Define  $H(v,v_*)=(\tau_{-v_*}\circ \mathbb{T}\circ\tau_{v_*})h(v)$. By $1/q'+(q'-2)/(2q')+(q'-p)/(pq')=1$
and H\"{o}lder inequality, we have 
\[
\begin{split}
&{\big |}Q^{+t}_g(h)(v){\big |}\leq \int {\big |}H(v,v_*){\big |}{\big |}g(v_*){\big |}dv_*\\
&=\int {\Big(} {\big |} H(v,v_*){\big |}^2{\big |}g(v_*){\big |}^p {\Big )}^{1/q'}{\big |}H(v,v_*){\big |}^{(q'-2)/q'}
{\big |}g(v_*){\big |}^{(q'-p)/q'} dv_*\\
&\leq \|( |H(v,v_*)|^2|g(v_*)|^p)^{1/q'}\|_{L^{q'}(v_*)}\times \\
&{\hskip 2cm}\||H(v,v_*)|^{(q'-2)/q'}\|_{L^{\frac{2q'}{q'-2}}(v_*)}
\||g(v_*)|^{(q'-p)/q'}\|_{L^{\frac{pq'}{q'-p}}(v_*)}.
\end{split}
\]
Here
\begin{equation}\label{E:G-v_*}
\||g(v_*)|^{(q'-p)/q'}\|_{L^{\frac{pq'}{q'-p}}(v_*)}=(\|g(v_*)\|_{L^p(v_*)})^{\frac{q'-p}{q'}},
\end{equation}
\begin{equation}\label{E:H-v-v_*-1}
\||H(v,v_*)|^{(q'-2)/q'}\|_{L^{\frac{2q'}{q'-2}}(v_*)}=(\|(\tau_{-v_*}\circ \mathbb{T}\circ\tau_{v_*} )h(v)\|_{L^2(v_*)})^{\frac{q'-2}{q'}},
\end{equation}
and 
\begin{equation}\label{E:HG-v-v_*}
\begin{split}
&\|( |H(v,v_*)|^2|g(v_*)|^p)^{1/q'}\|_{L^{q'}(v_*)}\\
&=(\int {\big |}g(v_*){\big |}^p {\big |}(\tau_{-v_*}\circ \mathbb{T}\circ\tau_{v_*} )h(v) {\big |}^2 dv_*)^{\frac{1}{q'}}.
\end{split}
\end{equation}
Applying~\eqref{E:T-v_*-ineq} of Lemma~\ref{L:T-v-v_*-ineq}, we 
have 
\begin{equation}\label{E:H-v-v_*-2}
~\eqref{E:H-v-v_*-1}\leq C (\|h\|_{\dot{H}^{-\gamma}})^{\frac{q'-2}{q'}}
\end{equation}
where $C$ is independent of $v$.
Combine~\eqref{E:G-v_*},~\eqref{E:H-v-v_*-2} and
~\eqref{E:HG-v-v_*}, we have
\begin{equation}\label{E:combine-1}
\begin{split}
&\|Q^{+,t}_g(h)(v)\|_{L^{q'}(v)}^{q'} \\
&\leq C (\|g(v_*)\|_{L^p(v_*)})^{(q'-p)}(\|h\|_{\dot{H}^{-\gamma}(v_*)})^{(q'-2)}\times\\
& {\hskip 2cm}\int\int {\big |}g(v_*){\big |}^p {\big |}(\tau_{-v_*}\circ \mathbb{T}\circ\tau_{v_*} )h(v) {\big |}^2 dv_* dv\\
&\leq C (\|g(v_*)\|_{L^p(v_*)})^{(q'-p)}(\|h\|_{\dot{H}^{-\gamma}(v_*)})^{(q'-2)}\times\\
& {\hskip 2cm} \int |g(v_*)|^p \sup\limits_{v_*}\|
(\tau_{-v_*}\circ \mathbb{T}\circ\tau_{v_*} )h(v)\|_{L^2_{v}}^2  dv_*. \\
\end{split}
\end{equation}
Then we conclude~\eqref{E:Q-adjoint-ineq1} by  applying ~\eqref{E:T-v-ineq} of 
Lemma~\ref{L:T-v-v_*-ineq} to the last term of ~\eqref{E:combine-1}. The proof of~\eqref{E:Gain-smooth}
is complete.

As we mentioned before the estimate~\eqref{E:Gain-smooth-in} comes from ~\eqref{E:Gain-in-c} and
~\eqref{E:Gain-in-oc}.
 Clearly the above argument also indicates that the estimate~\eqref{E:Gain-in-c} comes from  
~\eqref{E:T-c-v-ineq} and~\eqref{E:T-c-v_*-ineq}. 
Next we prove the estimate~\eqref{E:Gain-in-oc} is the consequence of~\eqref{E:H-v-ineq} and 
~\eqref{E:H-v_*-ineq}. 
By the duality, it is equivalent show that for any
$h\in H^{-\gamma}$ we have  
\[
{\big |}\lr{Q^+_{\overline{\mathbbm{s}}}(f,g),h}{\big |}\leq C \|f\|_{L^q_{\gamma}}\|g\|_{L^p_{\gamma}}\|h\|_{{H}^{-\gamma}},
\]
where $L^p_{\gamma}$ is weighted Lebesgue space, see notations in the end of first section.
Using change of variables and H\"{o}lder inequality,
\[
\begin{split}
&{\big |}\lr{Q^+_{\overline{\mathbbm{s}}}(f,g),h}{\big |}\\
&={\big |}\iiint \lr{v}^{\gamma}f(v)\lr{v_*}^{\gamma}g(v_*)\frac{1}{\lr{v}^{\gamma}\lr{v_*}^{\gamma}}B_{\overline{\mathbbm{s}}}(v-v_*,\omega)
h(v')d\Omega(\omega) dv_* dv{\big |}\\
&\leq \|f(v)\|_{L^{q}_{\gamma}}\|Q^{+,t}_{_{\overline{\mathbbm{s}}},g}(h)(v)\|_{L^{q'}}
\end{split}
\]
where $q'$ is the conjugate exponent of $q$ and 
\[
Q^{+,t}_{_{\overline{\mathbbm{s}}},g}(h)(v)=\iint \lr{v_*}^{\gamma}g(v_*)\frac{1}{\lr{v}^{\gamma}\lr{v_*}^{\gamma}}B_{\overline{\mathbbm{s}}}(v-v_*,\omega)
h(v')d\Omega(\omega) dv_*.
\]
It is reduced to proving that 
\begin{equation}\label{E:Q-adjoint-ineq1-in}
\|Q^{+,t}_{_{\overline{\mathbbm{s}}},g}(h)(v)\|_{L^{q'}}\leq C\|g\|_{L^p_{\gamma}}\|h\|_{{H}^{-\gamma}}
\end{equation}
where $1/2+1/q'=1/p,\;1\leq p,q\leq 2$. 
Recalling 
\[
H_{\overline{\mathbbm{s}}}(v,v_*)=\int_{S^2} \frac{1}{\lr{v}^{\gamma}\lr{v_*}^{\gamma}}B_{\overline{\mathbbm{s}}}(v-v_*,\omega)h(v')d\Omega(\omega),
\]
thus we rewrite 
\[
Q^{+,t}_{_{\overline{\mathbbm{s}}},g}(h)(v)=\int \lr{v_*}^{\gamma}g(v_*)H_{\overline{\mathbbm{s}}}(v,v_*) dv_*.
\]
Applying the H\"{o}lder inequality as the previous argument, we see that the result follows from
\[
\begin{split}
& \sup_{v_*}\|H_{\overline{\mathbbm{s}}}(v,v_*)\|_{L^2(v)}\leq C\|h\|_{H^{-\gamma}}\\
& \sup_{v}\|H_{\overline{\mathbbm{s}}}(v,v_*)\|_{L^2(v_*)}\leq C\|h\|_{H^{-\gamma}}\\
\end{split}
\]
which are the the last two estimates of Lemma~\ref{L:H-v-v_*-ineq}.
\end{proof}

The proof of Lemma~\ref{L:T-v-v_*-ineq} and Lemma~\ref{L:H-v-v_*-ineq} depend on the understanding of the 
operator $\mathbb{T}$ defined in~\eqref{Def:T}, while it needs 
a lot of effort.  
 For our purpose, we use inverse Fourier transform to rewrite
\begin{equation}\label{D:T-Fourier-1}
\begin{split}
\mathbb{T}h(x)
&=|x|^{\gamma}\int_{\omega\in S^2_+} b(\cos\theta)h(x-(x\cdot\omega)\omega)d\Omega(\omega)\\
&=(2\pi)^{-3}\int_{\mathbb{R}^3} e^{i{x\cdot\xi}} \mathbb{A}(x,\xi) \widehat{h}(\xi) d\xi
\end{split}
\end{equation}
where 
\[
\mathbb{A}(x,\xi)=|x|^{\gamma}\int_{\omega\in S^2_+} e^{-i(x\cdot\omega)(\xi\cdot\omega)} b(\cos\theta) d\Omega(\omega).
\]
The operator $\mathbb{T}$ originally defined on $S^2_+$ is turned to be an operator 
defined on $\mathbb{R}^3$ and whose property  is thus condensed in function 
$\mathbb{A}(x,\xi)$. The function $\mathbb{A}(x,\xi)$ is rather complicated and 
has differently properties on different portions of the phase space $(x,\xi)$ according to the scale of $x\cdot \xi$. 
However the following estimate holds.
\begin{lem}\label{L:T-main-1}
The operator $\mathbb{T}$ defined 
in~\eqref{D:T-Fourier-1} satisfies
\begin{equation}\label{E:T-main-1}
\|\mathbb{T}h\|_{L^2}\leq C \|h\|_{\dot{H}^{-\gamma}}.
\end{equation}
\end{lem}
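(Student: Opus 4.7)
The starting point is to recast \eqref{E:T-main-1} as an $L^2\to L^2$ bound for an oscillatory integral operator. Writing $\widehat{h}(\xi)=|\xi|^\gamma\widehat{g}(\xi)$ so that $\|g\|_{L^2}\sim\|h\|_{\dot H^{-\gamma}}$, formula \eqref{D:T-Fourier-1} becomes
\begin{equation*}
\mathbb{T}h(x)=(2\pi)^{-3}\int_{\mathbb{R}^3}e^{ix\cdot\xi}\,a(x,\xi)\,\widehat{g}(\xi)\,d\xi,\qquad a(x,\xi):=|\xi|^{\gamma}\,\mathbb{A}(x,\xi).
\end{equation*}
Hence \eqref{E:T-main-1} is equivalent to showing that the oscillatory integral operator with linear phase $x\cdot\xi$ and amplitude $a(x,\xi)$ is bounded on $L^2(\mathbb{R}^3)$, and the whole task reduces to understanding $a$ as a generalised symbol.

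The technical core is then to extract uniform and derivative estimates for $\mathbb{A}(x,\xi)$. I would apply stationary phase to the spherical integral
\begin{equation*}
\mathbb{A}(x,\xi)=|x|^\gamma\int_{S^2_+}e^{-i(x\cdot\omega)(\xi\cdot\omega)}b(\cos\theta)\,d\Omega(\omega)
\end{equation*}
in the hemispherical variable $\omega$. When $|x||\xi|$ is small the trivial bound $|\mathbb{A}(x,\xi)|\leq C|x|^\gamma$ applies; when $|x||\xi|$ is large, stationary phase together with the vanishing of the measure factor $b(\cos\theta)\sin\theta$ at $\theta=0$ and $\theta=\pi/2$ should deliver decay in $|x||\xi|$ with the sharp exponent $-\gamma$. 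The net outcome is $|a(x,\xi)|\leq C\min\{(|x||\xi|)^\gamma,1\}\leq C$, together with $SG$-type derivative bounds in the sense of \eqref{D:symbol-definition-SG}. This symbolic information is what I expect from the structural decomposition of $\mathbb{T}$ announced as Lemma~\ref{L:T-main}, which I would invoke as a black box here.

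To then conclude $L^2$ boundedness of the operator with amplitude $a$, I would decompose it dyadically in both position and frequency via the partition of unity \eqref{D:partition-of-unity-rho}, writing $\sum_{j,k\in\mathbb{Z}}S_{j,k}$ with $|x|\sim 2^j$ and $|\xi|\sim 2^k$ on the support of $S_{j,k}$. The Calder\'on--Vaillancourt theorem, or equivalently the $SG$-continuity results of Cordes and Coriasco cited in the paper, produce a norm bound of order $\min\{2^{(j+k)\gamma},1\}$ on each block; a Cotlar--Stein argument in both indices, with off-diagonal decay coming from integration by parts using the disjoint Fourier localisations, sums these blocks to a uniform estimate. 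The principal obstacle lies in the symbolic step: one must obtain sharp estimates on $\mathbb{A}(x,\xi)$ uniformly across all angular configurations of $(x,\xi)$ and all scales of $|x||\xi|$. The type of stationary point of the $\omega$-phase (interior vs.\ boundary of $S^2_+$, generic vs.\ degenerate) depends sensitively on $\angle(x,\xi)$, and reaching the sharp exponent $\gamma$ in place of the $\gamma-\varepsilon$ loss appearing in \eqref{E:JC12} requires careful bookkeeping of the boundary contributions that are absorbed by the vanishing of $b(\cos\theta)\sin\theta$.
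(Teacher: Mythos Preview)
Your reduction in the first paragraph is exactly what the paper does: Lemma~\ref{L:T-main-1} is immediately equivalent to the $L^2\to L^2$ boundedness of the operator $T$ with amplitude $a(x,\xi)=|\xi|^\gamma\mathbb{A}(x,\xi)$, which is precisely Lemma~\ref{L:T-main}. So far so good; the substance is in how you propose to prove Lemma~\ref{L:T-main}.

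There is a genuine gap in that plan. You expect stationary phase to yield the size bound $|a(x,\xi)|\leq C$ \emph{together with} $SG$-type derivative bounds in the sense of~\eqref{D:symbol-definition-SG}, and then to apply Calder\'on--Vaillancourt or $SG$ pseudodifferential continuity. The size bound is correct, but the derivative bounds fail, and this is the whole point. Stationary phase on the $\omega$-integral produces, on the main region, an expansion of the form
\[
a(x,\xi)=c_1\,e^{-\frac{i}{2}(x\cdot\xi+|x||\xi|)}\,p_+(x,\xi)+c_2\,e^{-\frac{i}{2}(x\cdot\xi-|x||\xi|)}\,p_-(x,\xi)+s(x,\xi),
\]
where $p_\pm\in SG^{\gamma-1,\gamma-1}$ are genuine symbols but the oscillatory factors are not: $\partial_{\xi_j}e^{-\frac{i}{2}(x\cdot\xi\pm|x||\xi|)}$ brings down a factor of size $|x|$, not $|\xi|^{-1}$. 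Hence $a$ itself is \emph{not} a symbol in any $S^m_{1,0}$ or $SG^{m_1,m_2}$ class, and neither Calder\'on--Vaillancourt nor the Cordes--Coriasco results apply to the operator with linear phase $x\cdot\xi$ and amplitude $a$. The operator $T$ is a sum of Fourier integral operators with phases $\psi_\pm(x,\xi)=\tfrac12(x\cdot\xi\mp|x||\xi|)$, and one must use the non-degeneracy of $\det\partial_x\partial_\xi\psi_\pm=2^{-3}|1\mp\cos\theta_0|$ rather than symbol calculus.

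This is exactly why the paper needs the custom FIO lemma (Lemma~\ref{lemma:Fourier_integral}) based on Cotlar--Stein, together with a dyadic decomposition in the \emph{angle} $\theta_0$ between $x$ and $\xi$: the non-degeneracy constant degenerates as $\theta_0\to 0$ or $\pi$, so each angular shell $\Gamma_z$ must be estimated separately and the resulting norms summed geometrically. There is also a region where $|x||\xi|\sin^2\theta_0$ is bounded, on which stationary phase degenerates entirely and the paper falls back on a direct kernel bound and the Schur test. Your sketch acknowledges that the angular configuration matters, but the remedy you propose---symbol estimates plus Calder\'on--Vaillancourt---is not available; you need the FIO route.
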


As we mentioned in the paragraph after Theorem~\ref{T:main}, the geometric argument in~\cite{JC12} can only handle the 
case when $|x|$ in~\eqref{D:T-Fourier-1} being restricted to a compact set, though $b(\cos\theta)$ can be full range.  
With the compactness assumption, $|x|$ is basically regarded as a constant in~\cite{Lio94,JC12}. 
The key observation here is that the operator $\mathbb{T}$ trades $x$ for $\xi$ and thus it is more natural to consider the 
operator, say $T$, with kernel
\[
|x|^{\gamma}|\xi|^{\gamma}\int_{\omega\in S^2_+} e^{-i(x\cdot\omega)(\xi\cdot\omega)} b(\cos\theta) d\Omega(\omega)
\]
and prove a $L^2$ to $L^2$ estimate as we will do later.  Please note the symmetry of $x$ and $\xi$ in this kernel, thus 
the appearance of SG symbol (see the notation section) is natural and which can help us to sum up the dyadic partitions. 
On the other hand, we will see that the operator $\mathbb{T}$ behaves like a Fourier integral operator (FIO) only on some 
portion of phase space $(x,\xi)$ while its non-degeneracy condition is not uniformly. Also on small portion of phase space, 
$\mathbb{T}$ degenerate and the concept of symbol does not apply. Hence the standard theory of FIO does not work here.
To deal this complexity,  we  integrate  the  geometric argument in~\cite{JC12} with the new idea here to a rather 
different argument which can fit all scale of $x\cdot\xi$.

We remark that  in the study of the non-cutoff Boltzmann collision operator,   Alexandre and Villani~\cite{AV02} 
considered a parallel operator of the from
\begin{equation}\label{D:psd}
|x|^{\alpha} \int_{\omega\in S^2} (\cos\theta)^{-(1+2s)} [h(x-(x\cdot\omega)\omega))-h(x)] d\Omega(\omega), 0<s<2.
\end{equation}
Unlike the operator~\eqref{D:T-Fourier-1}, the main part of the~\eqref{D:psd}  is the pseudodifferential operator, see 
for example~\cite{Ale09}. The mechanisms in~\eqref{D:T-Fourier-1} and ~\eqref{D:psd} are also different. 
The angular singularity of~\eqref{D:psd} demands loss of regularity and weight, see for example the estimates 
in~\cite{AMUXY09,He16,JL19}.

Before go the the proof of Lemma~\ref{L:T-main-1},  
we introduce a preparation lemma which employs the almost orthogonality argument, see for example~\cite{Ste93}, 
to show the $L^2$ boundedness of a particular FIO induced by $\mathbb{A}(x,\xi)$.

We consider the cone 
defined in the phase space $(x,\xi)\in\{\mathbb{R}^3-\{0\}\}\times\{\mathbb{R}^3-\{0\}\}$ by
\begin{equation}\label{D:cone-product}
\begin{split}
& \{(x,\xi)|\; \frac{\pi}{8}<\arccos(\frac{x\cdot\xi}{|x||\xi|}) <\pi-\frac{\pi}{8} \} \\
&\eqdef\;\Gamma_x\times\Gamma_{\xi}.
\end{split}
\end{equation}
For each fixed $x$,  we use $x\times \Gamma_{\xi}$ to denote the cone whose element $\xi\in\mathbb{R}^3$ 
satisfies~\eqref{D:cone-product}. The notation $\Gamma_{\xi}$ means the cone $x\times\Gamma_{\xi}$ 
where the vector $x$ is not specified. Also $\Gamma_{x}\times x,\Gamma_x$ are defined likewise.  
To estimate the FIO whose amplitude function is defined on $\Gamma_x\times\Gamma_{\xi}$, we need a dyadic
partition of unity on $\mathbb{R}^3-\{0\}$. 
For $x\in\mathbb{R}^3$ and $k\in\mathbb{Z}$ we define  $\chi_{k}(x)=\rho(2^{-k}|x|)$ where $\rho$ is defined 
in~\eqref{D:partition-of-unity-rho}, then we have dyadic partition of unity
\begin{equation}\label{D:chi-k}
1=\sum_{k\in \mathbb{Z}} \chi_{k}(x) ,\; x\neq  0.
\end{equation}
We also need the definition
\begin{equation}\label{E:Q}
\mathcal{Q}=\max \{ \int_{\Gamma_{x}}\chi_{0}(x)dx\;,\; 
\int_{ \Gamma_{x}}\chi_{0}(x)\chi_{1}(x)dx \}.
\end{equation}
The lemma we need is the following.
\begin{lem}\label{lemma:Fourier_integral}
Let $F$ be  defined by
\begin{equation}\label{D:T-RS-lemma}
Fu(x)=\int_{{\mathbb R}^3}e^{i\psi(x,\xi)} p(x,\xi)\;u(\xi)d\xi
\end{equation} 
where $\psi(x,\xi)$ is homogeneous of degree $1$ in $x$ and $\xi$, $p(x,\xi)\in C^{\infty}({\mathbb R}^3_x
\times{\mathbb R}^3_{\xi})$ and ${\rm supp}\; p(x,\xi)\subset\Gamma_x\times\Gamma_\xi\cap\{|x|>8,|\xi|>8\}$
satisfies
\begin{equation}\label{E:symbol-derivative-bound}
|\partial^{\alpha}_x\partial^{\beta}_{\xi} p(x,\xi)|\leq C_{\alpha\beta}|x|^{-|\alpha|}|\xi|^{-|\beta|},
\end{equation}
for $|\alpha|,|\beta|\leq 4$. Let 
\begin{equation}\label{D:P}
\mathcal{P}=\sup\limits_{|\alpha|,|\beta|\leq 4}\|\partial^{\alpha}_x\partial^{\beta}_{\xi}  p(x,\xi)\|_{L^{\infty}
(\mathbb{R}^3_x\times\mathbb{R}^3_{\xi})}.
\end{equation}
When $(x,\xi)\in {\rm supp}\;p(x,\xi)$, the phase function $\psi(x,\xi)$ satisfies 
\begin{equation}\label{E:non_degenerate0}
0<C_1<|\det {\Big [} \partial_{x}\partial_{\xi} \psi(x,\xi) {\Big ]}|<C_2
\end{equation}  and
\begin{equation}\label{E:phase-upper-bound}
|\partial^{\alpha}_x\partial_{\xi}\psi(x,\xi)|\leq C_{\alpha}\;,\;|\partial_x\partial^{\beta}_{\xi}\psi(x,\xi)|
\leq C_{\beta}\;,\;1\leq|\alpha|,|\beta|\leq 5 %
\end{equation}
Then $F$ is $L^2$ bounded and satisfies 
\begin{equation}\label{E:L-2-bound}
\|F\|_{L^2\rightarrow L^2}\leq \mathcal{C} \mathcal{Q}\mathcal{P}
\end{equation}
where $\mathcal{C}$ depends on the constants in~\eqref{E:non_degenerate0} and~\eqref{E:phase-upper-bound} and $\mathcal{Q}$
is defined in~\eqref{E:Q}.
\end{lem}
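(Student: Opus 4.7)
My plan is to prove Lemma~\ref{lemma:Fourier_integral} via the Cotlar--Stein almost orthogonality principle (see~\cite{Ste93}), applied to a dyadic decomposition of $p$ in both the $x$- and $\xi$-variables. Using~\eqref{D:chi-k}, write $p(x,\xi)=\sum_{j,k}p_{j,k}(x,\xi)$ with $p_{j,k}(x,\xi)=p(x,\xi)\chi_j(x)\chi_k(\xi)$, and let $F_{j,k}$ be the FIO obtained by replacing $p$ with $p_{j,k}$ in~\eqref{D:T-RS-lemma}; the support condition $|x|,|\xi|>8$ forces $j,k\geq 2$. The goal is to bound each $F_{j,k}$ uniformly in $(j,k)$ and to show that the off-diagonal products $F_{j,k}F_{j',k'}^*$ and $F_{j,k}^*F_{j',k'}$ decay rapidly.

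For the individual estimate I would use the $T^*T$ method. Rescaling $x=2^jy$, $\xi=2^k\zeta$, $\eta=2^k\tilde\eta$, the bi-homogeneity gives $\psi(2^jy,2^k\zeta)=2^{j+k}\psi(y,\zeta)$, so the kernel of $F_{j,k}^*F_{j,k}$ becomes a unit-scale oscillatory integral of phase $\lambda[\psi(y,\tilde\eta)-\psi(y,\zeta)]$ with $\lambda=2^{j+k}$ and amplitude whose $y$-derivatives are uniformly bounded by $C\mathcal{P}^2$ (the decay rate in~\eqref{E:symbol-derivative-bound} exactly matches the derivative cost of $\chi_j,\chi_k$). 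The non-degeneracy~\eqref{E:non_degenerate0} yields $|\nabla_y(\psi(y,\tilde\eta)-\psi(y,\zeta))|\gtrsim|\tilde\eta-\zeta|$, and $N$-fold integration by parts in $y$ together with Schur's test---where the restriction $y\in\Gamma_x$ on the $y$-support contributes the factor $\mathcal{Q}$ via~\eqref{E:Q}---gives $\|F_{j,k}\|_{L^2\to L^2}\leq C\mathcal{Q}\mathcal{P}$ uniformly in $(j,k)$.

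For the off-diagonal estimates, $F_{j,k}F_{j',k'}^*$ vanishes whenever $|k-k'|\geq 2$ because its $\xi$-support is contained in $\{|\xi|\sim 2^k\}\cap\{|\xi|\sim 2^{k'}\}=\emptyset$; for $|k-k'|\leq 1$ and $|j-j'|\geq 2$, integration by parts in $\xi$ in the kernel produces rapid decay, since by~\eqref{E:non_degenerate0} combined with the cone condition one has $|\nabla_\xi\psi(x,\xi)-\nabla_\xi\psi(y,\xi)|\gtrsim|x-y|\sim 2^{\max(j,j')}$, while each $\xi$-derivative of the amplitude costs only $2^{-k}$. The symmetric estimate for $F_{j,k}^*F_{j',k'}$---vanishing for $|j-j'|\geq 2$ and decaying in $|k-k'|$ by integration by parts in $x$---follows with the roles of $x$ and $\xi$ interchanged, using~\eqref{E:phase-upper-bound}. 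Cotlar--Stein then gives
\[
\|F\|_{L^2\to L^2}\leq\sup_{(j,k)}\sum_{(j',k')}\bigl(\|F_{j,k}F_{j',k'}^*\|^{1/2}+\|F_{j,k}^*F_{j',k'}\|^{1/2}\bigr),
\]
and the rapid off-diagonal decay makes this sum bounded by $C\mathcal{Q}\mathcal{P}$, yielding~\eqref{E:L-2-bound}.

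The main obstacle will be the off-diagonal step: establishing the sharp lower bound $|\nabla_\xi\psi(x,\xi)-\nabla_\xi\psi(y,\xi)|\gtrsim|x-y|$ on the full double cone using only the local non-degeneracy~\eqref{E:non_degenerate0} and the upper bounds~\eqref{E:phase-upper-bound}---the cone structure $\Gamma_x$ is essential, since otherwise $x$ and $y$ could be nearly parallel with very different lengths---and then tracking the derivative counts in $N$-fold integration by parts so that the cone measure enters exactly as $\mathcal{Q}$, not as a higher power, in the final constant. This bookkeeping is what pins down the precise form~\eqref{E:L-2-bound} of the bound.
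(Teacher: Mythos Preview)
Your proposal is correct and follows essentially the same route as the paper: a dyadic decomposition $F=\sum F_{(j,l)}$ in both $x$ and $\xi$, Cotlar--Stein, with $F_{(j,l)}F^*_{(k,m)}=0$ for $|l-m|\geq 2$ by $\xi$-support disjointness and decay in $|j-k|$ via integration by parts in $\xi$ using the non-degeneracy lower bound $|\partial_\xi\psi(x,\xi)-\partial_\xi\psi(y,\xi)|\gtrsim|x-y|$ (the paper cites \cite{Ste93} p.~397 for this step), then the symmetric estimate for $F^*F$, and Schur's test picking up the cone factor $\mathcal{Q}$. The only organizational differences are that the paper handles the near-diagonal case $|j-k|<3$ by splitting according to whether $|\tilde x-\tilde y|\gtrless(\tau_1\tau_2)^{-1}$ rather than by a separate $T^*T$ argument, and it tracks $\mathcal{Q}$ through both the $\xi$-integral and the $x$-integral in the Schur bound to obtain $\mathcal{Q}^2$ in $\|FF^*\|$, hence $\mathcal{Q}$ in $\|F\|$.
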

\begin{proof}
Note $\sum_{k=0}^{\infty}\chi_{k}(z)=1$  when $|z|> 8$. Therefore we can decompose $F$ as
\[
F=\sum_{(j,l)\in\mathbb{N}\cup\{0\}\times\mathbb{N}\cup\{0\}} F_{(j,l)}
\]
where 
\[
F_{(j,l)}u(x)=\chi_{j}(x)\int_{{\mathbb R}^3}e^{i\psi(x,\xi)} \chi_{l}(\xi) p(x,\xi)u(\xi)d\xi
\]
The adjoint of $F_{j,l}$, denoted by $F^{*}_{j,l}$ is
\[
F^{*}_{(j,l)}v(\xi)=\chi_l(\xi)\int e^{-i\psi(y,\xi)}\chi_j(y)\overline{p(y,\xi)}v(y)dy.
\]
Then we have
$$F_{(j,l)}F^{*}_{(k,m)}u(x)=\int K_{(j,l),(k,m)}(x,y)u(y)dy,$$
where
\begin{equation}\label{E:K}
\begin{split}
& K_{(j,l),(k,m)}(x,y)\\
&=\chi_j(x)\chi_k(y)\int e^{i(\psi(x,\xi)-\psi(y,\xi))}\chi_l(\xi)\chi_m(\xi)p(x,\xi)\overline{p(y,\xi)}
d\xi.
\end{split}
\end{equation}
Since ${\rm supp}\;\chi_l(\xi)\cap{\rm supp}\;\chi_m(\xi)=\emptyset$ when $|l-m|\geq 2$, we only have to consider $l=m-1,m$ or $m+1$. 
Without loss of generality, we assume $l=m+1$ and $j\geq k$. 
First we consider the sub-case $j\geq k+3$.
Let 
\[
\ti{x}=2^{-k}x=\tau^{-1}_1 x, \ti{y}=2^{-k}y=\tau^{-1}_1 y, \ti{\xi}=
2^{-m}\xi=\tau_2^{-1}\xi.
\]
 Since $\psi$ is homogeneous of degree $1$ in first and second variables, we have
\[
\begin{split}
&K_{(j,l)(k,m)}(x,y)\\
&=\tau_2^{3}\chi_{j-k}(\ti{x})\chi_0(\ti{y})\int e^{i\tau_1\tau_2(\psi(\ti{x},\ti{\xi})-\psi(\ti{y},\ti{\xi}))}\chi_0(\ti{\xi})
\chi_1(\ti{\xi})p(\tau_1\ti{x},\tau_2\ti{\xi})\overline{p(\tau_1\ti{y},\tau_2\ti{\xi})}d\ti{\xi} \\
&\eqdef\tau_2^{3} \;\mathcal{K}_{(j,l)(k,m)}(\ti{x},\ti{y}).
\end{split}
\]
Hence 
\begin{equation}\label{D:F-j-l-k-m}
F_{(j,l)}F^{*}_{(k,m)} u(\tau_1\ti{x}) =\int \tau_2^{3}\mathcal{K}_{(j,l)(k,m)}(\ti{x},\ti{y}) u(\tau_1 \ti{y})
 \tau_1^{3} d\ti{y}.
\end{equation}
Define the operator 
\begin{equation}\label{E:L-change}
L_{\tau_1\tau_2}=\frac{1}{i\tau_1\tau_2}\frac{\partial_{\ti{\xi}}\psi(\ti{x},\ti{\xi})-\partial_{\ti{\xi}}\psi(\ti{y},\ti{\xi})}
{|\partial_{\ti{\xi}}\psi(\ti{x},{\ti{\xi}})-\partial_{\ti{\xi}}\psi(\ti{y},{\ti{\xi}})|^2}\cdot\partial_{\ti{\xi}}
\end{equation}
and observe that
$$L_{\tau_1\tau_2}(e^{{i}{\tau_1\tau_2}(\partial_{\ti{\xi}}\psi(\ti{x},\ti{\xi})-\partial_{\ti{\xi}}\psi(\ti{y},\ti{\xi}))})
=e^{{i}{\tau_1\tau_2}(\partial_{\ti{\xi}}\psi(\ti{x},\ti{\xi})-\partial_{\ti{\xi}}\psi(\ti{y},\ti{\xi})}.$$
Integration by parts yields
$$
\begin{aligned}
&\int e^{i\tau_1\tau_2(\psi(\ti{x},\ti{\xi})-\psi(\ti{y},\ti{\xi}))}\chi_0(\ti{\xi})
\chi_1(\ti{\xi})p(\tau_1\ti{x},\tau_2\ti{\xi})\overline{p(\tau_1\ti{y},\tau_2\ti{\xi})}d\ti{\xi}\\
&=\int e^{i\tau_1\tau_2(\psi(\ti{x},\ti{\xi})-\psi(\ti{y},\ti{\xi}))}(L^*_{\tau_1\tau_2})^4 (\chi_0(\ti{\xi})
\chi_1(\ti{\xi})p(\tau_1\ti{x},\tau_2\ti{\xi})\overline{p(\tau_1\ti{y},\tau_2\ti{\xi})})d\ti{\xi},
\end{aligned}
$$
where $L_{\tau_1\tau_2}^*$ is the transpose of $L_{\tau_1\tau_2}$.
Since $p(x,\xi)$ satisfies~\eqref{E:symbol-derivative-bound}, we have
\begin{equation}\label{D:P-1}
\sup_{|\alpha|,|\beta|\leq 4}\|\partial^{\alpha}_{\ti{x}}\partial^{\beta}_{\ti{\xi}}  p(\tau_1\ti{x},\tau_2\ti{\xi}
)\|_{L^{\infty}(\mathbb{R}^3_{\ti{x}}\times\mathbb{R}^3_{\ti{\xi}})}\leq \mathcal{P}.
\end{equation}
From the assumption~\eqref{E:non_degenerate0} we obtain
\begin{equation}\label{E:non_degenerate1}
|\partial_{\ti{\xi}}\psi(\ti{x},\ti{\xi})-\partial_{\ti{\xi}}\psi(\ti{y},\ti{\xi})|\geq C|\ti{x}-\ti{y}|
\end{equation}
($C$ depends on $C_1$, see~\cite{Ste93} P.397 for obtaining~\eqref{E:non_degenerate1} from ~\eqref{E:non_degenerate0})
and
\[
|\partial^{\beta}_{\ti{\xi}}\psi(\ti{x},\ti{\xi})-\partial^{\beta}_{\ti{\xi}}\psi(\ti{y},\ti{\xi})|\leq C_{\beta}|\ti{x}-\ti{y}|
\]
for $1\leq|\beta|\leq 5$ since phase function $\psi$ is homogeneous of degree $1$ in first variable. Note $\ti{x}\in\;{\rm supp}\chi_{j-k},\ti{y}\in\;{\rm supp}\chi_{0}$ and 
$j\geq k+3$. Hence we have
\begin{equation}\label{E:decay_kernel}
\begin{split}
|\mathcal{K}_{(j,l)(k,m)}(\ti{x},\ti{y})| \leq & C(\tau_1\tau_2)^{-4} \mathcal{P}^2\;\frac{\chi_{j-k}(\ti{x})
\chi_0(\ti{y})}{1+|\ti{x}-\ti{y}|^4}\\
&{\hskip 2cm }\times\int_{\mathbb{R}^3}\chi_{0}|_{y\times\Gamma_{\xi}}(\ti{\xi})\chi_{1}|_{x\times\Gamma_{\xi}}(\ti{\xi})d\ti{\xi} 
\end{split}
\end{equation}
where $C$ depends on the constants of~\eqref{E:non_degenerate0} and~\eqref{E:phase-upper-bound} 
and $\chi_{0}|_{y\times\Gamma_{\xi}},\chi_{1}|_{x\times\Gamma_{\xi}}$ are functions $\chi_0(\xi),\chi_1(\xi)$ restricted 
to cones $\Gamma_{\xi}$ determined by $x,y$ respectively. 
We note that 
\begin{equation}\label{E:cone-span}
\int_{\mathbb{R}^3}\chi_{0}|_{y\times\Gamma_{\xi}}(\ti{\xi})\chi_{1}|_{x\times\Gamma_{\xi}}(\ti{\xi})d\ti{\xi} 
\end{equation}
attains its maximum when $x=cy$ for $c>0$. The
maximum of~\eqref{E:cone-span} can be written as
\[
\int_{x\times \Gamma_{\ti{\xi}}}\chi_{0}(\ti{\xi})\chi_{1}(\ti{\xi})d\ti{\xi} 
\]
and note that its value  depends on the the span of cone $\Gamma_{\xi}$. The corresponding part for the
case $l=m-1$ clearly has the same maximum as above.    
For $l=m$, the corresponding maximum will be 
\[
\int_{x\times \Gamma_{\ti{\xi}}}\chi_{0}(\ti{\xi})\chi_{0}(\ti{\xi})d\ti{\xi}. 
\]
By definition of $\Gamma_x\times\Gamma_{\xi}$ and the fact $0\leq \chi_{0}(\cdot)\leq 1$,
\[
\max \{ \int_{x\times \Gamma_{\ti{\xi}}}\chi_{0}(\ti{\xi})\chi_{0}(\ti{\xi})d\ti{\xi}\;,\; 
\int_{x\times \Gamma_{\ti{\xi}}}\chi_{0}(\ti{\xi})\chi_{1}(\ti{\xi})d\ti{\xi} \}\leq \mathcal{Q}.
\]
We also note that~\eqref{E:cone-span} non-vanishes only when the angle spanned by $x,y$
is in a suitable range determined by the definition~\eqref{D:cone-product}.  
From above and $\ti{x}\in {\rm supp}\;\chi_{j-k}\;,\;\ti{y}\in {\rm supp}\;\chi_0$, we have 
\begin{equation}\label{E:mathcal-K-bound}
\begin{split}
& \sup_{\ti{x}}\int|\tau_2^{3} \mathcal{K}_{(j,l)(k,m)}(\ti{x},\ti{y})\tau_1^{3}|d\ti{y} \leq C 2^{-(4j-3k)}2^{-m} \mathcal{Q} \mathcal{P}^2
\int_{ \Gamma_{\ti{y}}} \chi_0(\ti{y})d\ti{y} \\
&{\hskip 5.1cm}\leq C 2^{-j}2^{-m} \mathcal{Q}^2 \mathcal{P}^2\\
&{\hskip 5.1cm}= C 2^{-(j-k)}2^{-k}2^{-m} \mathcal{Q}^2 \mathcal{P}^2,\\
& \sup_{\ti{y}}\int|\tau_2^{3} \mathcal{K}_{(j,l)(k,m)}(\ti{x},\ti{y})\tau_1^{3}|d\ti{x} \leq C 2^{-(4j-3k)}2^{-m} \mathcal{Q} \mathcal{P}^2
\int_{ \Gamma_{\ti{x}}} \chi_{j-k}(\ti{x})d\ti{x}\\
&{\hskip 5.1cm}=C 2^{-j}2^{-m} \mathcal{Q} \mathcal{P}^2\int_{ \Gamma_{x}} \chi_{0}(x)dx\\
&{\hskip 5.1cm}=C 2^{-(j-k)}2^{-k}2^{-m} \mathcal{Q}^2\mathcal{P}^2.
\end{split}
\end{equation}
Let $H(z):\mathbb{Z}\rightarrow \{0,1\}$ be defined by $H(z)=1$ if $|z|\leq 1$ and $H(z)=0$ if $|z|>1$. 
For the general $(j,l),(k,m)$ with $|j-k|\geq 3$, the $2^{-(j-k)}2^{-k}2^{-m}$
in the right hand side of~\eqref{E:mathcal-K-bound} should 
be replaced by 
\[
2^{-|j-k|} H(l-m) 2^{-\min\{j,k\}}2^{-\min\{l,m\}}.
\] 
We note that $2^{-\min\{j,k\}}2^{-\min\{l,m\}}\leq 1$. 
By invoking Schur test lemma (lemma~\ref{lemma:Schur_test}), there exists a constant $C$ such that
\begin{equation}\label{E:x-distance}
\|F_{(j,l)}F^{*}_{(k,m)}\|_{L^2\rightarrow L^2}\leq C 2^{-|j-k|} H(l-m) \mathcal{Q}^2 \mathcal{P}^2.
\end{equation}
Applying the same argument to the case $|l-m|\geq 3$, we have 
\begin{equation}\label{E:xi-distance}
\|F^{*}_{(j,l)}F_{(k,m)}\|_{L^2\rightarrow L^2}\leq C 2^{-|l-m|} H(j-k) \mathcal{Q}^2 \mathcal{P}^2.
\end{equation}
Next we prove that~\eqref{E:x-distance} and~\eqref{E:xi-distance} also hold for $|j-k|<3$ 
and $|l-m|<3$ respectively. By symmetry it suffices to prove one of them. Thus we assume  
$k\leq j<k+3$ and $l=m+1$ and it remains to prove  
\begin{equation}\label{E:x-distance-unit}
\|F_{(j,l)}F^{*}_{(k,m)}\|_{L^2\rightarrow L^2}\leq C  \mathcal{Q}^2 \mathcal{P}^2.
\end{equation}
When $|\ti{x}-\ti{y}|\geq (2\tau_1\tau_2)^{-1}$, similar to~\eqref{E:decay_kernel}, 
we can derive
\begin{equation}\label{E:decay_kernel-2}
\begin{split}
 |\mathcal{K}_{(j,l)(k,m)}(\ti{x},\ti{y})|\leq & C(\tau_1\tau_2)^{-4} \mathcal{P}^2\frac{\chi_{j-k}(\ti{x})\chi_0(\ti{y})}{|\ti{x}-\ti{y}|^4}\\
&{\hskip 2cm} \times\int_{\mathbb{R}^3}\chi_{0}|_{y\times\Gamma_{\xi}}(\ti{\xi})\chi_{1}|_{x\times\Gamma_{\xi}}(\ti{\xi})d\ti{\xi}. 
\end{split}
\end{equation}
Applying~\eqref{E:decay_kernel-2} to the the left hand side of~\eqref{E:mathcal-K-bound} and estimating the integrals by 
considering the region $|\ti{x}-\ti{y}|\approx (2^n\tau_1\tau_2)^{-1}$ for each $n\in\mathbb{N}$, we 
conclude that they have the upper bound $C \mathcal{Q}^2 \mathcal{P}^2$. When 
$|\ti{x}-\ti{y}|<  (2\tau_1\tau_2)^{-1}$, direct estimate gives
\begin{equation}\label{E:decay_kernel-3}
|\mathcal{K}_{(j,l)(k,m)}(\ti{x},\ti{y})|\leq C \mathcal{P}^2 {\chi_{j-k}(\ti{x})\chi_0(\ti{y})}
\cdot\int_{\mathbb{R}^3}\chi_{0}|_{y\times\Gamma_{\xi}}(\ti{\xi})\chi_{1}|_{x\times\Gamma_{\xi}}(\ti{\xi})d\ti{\xi}. 
\end{equation} 
Applying~\eqref{E:decay_kernel-3} to the the left hand side of~\eqref{E:mathcal-K-bound}
and considering the support of $\ti{x}$ and $\ti{y}$, we conclude~\eqref{E:x-distance-unit}.

Now we have 
\[
\|F_{(j,l)}F^{*}_{(k,m)}\|_{L^2\rightarrow L^2},\|F^{*}_{(j,l)}F_{(k,m)}\|_{L^2\rightarrow L^2} \leq C
\mathcal{Q}^2 \mathcal{P}^2 \{\Theta(j-k,l-m) \}^2,
\] 
where 
\[
\Theta(j_1,j_2)=\sqrt{\frac{H(j_2)}{2^{j_1}}+\frac{H(j_1)}{2^{j_2}}}
\]
Since 
\[
\sum_{(j_1,j_2)\in\mathbb{N}\cup\{0\}\times \mathbb{N}\cup\{0\}} \Theta(j_1,j_2) <\infty,
\]
we can conclude the result by Cotlar-Stein lemma (Lemma~\ref{lemma:Coltar's_lemma}).

\end{proof}

\section{Proof of Lemma~\ref{L:T-main-1}}

First we reduce the estimate~\eqref{E:T-main-1} to a $L^2$ to $L^2$ estimate which is more
natural and simply.  Recalling~\eqref{D:T-Fourier-1},  we define
\begin{equation}\label{D:a-x-xi}
 a(x,\xi)\eqdef \mathbb{A}(x,\xi)|\xi|^{\gamma}=(|x||\xi|)^{^{\gamma}}
 \int_{\omega\in S^2_+} e^{-i(x\cdot\omega)(\xi\cdot\omega)} b(\cos\theta) d\Omega(\omega)
\end{equation}
and 
\begin{equation}\label{D:T-Fourier}
Th(x)=\int_{\mathbb{R}^3} e^{i{x\cdot\xi}} a(x,\xi) \widehat{h}(\xi) d\xi.
\end{equation}
Recall 
\[
D^{-\gamma} f(x)=(2\pi)^{-3}\int_{\mathbb{R}^3} e^{i{x\cdot\xi}} |\xi|^{-\gamma}\widehat{f}(\xi)d\xi.
\]
From $\mathbb{T} h(x)=(2\pi)^{-3} T(D^{-\gamma} h)(x)$, we see that
the Lemma~\ref{L:T-main-1} is equivalent to 
\begin{lem}\label{L:T-main}
The operator $T$ defined in~~\eqref{D:T-Fourier} satisfies
\begin{equation}\label{E:T-main}
\| T h\|_{L^2}\leq C \|h\|_{L^2}.
\end{equation}
\end{lem}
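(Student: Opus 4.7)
The plan is to treat $T$ as an oscillatory integral operator with amplitude
\[
a(x,\xi)=(|x||\xi|)^{\gamma}\int_{\omega\in S^2_+}e^{-i(x\cdot\omega)(\xi\cdot\omega)}b(\cos\theta)\,d\Omega(\omega),
\]
and to reduce~\eqref{E:T-main} to Lemma~\ref{lemma:Fourier_integral} by stationary phase in $\omega$ after a phase-space decomposition. First I would partition $\mathbb{R}^3_x\times\mathbb{R}^3_\xi$ into three pieces: (i) the compact core $\{|x|\le 16\}\cup\{|\xi|\le 16\}$, where $a(x,\xi)$ is uniformly bounded so that the corresponding piece of $T$ is $L^2$-bounded by a standard Calder\'on--Vaillancourt or direct kernel estimate; (ii) the good cone $\mathcal{G}=(\Gamma_x\times\Gamma_\xi)\cap\{|x|>8,\,|\xi|>8\}$, on which the angle $\alpha(x,\xi)$ between $x$ and $\xi$ lies in $(\pi/8,7\pi/8)$; (iii) the bad cone $\mathcal{B}$ where $\alpha$ is close to $0$ or $\pi$.

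On $\mathcal{G}$, I would apply stationary phase in $\omega$ to the phase $\Phi_{x,\xi}(\omega)=-(x\cdot\omega)(\xi\cdot\omega)$ on $S^2_+$. In a frame adapted to $\mathrm{span}(x,\xi)$ the tangential gradient vanishes at finitely many isolated points of $\mathrm{span}(x,\xi)\cap S^2$ together with $\pm(x\times\xi)/|x\times\xi|$, and the surface Hessian at each critical point has determinant bounded below by a multiple of $(|x||\xi|\sin\alpha)^2$, uniformly on $\mathcal{G}$. The boundary contribution from the equator $\theta=\pi/2$ vanishes automatically because $b(\cos\theta)=\cos\theta$ is zero there, so no boundary terms enter. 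Stationary phase then yields
\[
\chi_\mathcal{G}(x,\xi)\,a(x,\xi)=\sum_{k}e^{i\psi_k(x,\xi)}p_k(x,\xi)+r(x,\xi),
\]
where each $\psi_k$ is smooth on $\mathcal{G}$, homogeneous of degree $1$ separately in $x$ and $\xi$, and satisfies~\eqref{E:non_degenerate0} and~\eqref{E:phase-upper-bound}; each $p_k$ lies in $SG^{\gamma-1,\gamma-1}$ (the prefactor $(|x||\xi|)^{\gamma}$ is offset by the two-dimensional stationary phase gain $(|x||\xi|)^{-1}$); and $r$ is a rapidly decreasing remainder. A dyadic decomposition $p_k=\sum_{j,l\ge 0}\chi_j(x)\chi_l(\xi)p_k(x,\xi)$ puts each piece under the hypotheses of Lemma~\ref{lemma:Fourier_integral} with $\mathcal{P}_{j,l}$ of order $2^{(\gamma-1)(j+l)}$. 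The sum in $(j,l)$ converges for $\gamma<1$; in the borderline case $\gamma=1$, $p_k$ is of class $SG^{0,0}$ and the sum is closed by the same Cotlar--Stein mechanism as in the proof of Lemma~\ref{lemma:Fourier_integral}.

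On $\mathcal{B}$ the critical points of $\Phi_{x,\xi}$ coalesce and the FIO picture degenerates, so I would estimate this piece of $T$ directly. When $\xi$ is exactly parallel (or antiparallel) to $x$, an explicit change of variable yields $a(x,\xi)=\pi(|x||\xi|)^{\gamma}(1-e^{\mp i|x||\xi|})/(i|x||\xi|)$, so $|a|\lesssim(|x||\xi|)^{\gamma-1}$; a perturbation argument then provides $SG^{\gamma-1,\gamma-1}$ symbol bounds uniformly on $\mathcal{B}$, from which standard pseudodifferential calculus, combined with the geometric localization from~\cite{JC12} quantifying how the $\omega$-support shrinks with $\alpha$ and with the vanishing of $\cos\theta$, delivers the $L^2$ bound for $T_\mathcal{B}$.

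The hardest step is the stationary phase analysis on $\mathcal{G}$ together with the subsequent dyadic summation, especially the borderline case $\gamma=1$ where no geometric decay is available and one relies entirely on the $SG^{0,0}$ calculus and Cotlar--Stein. This is precisely the place where the new idea of trading the kinetic factor $|x|^{\gamma}$ for the $(|x||\xi|)^{-1}$ stationary phase gain produces the sharp regularizing exponent $\gamma$.
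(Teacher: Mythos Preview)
Your treatment of the good cone $\mathcal{G}$ is essentially the paper's estimate of $T_0$: stationary phase in $\omega$ produces two phases $\psi_\pm(x,\xi)=\tfrac12(x\cdot\xi\mp|x||\xi|)$ with $SG^{\gamma-1,\gamma-1}$ amplitudes, and Lemma~\ref{lemma:Fourier_integral} applies directly (no further dyadic sum in $(j,l)$ is needed, since that is already built into the lemma). This part is fine.

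There are two genuine gaps. First, your region (i) is $\{|x|\le 16\}\cup\{|\xi|\le 16\}$, which is \emph{not} a compact core: it contains, for instance, $|x|\le 16$ with $|\xi|$ arbitrarily large. On that set $a(x,\xi)$ is certainly not uniformly bounded as a symbol, and Calder\'on--Vaillancourt does not apply. In the paper this is the region $B$, and it is handled by the scaling identity $a(x,\xi)=a(2^N x,2^{-N}\xi)$, which pushes each dyadic block back into region~$A$ so that the FIO machinery can be reused. Your proposal skips this entirely.

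Second, and more seriously, your handling of the bad cone $\mathcal{B}$ does not work. The critical points $\omega_\pm$ do not coalesce as $\theta_0\to 0$ or $\pi$; rather, one of the two resulting FIO phases $\psi_\pm$ loses the non-degeneracy condition~\eqref{E:non_degenerate0}, since $|\det\partial_x\partial_\xi\psi_\pm|=\tfrac18|1\mp\cos\theta_0|$. Your explicit parallel-direction formula already shows that $a$ contains the oscillatory factor $e^{\mp i|x||\xi|}$, so $a$ is \emph{not} in any $SG^{m_1,m_2}$ class on $\mathcal{B}$: each $x$- or $\xi$-derivative of that oscillation costs a full power, and ``standard pseudodifferential calculus'' cannot absorb this. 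The paper confronts this degeneracy head-on: it decomposes $\mathcal{B}$ dyadically in the angle via $\zeta_{\pm n}$, splits each angular shell into regions I, II, III according to the size of $|x||\xi|\cos^2(\theta_0/2)\sin^2(\theta_0/2)$, keeps the FIO structure on I and II, and tracks precisely how the constants $\mathcal{C}_{n+},\mathcal{Q}_{n+},\mathcal{P}_{n+}$ of Lemma~\ref{lemma:Fourier_integral} vary with $n$ to obtain a geometric series. On region III stationary phase is abandoned in favour of the crude bound $|a|\le C(|x||\xi|)^{-1/2}$ and a Schur test. None of this is captured by a perturbation from the parallel case or by the geometric argument of~\cite{JC12}, which (as the paper stresses) only works under compactness in $|x|$.
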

\begin{proof}

For the analysis of $a(x,\xi)$ we need a dyadic partition of unity 
on the phase space $\{(x,\xi)|x,\xi\in\mathbb{R}^3-\{0\} \}$.
Using~\eqref{D:chi-k}, we have the dyadic partition of unity by
\[
\begin{split}
1&=\sum_{j\in \mathbb{Z}} \chi_{j}(x) \sum_{l\in \mathbb{Z}} \chi_{l}(\xi) ,\; x\neq0,\; \xi\neq 0\\
&=\chi_{A}(x,\xi)+\chi_{B}(x,\xi)+\chi_{C}(x,\xi)
\end{split}
\]
where
\begin{equation}\label{D:dyadic-partition}
\begin{split}
&\chi_{A}(x,\xi)=\sum_{j\in \mathbb{N}} \chi_{j}(x) \sum_{l\in \mathbb{N}} \chi_{l}(\xi), \\
&\chi_{B}(x,\xi)=\chi_{B,1}(x,\xi)+\chi_{B,2}(x,\xi)\\
&=\sum_{j+l\geq 2,j\geq 1, l \leq 0} \chi_{j}(x) \chi_{l}(\xi)+\sum_{j+l\geq 2,j\leq 0} \chi_{j}(x) \chi_{l}(\xi), \\
&\chi_{C}(x,\xi)=\chi_{C,1}(x,\xi)+\chi_{C,2}(x,\xi)\\
&=\sum_{j+l\leq 1,j\geq 1} \chi_{j}(x) \chi_{l}(\xi)+\sum_{j+l\leq 1,j\leq 0} \chi_{j}(x) \chi_{l}(\xi).
\end{split}
\end{equation}
Hence ${\rm supp}\;\chi_{A}(x,\xi)\subset \{|x|>8,|\xi|>8\}$, ${\rm supp}\;\chi_{B,1}(x,\xi)\subset \{|x||\xi|>64, |x|>8, |\xi|<16\}$
, ${\rm supp}\;\chi_{B,2}(x,\xi)\subset \{|x||\xi|>64, |x|<16\}$, ${\rm supp}\;\chi_{C,1}(x,\xi)\subset \{|x||\xi|<512, |x|>8 \}$
and ${\rm supp}\;\chi_{C,2}(x,\xi)\subset \{|x||\xi|<512, |x|<16 \}$.
Using~\eqref{D:dyadic-partition}, we decompose $a(x,\xi)$  as
\begin{equation}\label{D:a-decom-three}
\begin{split}
a &=a_{A}+a_{B,1}+a_{B,2}+a_{C,1}+a_{C,2}\\
&\eqdef\;\chi_{A}\cdot a+\chi_{B,1}\cdot a+\chi_{B,2}\cdot a+\chi_{C,1}\cdot a+\chi_{C,2}\cdot a.\\
\end{split}
\end{equation}
Plugging this decomposition into~\eqref{D:T-Fourier}, we write
$T=(T_{A}+T_{B,1}+T_{B,2}+T_{C,1}+T_{C,2})$ accordingly.
It is reduced to proving that all these operators are $L^2$ bounded.

\noindent{\bf Part I. Estimate of $T_{A}$}\par

For the analysis of $a_A(x,\xi)$, we need a dyadic decomposition in the interval $(0,\pi)$ which is constructed below.
Let $\zeta(\theta)\in C^{\infty}$ be supported in the interval  $(\pi/8,\pi/2)$ and satisfy $\sum_{z\in\mathbb{Z}} \zeta(2^{-z}\theta)=1$
for all $\theta>0$. Let $\zeta_n(\theta)=\zeta(2^n \theta)$, if $n\in\mathbb{N}$  and $\tilde{\zeta}(\theta)=1-\sum_{n\in\mathbb{N}}
\zeta_n(\theta)$.  Define $\zeta_0(\theta)$ equals $\tilde{\zeta}(\theta)$, if $0< \theta\leq \pi/2$ and $\zeta_0(\theta)=\zeta_0(\pi-\theta)$, 
if $\pi/2<\theta<\pi$. This extension of  $\zeta_0$ from $0<\theta\leq \pi/2$
to $\pi/2<\theta<\pi$ by reflection keeps $\zeta_0$ a smooth function since $\tilde{\zeta}$ equals $1$ near $\pi/2$. 
We also define $\zeta_{-n}(\theta)=\zeta_n(\pi-\theta)$ for $n\in\mathbb{N}$. Then we have the dyadic decomposition 
$1=\zeta_0(\theta)+\sum_{n\in\mathbb{N}}(\zeta_n(\theta)+\zeta_{-n}(\theta))$ in the interval $\theta\in (0,\pi)$. 
Abuse the notations, we define
\begin{equation}\label{D:chi-j}
\zeta_{0}(x,\xi)=\zeta_{0}(\arccos(\frac{x\cdot\xi}{|x||\xi|}))\;,\;\zeta_{\pm n}(x,\xi)=\zeta_{\pm n}(\arccos(\frac{x\cdot\xi}{|x||\xi|}))
\;,\;n\in\mathbb{N}.
\end{equation}
Thus the supports of $\zeta_0(x,\xi),\zeta_{n}(x,\xi),\zeta_{-n}(x,\xi)$ lie respectively
in the cones 
\begin{equation}\label{D:Gamma-n}
\begin{split}
 & \Gamma_0={\Big \{} (x,\xi)|\;  \frac{\pi}{8} < \arccos(\frac{x\cdot\xi}{|x||\xi|}) 
 < {\pi}-\frac{\pi}{8} {\Big \}}, \\
 & \Gamma_{n}= {\Big \{} (x,\xi)|\;  \frac{\pi}{2^{n+3}} < \arccos(\frac{x\cdot\xi}{|x||\xi|}) < \frac{\pi}{2^{n+1}} {\Big \}}\\
 & \Gamma_{-n}={\Big \{} (x,\xi)|\; \pi(1-\frac{1}{2^{n+1}}) < \arccos(\frac{x\cdot\xi}{|x||\xi|})< \pi(1-\frac{1}{2^{n+3}}) {\Big \}}.
\end{split}
\end{equation}
Define
\begin{equation}\label{D:a-j}
a_{z}(x,\xi)=\zeta_{z}(x,\xi)a_A(x,\xi),\;z\in\mathbb{Z}
\end{equation}
and write $a_A(x,\xi)=\sum_{z\in\mathbb{Z}}a_{z}(x,\xi)$.
Then we have $T_A=\sum_{z\in\mathbb{Z}} T_z$ where
\begin{equation}\label{D:T-j}
T_{z} h(x)=\int_{\mathbb{R}^3} e^{i{x\cdot\xi}} a_{z}(x,\xi) \widehat{h}(\xi) d\xi,\;z\in\mathbb{Z}.
\end{equation}

We furthermore decompose each $a_{z}(x,\xi)=\zeta_{z}(x,\xi)\chi_A(x,\xi)a(x,\xi),\;z\neq 0$ into three parts. 
More precisely, for each $z\neq 0$, we decompose $\chi_{A}(x,\xi)$ into three parts by selecting suitable disjoint subgroups 
of $(j,l)\in\mathbb{N}\times\mathbb{N}$ in~\eqref{D:dyadic-partition} so that each has support lies respectively in  
\begin{equation}\label{D:Gamma-three-regions}
\begin{split}
& \Gamma_{z, I}=\{(x,\xi)\in\Gamma_{z}, |x|>8\cdot 2^{|z|},|\xi|>8\cdot 2^{|z|}\}\\
& \Gamma_{z, II}=\{(x,\xi)\in\Gamma_{z}, |x||\xi|>8^2\cdot 2^{2|z|}, 8<|\xi|<4\cdot 8\cdot 2^{|z|}\}\\
&{\hskip 1cm}\cup\{(x,\xi)\in\Gamma_{z}, |x||\xi|>8^2\cdot 2^{2|z|}, 8<|x|<4\cdot 8\cdot 2^{|z|}\}\\ 
& \Gamma_{z, III}=\{(x,\xi)\in\Gamma_{z}, 64<|x||\xi|<16\cdot8^2\cdot 2^{2|z|}\}.
\end{split}
\end{equation}
Those partition functions are denoted by $\chi_{z,I},\chi_{z,II}$ and $\chi_{z,III}$ respectively.
Then we have the decomposition of $a_z(x,\xi),z\neq 0 $ as 
\begin{equation}\label{D:a-j-three}
\begin{split}
 a_{z}(x,\xi)&=\chi_{z,I}(x,\xi) a_{z}(x,\xi) +\chi_{z,II}(x,\xi) a_{z}(x,\xi)+\chi_{z,III}(x,\xi) a_{z}(x,\xi)\\
 &\eqdef a_{z,I}(x,\xi)+ a_{z,II}(x,\xi)+ a_{z,III}(x,\xi)
\end{split}
\end{equation}
and decomposition of $T_{A}$ as
\[
 T_{A}=T_0+\sum_{z\neq 0} (T_{z,I}+T_{z,II}+T_{z,III} ).
\]

We will see that $T_0$ is the sum of two Fourier integral operators and a smooth operator. 
Each operator $T_{z,I}$ is similar to $T_0$, each $T_{z,II}$ behaves like $T_{z,I}$ after change of variables 
and  $T_{z,III}$ degenerates.
 We define $\Gamma_{0,A}=\{(x,\xi)\in\Gamma_0,|x|>8,|\xi|>8\}$ and
\begin{equation}
\begin{split}
&{\rm region\;I}=(\bigcup_{z\neq 0} \Gamma_{z, I})\bigcup \Gamma_{0,A} , \\
&{\rm region\;II}=\bigcup_{z\neq 0} \Gamma_{z, II}\;,
\;{\rm region\;III}=\bigcup_{z\neq 0} \Gamma_{z, III}.
\end{split}
\end{equation}
Let the angle spanned by $x$ and $\xi$ be $\theta_0$, we have 
$|x||\xi|\cos^2(\theta_0/2)\sin^2(\theta_0/2)>C_1>1$ on region I  and II and 
$|x||\xi|\cos^2(\theta_0/2)\sin^2(\theta_0/2)<C_2$ on region III for fixed constants $C_1,C_2$.

\noindent{\bf (i)}. Estimate for $T_0+\sum T_{z,I}$.\par 

First we give the precise description of $a_0(x,\xi)$ and functions in~\eqref{D:a-j-three}.
When $(x,\xi)$ lies in region I, we will calculate $a(x,\xi)$ given in~\eqref{D:a-x-xi} by using the 
stationary phase formula (Theorem~\ref{L:stationary} from~\cite{Hor83} is recorded here).  
The calculation here is similar to that in~\cite{Lio94,JC12}. We claim that on region I we have 
\begin{equation}\label{E:a-I}
a(x,\xi)=c_1 e^{-i|x||\xi|\sigma_+(x,\xi)} p_{+}(x,\xi)
+c_2 e^{-i|x||\xi|\sigma_-(x,\xi)} p_{-}(x,\xi)+ s(x,\xi)
\end{equation}
where 
\[
\sigma_{\pm}(x,\xi)=\frac{1}{2}(\frac{x\cdot\xi}{|x||\xi|}\pm 1),
\]
$s \in SG^{-\infty,-\infty}$ is a SG symbol of the smooth operator and $p_{\pm}\in SG^{\gamma-1,\gamma-1}
\subseteq SG^{0,0}$ are SG symbols of order $(0,0)$ 
satisfying~\eqref{E:symbol-derivative-bound}. Recall the SG symbols are defined in the notation subsection and 
$p(x,\xi)\in SG^{0,0}$ means
\[
|\partial^{\alpha}_{x}\partial^{\beta}_{\xi}p(x,\xi)|\leq C_{\alpha\beta}\lr{x}^{0-|\alpha|}\lr{\xi}^{0-|\beta|}.
\]
We remark that in this paper the term "symbol of order m" always means "SG symbols of order (m,m)".  
For our estimate, it is no harm to drop the symbol $s(x,\xi)$ in~\eqref{E:a-I}.
Together with the construction of $a_0,a_{z,I},z\neq 0$ in~\eqref{D:a-j} and~\eqref{D:a-j-three} 
we may write,  
\begin{equation}\label{E:a-A-I}
\begin{split}
&a_0(x,\xi)=c_1 e^{-i|x||\xi|\sigma_+(x,\xi)} p_{0+}(x,\xi)
+c_2 e^{-i|x||\xi|\sigma_-(x,\xi)} p_{0-}(x,\xi)\\
&a_{z,I}(x,\xi)=c_1 e^{-i|x||\xi|\sigma_+(x,\xi)} p_{z+}(x,\xi)
+c_2 e^{-i|x||\xi|\sigma_-(x,\xi)} p_{z-}(x,\xi)
\end{split}
\end{equation}
where 
\[
p_{0\pm}(x,\xi)=\zeta_0\cdot \chi_A\cdot p_{\pm}(x,\xi)\;,\;p_{z,\pm}(x,\xi)=\zeta_z\cdot\chi_{z,I}\cdot p_{\pm}(x,\xi).
\]
By the definitions of $\zeta_z,\chi_A$ and $\chi_{z,I}$, we see that $p_{0\pm}(x,\xi),p_{z,\pm}(x,\xi)\in SG^{0,0}$ 
satisfy~\eqref{E:symbol-derivative-bound} where $C_{\alpha\beta}$ are independent of $z$.

Now we prove that ~\eqref{E:a-I} holds on region I. By  parametrization 
\begin{equation}\label{E:parametrization}
\left\{
\begin{array}{l}
 x=|x|(0,0,1) \\
 \xi=|\xi|(\cos\varphi_0\sin\theta_0,\sin\varphi_0\sin\theta_0,\cos\theta_0) \\
 \omega=(\cos\varphi\sin\theta,\sin\varphi\sin\theta,\cos\theta)\;,\;\theta\in[0,\pi/2],\varphi\in[0,2\pi].
\end{array}
\right.
\end{equation}
the phase function in definition of $a(x,\xi)$ can be written as
\[
(x\cdot\omega)(\xi\cdot\omega) 
=|x||\xi|(\cos\theta(\cos(\varphi-\varphi_0)\sin\theta_0\sin\theta+\cos\theta_0\cos\theta).
\]
We regard $|x||\xi|$ as the parameter and define
\begin{equation}\label{E:phase-theta}
\sigma(x,\xi;\omega)
=\cos\theta(\cos(\varphi-\varphi_0)\sin\theta_0\sin\theta+\cos\theta_0\cos\theta)
\end{equation}
be the phase function of integral~\eqref{D:a-x-xi} which is obvious smooth over $S^2$.
In order to use the stationary phase formula, we need to find the critical point of the phase function 
$\sigma(x,\xi;\omega)$ and its Hessian over the critical point.
It is known that we can calculate these quantities on the local coordinate  or equivalently on manifold 
by using covariant derivatives (see, for example,~\cite{BW97}). We will use the later approach here.
The parametrization~\eqref{E:parametrization} defines a mapping from $(\theta,\varphi)$ to sphere. 
Then $\{e_{\theta}=\frac{\partial}{\partial \theta},
e_{\varphi}=\frac{1}{\sin \theta}\frac{\partial}{\partial \varphi}\} $ 
is a basis on tangent bundle of sphere.
Recall that the gradient and Hessian are given by the formulas,
\begin{equation}\label{E:gradient-formula}
\bigtriangledown_{S^2}:=(\frac{\partial}{\partial\theta},\frac{1}{\sin\theta}\frac{\partial}{\partial \varphi})
\end{equation} and
\begin{equation}\label{E:Hessian-formula}
H_{S^2}:=\left[
\begin{array}{cc}
\frac{\partial^2}{\partial \theta^2} &\frac{1}{\sin\theta}\frac{\partial^2}{\partial\theta\partial\varphi}
-\frac{\cos\theta}{\sin^2\theta}\frac{\partial}{\partial\varphi} \\
\frac{1}{\sin\theta}\frac{\partial^2}{\partial\theta\partial\varphi}
-\frac{\cos\theta}{\sin^2\theta}\frac{\partial}{\partial\varphi}
 & \frac{1}{\sin^2\theta}\frac{\partial^2}
{\partial\varphi^2}+\frac{\cos\theta}{\sin\theta}\frac{\partial}{\partial\theta} 
\end{array}\right].\\
\end{equation}

Applying~\eqref{E:gradient-formula} to $|x||\xi|\sigma(x,\xi;\omega)$, we see that the critical points 
of this function  satisfy the formula
\begin{equation}\label{E:critical-point-relation}
  (x\cdot\omega)\xi+(\xi\cdot\omega)x=2(x\cdot\omega)(\xi\cdot\omega)\omega\;,\;\omega\in S^2.
\end{equation}
Note $ b(\cos\theta)\sin\theta$ vanishes when $x\cdot\omega=0$, 
thus we only have to look for critical points $\omega$ such that $x\cdot\omega\neq 0$. 
By relation~\eqref{E:critical-point-relation}, we have $\xi\cdot\omega\neq 0$. Hence $\omega$
belongs to the plane generated by $x$ and $\xi$. A simple calculation yields four critical points
$\omega_{+},\omega_{-},-\omega_{+},-\omega_{-}$ where 
\begin{equation}\label{E:critical-points}
\left\{
\begin{array}{l}
\omega_{+}=(\xi|x|+x|\xi|)|\xi|x|+x|\xi||^{-1}\\
\omega_{-}=(\xi|x|-x|\xi|)|\xi|x|-x|\xi||^{-1}
\end{array}
\right.
\end{equation}    
if $x$ and $\xi$ are not colinear.  
We also express them as
\begin{equation}\label{E:critical-points-theta}
\begin{split}
\omega_+=&(\cos\varphi_0\sin(\theta_0/2),\sin\varphi_0\sin(\theta_0/2),
\cos(\theta_0/2)), \\
\omega_-=&(\cos(\pi+\varphi_0)\sin((\pi-\theta_0)/2),\\
& \qquad \sin(\pi+\varphi_0)\sin((\pi-\theta_0)/2),\cos((\pi-\theta_0)/2)). 
\end{split}
\end{equation} 
If $x$ and $\xi$ are colinear, the four critical points are $\pm\frac{x}{|x|}=\pm\frac{\xi}{|\xi|}$
and $\pm\omega^{\perp}$ where $\omega^{\perp}$ is a unit vector orthogonal to $x$ (and $\xi$). We observe that if 
$\frac{|x\cdot\xi|}{|x||\xi|}=0$ or $1$, then for all these four critical points, $\frac{|x\cdot\omega|}{|x|}$ is $0$ or $1$. 
The case $x\cdot\omega=0$ has been excluded, while we may disregard the case 
$|x\cdot\omega|/|x|=1$ since $b(\cos\theta)\sin\theta=0$ at $\theta=0$ or $\pi$. 
Thus we only have to consider the critical points of the form~\eqref{E:critical-points}.  
Furthermore, the support of $b(\cos\theta)$ is in $[0,\pi/2]$, thus we only have to consider the contribution from 
two critical points, $\omega_+$ and $\omega_-$ when applying the stationary phase formula.

The Hessians of the phase function $\sigma(x,\xi;\omega)$ at $\omega_+$ and $\omega_-$ are 
\begin{equation}\label{E:Hessian}
\sigma''(x,\xi;\omega_{\pm})=\left[
\begin{array}{cc}
-2& 0\\
0& -2\sigma_{\pm}(x,\xi)
\end{array}
\right]
\end{equation}
where
\[
\sigma_{\pm}(x,\xi)=\frac{1}{2}(\frac{x\cdot\xi}{|x||\xi|}\pm 1)=\sigma(x,\xi;\omega_{\pm}).
\]
Since 
\[
|\det \sigma''(x,\xi;\omega_{\pm})|=2^2|\sigma_{\pm}|\;,\;{\sgn}\;\sigma''(\omega_{\pm})=\mp 2,
\]
the critical points do not degenerate.  

To calculate $a(x,\xi)$ we introduce a partition of unity 
on $S^2$ such that 
\begin{equation}\label{D:kappa-i}
\kappa_i\in C^{\infty}(S^2),\;
0\leq\kappa_i\leq 1,\;\sum_{i=1}^3 \kappa_i=1
\end{equation}
and $\kappa_i\equiv 1,\;i=1,2$ in a neighborhood of $\omega_{\pm}$.
By integration by parts, we see that 
\begin{equation}\label{E:a-g0-loc}
(|x||\xi|)^{\gamma}\int_{\omega\in S^2_+}b(\cos\theta)
e^{-i(x\cdot\omega)(\xi\cdot\omega)}\kappa_3(\omega)d\Omega(\omega)\in SG^{-\infty,-\infty}
\end{equation}
is the symbol of smooth operator, since the support of $b(\cos\theta)\kappa_3(\omega)$ is compact,
the phase function has no critical point on this support. This gives the last term of~\eqref{E:a-I}.
Note that if one extend $b(\cos\theta)$ evenly from $[0,\pi/2]$ to $[0,\pi]$, it again suffices to consider 
two critical points $\omega_{+},\omega_{-}$ since $(x\cdot\omega)(\xi\cdot\omega)$ and $b(\cos\theta)$ are even in $\omega$,  
the contributions to $a(x,\xi)$ from the critical points $-\omega_{+},-\omega_{-}$ are identical respectively 
to those from $\omega_{+},\omega_{-}$. 

We see from~\eqref{E:parametrization},~\eqref{E:critical-points} that if the angle spanned 
by $x$ and $\xi$ is ${\theta_0},0<{\theta_0}<\pi$,
then the angles spanned by $x$ and $\omega_{\pm}$ are ${\theta_0}/2$ and 
$(\pi-{\theta_0})/2$ respectively.
Applying stationary phase formula to~\eqref{E:a-g0-loc} with $\kappa_3$ replaced by $\kappa_i,1\leq i\leq 2$, we have
\begin{equation}\label{E:stationary}
\begin{split}
& a(x,\xi)-s(x,\xi)\\
&
= \pi (|x||\xi|)^{\gamma} |\det(|x||\xi|\sigma''(x,\xi,\omega_+))|^{-\frac{1}{2}}e^{\frac{\pi i}{4}{\rm\sgn\sigma''(\omega_+)}}\times\\
&{\hspace{2cm}} e^{-i|x||\xi|\sigma_+(x,\xi)}{\Big\{}  b(\cos(\frac{\theta_0}{2}))\sin(\frac{\theta_0}{2})+O(|x|^{-1}|\xi|^{-1}){\Big\}}\\
&+ \pi (|x||\xi|)^{\gamma} |\det(|x||\xi|\sigma''(x,\xi,\omega_-))|^{-\frac{1}{2}}e^{\frac{\pi i}{4}{\rm\sgn\sigma''(\omega_-)}}\times\\
&{\hspace{.5cm}}e^{-i|x||\xi|\sigma_-(x,\xi)}{\Big\{}  b(\cos(\frac{\pi}{2}-\frac{\theta_0}{2}))
\sin(\frac{\pi}{2}-\frac{\theta_0}{2}) +O(|x|^{-1}|\xi|^{-1}){\Big\}}\\
&= e^{-i|x||\xi|\sigma_+(x,\xi)}p_+(x,\xi)+e^{-i|x||\xi|\sigma_-(x,\xi)}p_{-}(x,\xi).
\end{split}
\end{equation}
Also we note that  the first term of $p_{\pm}(x,\xi)$  are respectively
\begin{equation}\label{E:s-plus}
\begin{split}
&-i\pi\;(|x||\xi|)^{\gamma} |\det(|x||\xi|\sigma''(x,\xi,\omega_+))|^{-\frac{1}{2}}   b(\cos(\theta_0/2))
\sin(\theta_0/2)\\
&=\frac{-i\pi\;(|x||\xi|)^{\gamma}\cos(\theta_0/2)\sin(\theta_0/2)}{2|x||\xi||1+x\cdot\xi/|x||\xi||^{1/2}} \\
&=\frac{-i\pi\;\cos(\theta_0/2)\sin(\theta_0/2)}{2|1+\cos\theta_0|^{1/2}}(|x||\xi|)^{\gamma-1}\\
&=-2^{-3/2}i\pi\;  \sin(\frac{\theta_0}{2})(|x||\xi|)^{\gamma-1},
\end{split}
\end{equation} 
\begin{equation}\label{E:s-minus}
\begin{split}
&i\pi\; (|x||\xi|)^{\gamma} |\det(|x||\xi|\sigma''(x,\xi,\omega_-))|^{-\frac{1}{2}}b(\cos({\pi}/{2}-{\theta_0}/{2}))\sin({\pi}/{2}-{\theta_0}/{2}) \\
&=\frac{i\pi\;(|x||\xi|)^{\gamma}\cos({\pi}/{2}-{\theta_0}/{2})\sin({\pi}/{2}-{\theta_0}/{2})}{2|x||\xi||1-x\cdot\xi/|x||\xi||^{1/2}}\\
&=\frac{i\pi\;\cos({\pi}/{2}-{\theta_0}/{2})\sin({\pi}/{2}-{\theta_0}/{2})}{2|1-\cos\theta_0|^{1/2}}(|x||\xi|)^{\gamma-1}\\
&= 2^{-3/2}i\pi\;\cos(\frac{\theta_0}{2})(|x||\xi|)^{\gamma-1}.\\
\end{split}
\end{equation} 

The terms~\eqref{E:s-plus} and~\eqref{E:s-minus} are symbols of order $0$ (more precisely $\gamma-1$) when $(x,\xi)\in {\rm supp}\chi_A$
as the $x$ or $\xi$ derivative of $\cos(\theta_0/2)$ or $\sin(\theta_0/2)$ descends one order in $x$ or $\xi$ 
respectively.  For example,
\[
\frac{\partial}{\partial \xi_j}\cos(\frac{\theta_0}{2})=-\frac{1}{2}\sin(\frac{\theta_0}{2})\frac{\partial}{\partial \xi_j}\theta_0
\]
where
\[
\begin{split}
\frac{\partial}{\partial \xi_j}{\theta_0}&=\frac{\partial}{\partial \xi_j} \arccos(\frac{x\cdot\xi}{|x||\xi|}) \\
&=\frac{-1}{\sqrt{1-(x\cdot\xi/|x||\xi|)^2}}\frac{\partial}{\partial \xi_j}
{\Big (}\frac{x\cdot \xi}{|x||\xi|} {\Big)}\\
&=\frac{-1}{\sin\theta_0}\frac{x_j-\cos\theta_0|x|\xi_i/|\xi|}{|x||\xi|}.
\end{split}
\]
Using~\eqref{E:parametrization}, we see that 
\begin{equation}\label{E:derivative-theta}
\frac{\partial}{\partial \xi_j}{\theta_0}=
\left\{
\begin{array}{l}
-\cos\theta_0\cos\phi_0 |\xi|^{-1}\;,j=1\\
-\cos\theta_0\sin\phi_0 |\xi|^{-1}\;,j=2\\
-\sin\theta_0 |\xi|^{-1}\;,j=3
\end{array}
\right.
\end{equation}
The big O of~\eqref{E:stationary} can be calculated explicitly using stationary phase 
asymptotics~\eqref{F:stationary-formula}.  
When $k\geq 2$, the $k$-th term of asymptotics have the form
\begin{equation}\label{E:lower-order}
\cos(\theta_0/2)\sin({\theta_0}/{2})\frac{q_{\pm k}(\cos(\theta_0/2),\sin({\theta_0}/{2}))}
{(|x||\xi|\cos^2(\theta_0/2)\sin^2({\theta_0}/{2}))^{k-1}}(|x||\xi|)^{\gamma-1}
\end{equation}
where $q_{\pm k}(t,s)$ are  polynomials of $(t,s)$. 
The remainder term of the stationary phase formula also has the form as above.  
On the region I, we have 
\[
|x||\xi|\cos^2(\theta_0/2)\sin^2(\theta_0/2)>C_1>1.
\] 
Using this fact, definition of region I together with $\sin(\theta_0/2),\cos(\theta_0/2)$ are symbols of order $0$, 
we know that ~\eqref{E:lower-order} are symbols of order $-k+1$ (precisely  $-k+1+\gamma-1$) on  region I. 
By the asymptotic sum of symbols,  we can conclude that $p_{\pm}(x,\xi)$ of~\eqref{E:a-A-I} are symbols 
of order $0$ on  region I. Furthermore $p_{\pm}(x,\xi)$ satisfy~\eqref{E:symbol-derivative-bound}  on region I.  
We remark that in order to estimate the $L^2$ bounds of the operators defined on region I or II 
induced by~\eqref{E:a-I}, one can use only the first four terms of the asymptotic expansion of the stationary phase 
formula instead of full series.  Since the operator induced by the remainder term has kernel of the form~\eqref{E:lower-order} with $k=5$    
which are integrable with respect to $x$ for fixed $\xi$ and vice verse. Thus the boundedness of the operator given by the remainder term
follows from Schur test. 

Using~\eqref{E:a-A-I}, we begin to estimate $T_0$ and $T_{z,I}$.  That is to consider 
the operators of the form~\eqref{D:T-j} with $a_z$ replaced by the right hand side of 
~\eqref{E:a-A-I}.  
Define
\[
\psi_{\pm}(x,\xi)=x\cdot\xi-|x||\xi|\sigma_{\pm}(x,\xi)=\frac{1}{2}(x\cdot\xi\mp |x||\xi|).
\]
By Plancherel theorem, we need to show that the operators 
\begin{equation}\label{D:model-FIO}
 F_{z\pm}h(x)=\int e^{i\psi_{\pm}(x,\xi)} p_{z\pm}(x,\xi){h}(\xi)d\xi,\; z\in\mathbb{Z}
\end{equation}
are $L^2$ bounded and their norms form a convergent series. 
We note that 
\begin{equation}\label{E:non-degeneracy-a}
|\det\partial_x\partial_{\xi} {\Big [} \psi_{\pm}(x,\xi)){\Big ]}|
=|\det\frac{1}{2}{\Big [}I\mp\frac{x}{|x|}\otimes\frac{\xi}{|\xi|}{\Big ]}|
= (\frac{1}{2})^3|1\mp\cos\theta_0|.
\end{equation} 
When $(x,\xi)\in{\rm supp\;}p_{0\pm}(x,\xi)\subset \Gamma_{0,A}$, there exists constants $C_1,C_2$ such that
\begin{equation}\label{E:non-degeneracy-a-0}
 0<C_1<(\frac{1}{2})^3|1\mp\cos\theta_0|<C_2.
\end{equation}
The phase functions $\psi_{\pm}(x,\xi)$ of $F_{0\pm}$ satisfy 
~\eqref{E:non_degenerate0} and~\eqref{E:phase-upper-bound}. Also
$p_{0\pm}$ satisfy~\eqref{E:symbol-derivative-bound}. Thus we conclude that 
$F_{0\pm}$ are $L^2$ bounded by Lemma~\ref{lemma:Fourier_integral}.

For the estimates of $F_{z\pm},\;z\neq 0$, we note that $1-\cos\theta_0$ tends to $0$ on the support 
of $p_{z+}(x,\xi)\subset \Gamma_{z,I}$ as $z$ tends to $\infty$, has uniform lower and upper 
bounds for $z<0$.
On the other hand,  $1+\cos\theta_0$ tends to $0$ on the support 
of $p_{z-}(x,\xi)$ as $z$ tends to $-\infty$, has uniform lower and upper bounds for $z>0$.
This symmetry and the form of the symbols $p_{\pm}(x,\xi)$ indicate that we only have to consider 
$F_{n\pm}$ with $n\in\mathbb{N}$. 
We note that the phase function 
$\psi_{-}(x,\xi)$ of $F_{n-}$ satisfies~\eqref{E:non_degenerate0},~\eqref{E:phase-upper-bound} of 
Lemma~\ref{lemma:Fourier_integral} and  has uniform lower and upper bounds on $\Gamma_{n-}$.
The amplitude functions $p_{n-}$ of $F_{n-}$ enjoy ~\eqref{E:symbol-derivative-bound}
with the same $C_{\alpha\beta}$.  Thus we may sum up $F_{n-}$ to obtain a new operator defined on region I,
then Lemma~\ref{lemma:Fourier_integral} implies that this operator is $L^2$ bounded.

For the estimate of the $L^2$ norms of $F_{n+}$, we need to introduce some notations. 
In~\eqref{E:L-2-bound} of  the Lemma~\ref{lemma:Fourier_integral}, the upper bound
of the $L^2$ norm of $F$ is denoted $\mathcal{C}\mathcal{Q}\mathcal{P}$. When we applying 
Lemma~\ref{lemma:Fourier_integral} to estimate the upper bounds of $L^2$ norms of $F_{n+}$
, we should use $\mathcal{C}_{n+}\mathcal{Q}_{n+}\mathcal{P}_{n+}$ to indicate its dependence on $n+$. 
The proof of the result that $F_{n+},\;n\in\mathbb{N}$ are $L^2$ bounded and their norms form a convergent series
is inferred by the following three results. 
The first one is that $F_{1+}$ is $L^2$ bounded, then the ratio $\mathcal{C}_{2+}\mathcal{Q}_{2+}\mathcal{P}_{2+}
/\mathcal{C}_{1+}\mathcal{Q}_{1+}\mathcal{P}_{1+}$ for $F_{2+}$ and $F_{1+}$ is less than 1 
and finally the calculation of above ratio works for any pair $F_{(n+1)+}$ and $F_{n+}$ with $n\in\mathbb{N}$.

The first result that $F_{1+}$ is $L^2$ bounded follows from Lemma~\ref{lemma:Fourier_integral} directly since
 $\psi_+$ satisfies conditions~\eqref{E:non_degenerate0},~\eqref{E:phase-upper-bound} on 
 ${\rm supp}\;p_{1+}\subset \Gamma_1$ and $p_{1+}$ satisfies~\eqref{E:symbol-derivative-bound}. 
Next we calculate the ratio $\mathcal{C}_{2+}\mathcal{Q}_{2+}\mathcal{P}_{2+}
/\mathcal{C}_{1+}\mathcal{Q}_{1+}\mathcal{P}_{1+}$. The constants $\mathcal{C}_{1+}$ and $\mathcal{C}_{2+}$
are determined by the constant $C$ in~\eqref{E:x-distance} which is from the constants $C$ 
of~\eqref{E:decay_kernel},~\eqref{E:decay_kernel-2} and~\eqref{E:decay_kernel-3}. 
Note that we do not need to consider the constant $C$ in~\eqref{E:xi-distance} by symmetry. 
The constants $C$ of~\eqref{E:decay_kernel} and~\eqref{E:decay_kernel-2} come from~\eqref{E:non_degenerate1} 
, i.e., the $C_1$ of~\eqref{E:non-degeneracy-a-0}. The $C_1$ on $\Gamma_{2,I}$ is about $1/4$
of that on $\Gamma_{1,I}$. On the other hand, the second term of~\eqref{E:non-degeneracy-a} and the 
parametrization~\eqref{E:parametrization} indicate that this $1/4$ decrease only occurs along the third 
component of $\xi$ (or $x$) when we use~\eqref{E:L-change} and integration by parts. The 
calculation~\eqref{E:derivative-theta} for $j=3$ suggests that actually we have $1/2$ decrease since 
$\sin\theta_0$ on the $\Gamma_{2,I}$ is about $1/2$ of that on $\Gamma_{1,I}$. By these observations, it 
is easy to check that the constants $C$ of~\eqref{E:decay_kernel} and~\eqref{E:decay_kernel-2} on $\Gamma_{2,I}$
is $16$ times of that on $\Gamma_{1,I}$ after applying integration by parts 4 times. 
From the  definition of $\Gamma_{1,I},\Gamma_{2,I}$ and the form of 
symbols~\eqref{E:s-plus},~\eqref{E:lower-order}, we see that 
\[
\mathcal{P}_{2+}\leq \frac{1}{2} \mathcal{P}_{1+}
\]
without counting the decrease of $\sin\theta_0$ from $\Gamma_{1,I}$ to $\Gamma_{2,I}$ mentioned before. Also by definition 
$\mathcal{Q}_{2+}$ is about 
$1/4$ of $\mathcal{Q}_{1+}$. Combining these facts together, we see that the ratio 
$\mathcal{C}_{2+}\mathcal{Q}_{2+}\mathcal{P}_{2+}
/\mathcal{C}_{1+}\mathcal{Q}_{1+}\mathcal{P}_{1+}$ is about $1/2$ in this case. The other case is that $C$
in~\eqref{E:x-distance} comes from $C$ of~\eqref{E:decay_kernel-3}, where the later is independent of $n+$. 
Thus the the ratio $\mathcal{C}_{2+}\mathcal{Q}_{2+}\mathcal{P}_{2+}/\mathcal{C}_{1+}\mathcal{Q}_{1+}\mathcal{P}_{1+}$
is about $1/8$ in this case. By the telescopic definition of $\Gamma_{n+}$, the above argument works for ratio  
$\mathcal{C}_{(n+1)+}\mathcal{Q}_{(n+1)+}\mathcal{P}_{(n+1)+}/\mathcal{C}_{n+}\mathcal{Q}_{n+}\mathcal{P}_{n+}$ 
as well for all $n$ and we conclude that $T_0+\sum_{z\neq 0} T_{z,I}$ is $L^2$ bounded.

\noindent{\bf (ii)} Estimate for $T_{z,II},z\neq 0$. \par
We should prove that the upper bound of  $L^2$ norm of $T_{z,II}$ is no more than $2$ times that of $T_{z,I}$, then the result
follows from the estimates of $T_{z,I}$ before.  The factor $2$ comes from the fact that the definition of 
$\Gamma_{z,II}$ contains $2$ pieces. We should prove that when $T_{z,II}$ is restricted to one of 
these piece, its upper bound of $L^2$ norm is bounded by that of $T_{z,I}$. Recall the definition 
\[
T_{z,II} h(x)=\int_{\mathbb{R}^3} e^{ix\cdot\xi} a_{z,II}(x,\xi)\widehat{h}(\xi)d\xi
\]
where $a_{z,II}(x,\xi)=\chi_{z,II}(x,\xi)a(x,\xi)$ and $\chi_{z,II}(x,\xi)$ has support on the region
\[
\begin{split}
\Gamma_{z,II}&=\{(x,\xi)\in\Gamma_{z}, |x||\xi|>8^2\cdot 2^{2|z|}, 8<|\xi|<4\cdot 8\cdot 2^{|z|}\}\\
& \cup\{(x,\xi)\in\Gamma_{z}, |x||\xi|>8^2\cdot 2^{2|z|}, 8< |x|<4\cdot 8\cdot 2^{|z|}\}.\\ 
\end{split}
\] 
We can write $\chi_{z,II}=\chi^1_{z,II}+\chi^2_{z,II}$ according to each piece of 
$\Gamma_{z,II}$ as
\[
\begin{split}
&\chi^1_{z,II}(x,\xi)\eqdef \zeta_z(x,\xi)\sum_{(j,l)\in I^1_{z,II}} \chi_j(x)\chi_l(\xi),\;\\
&\chi^2_{z,II}(x,\xi)\eqdef \zeta_z(x,\xi)\sum_{(j,l)\in I^2_{z,II}} \chi_j(x)\chi_l(\xi),
\end{split}
\]
where
\[
\begin{split}
& I^1_{z,II}=\{(j,l):j+l\geq 2|z|+2, 1\leq l\leq |z|\} \\
& I^2_{z,II}=\{(j,l):j+l\geq 2|z|+2, 1\leq j\leq |z|\}. 
\end{split}
\]
Following this, we have the decomposition 
\[
a_{z,II}(x,\xi)=a^1_{z,II}(x,\xi)+a^2_{z,II}(x,\xi)\eqdef \chi^1_{z,II}(x,\xi)a(x,\xi)+\chi^1_{z,II}(x,\xi)a(x,\xi)
\]
and $T_{z,II}=T^1_{z,II}+T^2_{z,II}$ accordingly. First we estimate $L^2$ norm of $T^1_{z,II}$.
By Plancherel theorem, the $L^2$ norm of $T^1_{z,II}$ is equivalent to that of the operator
\[
\mathcal{T}_{z,II} u(x) =\int_{\mathbb{R}^3} e^{ix\cdot\xi}\; a^1_{z,II}(x,\xi)\; {u}(\xi) d\xi. 
\] 
We should estimate it 
using change of variables and Lemma~\ref{lemma:Fourier_integral}. Write
\[
\mathcal{T}_{z,II} u(x) =\sum_{(j,l)\in I^1_{z,II}}\mathcal{T}_{z(j,l)} u(x)
=\sum_{(j,l)\in I^1_{z,II}}\int_{\mathbb{R}^3} e^{ix\cdot\xi}\; a_{z(j,l)}(x,\xi)\; {u}(\xi) d\xi
\]
with 
\[
a_{z(j,l)}(x,\xi)=\zeta_{z}(x,\xi)\chi_j(x)\chi_l(\xi)a(x,\xi).
\]
From Lemma~\ref{lemma:Fourier_integral}, we know that $L^2$ norm of $\mathcal{T}_{z,II}$ is determined by 
$\|\mathcal{T}_{z(j,l)}\mathcal{T}^*_{z(k,m)}\|_{L^2\rightarrow L^2}$ and 
$\|\mathcal{T}^*_{z(j,l)}\mathcal{T}_{z(k,m)}\|_{L^2\rightarrow L^2}$ where $(j,l),(k,m)\in I^1_{z,II}$.
By symmetry, it suffices to illustrate how to estimate 
$\|\mathcal{T}_{z(j,l)}\mathcal{T}^*_{z(k,m)}\|_{L^2\rightarrow L^2}$ by change of variables.

We begin with a useful observation. 
Since $1\leq l\leq |z|$, there exists $N\in\mathbb{N},\;N\leq |z|$ so that $l+N= |z|+1$. Let 
$x=2^N\ti{x},\xi=2^{-N}\ti{\xi}$ and note $x\cdot\omega/|x|=\ti{x}\cdot\omega/|\ti{x}|$ for $\omega\in S^2_+$.  
Thus we have 
\[
\begin{split}
a(x,\xi)&=(|x||\xi|)^{\gamma}\int_{\omega\in S^2_+} e^{-i(x\cdot\omega)(\xi\cdot\omega)} b(\cos\theta) d\Omega(\omega)\\
&=(|\ti{x}||\ti{\xi}|)^{\gamma}\int_{\omega\in S^2_+} e^{-i(\ti{x}\cdot\omega)(\ti{\xi}\cdot\omega)} b(\cos\theta) d\Omega(\omega)\\
&=a(\ti{x},\ti{\xi}).
\end{split}
\] 
From $(x\cdot\xi)/|x||\xi|=(\ti{x}\cdot\ti{\xi})/|\ti{x}||\ti{\xi}|$, we have
\[
\begin{split}
a_{z(j,l)}(x,\xi)&=\zeta_{z}(x,\xi)\chi_j(x)\chi_l(\xi)a(x,\xi)\\
&=\zeta_{z}(\ti{x},\ti{\xi})\chi_{j-N}(\ti{x})\chi_{l+N}(\ti{\xi})a(\ti{x},\ti{\xi})\\
&=a_{z(j-N,l+N)}(\ti{x},\ti{\xi}).
\end{split}
\]
Since
\begin{equation}\label{E:Gamma-II-relation}
j+l\geq 2|z|+2,
\end{equation}
 we have $j-N\geq |z|+1\;,\;l+N\geq |z|+1$ which means that new variables $(\ti{x},\ti{\xi})$ lie
in the region I. Thus we can calculate $a_{z(j-N,l+N)}(\ti{x},\ti{\xi})$ as {\bf(i)} and conclude that 
they are the form of the second line of~\eqref{E:a-A-I}. 

Note that 
\[
\mathcal{T}_{z(j,l)}\mathcal{T}^*_{z(k,m)} u(x)=\int K_{z(j,l),(k,m)}(x,y)u(y)dy,
\]
where 
\[
 K_{z(j,l),(k,m)}(x,y)
=\int e^{i(x\cdot\xi-y\cdot\xi)} a_{z(j,l)}(x,\xi)\overline{a_{z(k,m)}(y,\xi)}
d\xi.
\]
The kernel non-vanishes only $l=m-1,l=m$ or $l=m+1$, we may assume $l=m+1$. 
Then the observation in the previous paragraph indicates that we can apply change of variables
according to $a_{z(k,m)}$, so that 
\[
\begin{split}
&K_{z(j,l),(k,m)}(x,y)\\
&=\int e^{i(\ti{x}\cdot{\ti\xi}-\ti{y}\cdot\ti{\xi})} a_{z(j-N,l+N)}(\ti{x},\ti{\xi})\overline{a_{z(k-N,m+N)}(\ti{y},\ti{\xi})}
2^{-3N}d\ti{\xi}\\
&= 2^{-3N}K_{z(j-N,l+N),(k-N,m+N)}(\ti{x},\ti{y})
\end{split}
\]
where $j-N,l+N,k-N,m+N$ are all greater than $|z|+1$. This means the kernel $K_{z(j-N,l+N),(k-N,m+N)}(\ti{x},\ti{y})$ can be calculated 
explicitly as  $a_{z(j-N,l+N)},\;a_{z(k-N,m+N)}$ have explicitly formulas by the calculation in {\bf (i)}.
Note
\[
\begin{split}
\mathcal{T}_{z(j,l)}\mathcal{T}^*_{z(k,m)}u(2^N\ti{x})=\int K_{z(j-N,l+N),(k-N,m+N)}(\ti{x},\ti{y})u(2^N\ti{y})d\ti{y}.
\end{split}
\]
This means 
\[
\|\mathcal{T}_{z(j,l)}\mathcal{T}^*_{z(k,m)}\|_{L^2_y\rightarrow L^2_x}=
\|\mathcal{T}_{z(j-N,l+N)}\mathcal{T}^*_{z(k-N,m+N)}\|_{L^2_{\ti{y}}\rightarrow L^2_{\ti{x}}},
\]
and the later can be estimated as we did in Lemma~\ref{lemma:Fourier_integral} whose quantity is controlled 
by $z$, $|(j-N)-(k-N)|=|j-k|$ and $|(l+N)-(m+N)|=|l-m|$, independent of $N$.
Thus $\|{T}^1_{z,II}\|_{L^2\rightarrow L^2}=\|\mathcal{T}_{z,II}\|_{L^2\rightarrow L^2}\leq \|T_{z,I}\|_{L^2\rightarrow L^2}$. 
The proof of $\|{T}^2_{z,II}\|_{L^2\rightarrow L^2}\leq \|T_{z,I}\|_{L^2\rightarrow L^2}$ is essential the same,  we
skip it.

\noindent {\bf (iii)}.  Estimate for $T_{z,III},z\neq 0$. \par  

By Plancherel theorem, it is equivalent to write 
\[
\begin{split}
T_{z,III} h(x)&=\int_{\mathbb{R}^3} e^{ix\cdot\xi} a_{z,III}(x,\xi)\; h(\xi) d\xi \\
&= \int_{\mathbb{R}^3} K_{z,III}(x,\xi)\; h(\xi) d\xi
\end{split}
\]
where $a_{z,III}(x,\xi)=\chi_{z,III}(x,\xi) a_z(x,\xi)=\chi_{z,III}(x,\xi) \zeta_{z}(x,\xi)  a(x,\xi)$. 

Recall that  when $(x,\xi)\in \Gamma_{z,III}$ we have
\begin{equation}\label{E:III-relation}
|x||\xi|\cos^2(\theta_0/2)\sin^2(\theta_0/2)<C_2,
\end{equation}
and this means the stationary phase formula is not a good tool to calculate $a_{z,III}(x,\xi)$. 
On the other hand the calculation of $a_{z,I}(x,\xi)$ still gives some hint, thus we continue using 
the nations there. Recall 
\[
a(x,\xi)=(|x||\xi|)^{\gamma}\int_{\omega\in S^2_+} e^{-i(x\cdot\omega)(\xi\cdot\omega)} b(\cos\theta) d\Omega(\omega)
\]
and $\omega_{\pm}$ are the critical points of phase function $(x\cdot\omega)(\xi\cdot\omega)$.  
Use~\eqref{E:parametrization},~\eqref{E:critical-points-theta} and consider a smooth partition on $S^2$ as
\[
\overline{\kappa}_i\in C^{\infty}(S^2)\;,\; 0\leq\overline{\kappa}_i\leq 1\;,\;\sum_{i=1}^3\overline{\kappa}_i=1
\]
where $\overline{\kappa}_1\equiv 1$ when 
\[
(\theta,\varphi)\in (\varphi_0-{C_2}{(|x||\xi|)^{-1/2}},\varphi_0+{C_2}{(|x||\xi|)^{-1/2}})\times [\;0,{C_2}{(|x||\xi|)^{-1/2}})
\]
and $\overline{\kappa}_1\equiv 0$
when 
\[
(\theta,\varphi)\notin (\varphi_0-2{C_2}{(|x||\xi|)^{-1/2}},\varphi_0+2{C_2}{(|x||\xi|)^{-1/2}})\times [\;0,2{C_2}{(|x||\xi|)^{-1/2}}).
\]
And $\overline{\kappa}_2\equiv 1$ when 
\[
(\theta,\varphi)\in (\varphi_0-{C_2}{(|x||\xi|)^{-1/2}},\varphi_0+{C_2}{(|x||\xi|)^{-1/2}})\times 
(\pi/2-{C_2}{(|x||\xi|)^{-1/2}},\pi/2]
\]
and $\overline{\kappa}_2\equiv 0$
when 
\[
(\theta,\varphi)\notin (\varphi_0-2{C_2}{(|x||\xi|)^{-1/2}},\varphi_0+2{C_2}{(|x||\xi|)^{-1/2}})\times 
(\pi/2-2{C_2}{(|x||\xi|)^{-1/2}},\pi/2].
\]
From the relation~\eqref{E:III-relation} and constructions of $\overline{\kappa}_1, \overline{\kappa}_2$, we know that 
$\omega_{\pm}$ must lie in the supports of $\overline{\kappa}_1, \overline{\kappa}_2$ respectively and the distances 
of $\omega_{\pm}$ to the boundary of support are of order $(|x||\xi|)^{-1/2}$. Hence there exists $C>0$ such that for 
$\omega\in {\rm supp}\overline{\kappa}_3$ we have
\[
|\nabla_{S^2} [(x\cdot\omega)(\xi\cdot\omega)]|\geq C(|x||\xi|)^{1/2}
\]
where $\nabla_{S^2}$ is the gradient on the sphere given by~\eqref{E:gradient-formula}. Therefore the contribution from 
the support of $\overline{\kappa}_3$ to $a(x,\xi)$ is bounded by $(|x||\xi|)^{-n}$ for any $n\in\mathbb{N}$. On the 
supports of $\overline{\kappa}_1,\overline{\kappa}_2$ we have $\cos\theta\sin\theta\leq C(|x||\xi|)^{-1/2}$. Hence 
their contributions to $a(x,\xi)$ is bounded by $C(|x||\xi|)^{-1/2+(\gamma-1)}$. In summary, we have 
\begin{equation}\label{E:a-III-bound}
|a(x,\xi)|\leq C(|x||\xi|)^{-1/2},
\end{equation}
and thus
\begin{equation}\label{E:K-III}
|K_{z,III}(x,\xi)|\leq C\chi_{z,III}(x,\xi) \zeta_{z}(x,\xi) (|x||\xi|)^{-1/2}.
\end{equation}
Let $z=\pm 1$ temporarily.  Let $p(\xi)=|\xi|^{-3/2}$ and $q(x)=|x|^{-3/2}$. For any fixed $x_0$ ($|x_0|>8$), using spherically
coordinate, we have   
\begin{equation}\label{E:Schur-K-III}
\begin{split}
& \int |K_{\pm 1,III}(x_0,\xi)| p(\xi)  d\xi \\
& = C_1|x_0|^{-1/2} \int_8^{4096/|x_0|} r^{-1/2}\cdot r^{-3/2} \cdot r^2 dr \\
&\leq C_2 |x_0|^{-3/2}=C_2q(x_0). 
\end{split}
\end{equation}
Similarly, 
For any fixed $\xi_0$ ($|\xi_0|>8$) we have   
\[
\begin{split}
& \int |K_{\pm 1,III}(x,\xi_0)| q(\xi)  dx \\
& = C_1|\xi_0|^{-1/2} \int_8^{4096/|\xi_0|} r^{-1/2}\cdot r^{-3/2} \cdot r^2 dr \\
&\leq C_2 |\xi_0|^{-3/2}=C_2p(\xi_0). 
\end{split}
\]
where $C_1,C_2$ are the same as those in~\eqref{E:Schur-K-III} by symmetry. 
Thus $T_{\pm 1, III}$ are $L^2$ bounded by Schur test. To see that $T_{z,III},\;z\neq \pm 1$ are also $L^2$ bounded and their
norms form a convergent series we need to track $C_1,C_2$ of~\eqref{E:Schur-K-III} as $|z|$ varies. 
When $|z|$ goes from $n,n\in\mathbb{N}$ to  $n+1$, the $\zeta_z$ part of~\eqref{E:K-III} indicates that $C_1$ 
of the former is about $1/4$ of that for later. On the other hand, the $\chi_{z,III}$ part of~\eqref{E:K-III} or definition 
of $\Gamma_{z,III}$ indicates the upper limit in the integration of the second line of ~\eqref{E:Schur-K-III} 
increases $4$ times from $n$ to $n+1$. Therefore $C_2$ of the former is the same as that for the later.  To get 
a convergent series of $C_2$s, we may adjust the definitions of $\Gamma_{z,I}, \Gamma_{z,II},\Gamma_{z,III}$ 
in~\eqref{D:Gamma-three-regions} by replacing $2^{|z|},2^{2|z|}$ with $2^{(1-\delta)|z|},2^{2(1-\delta)|z|}$  for some 
small positive $\delta<1/14$. This adjustment will not affect the result of estimates in {\bf (i)} and {\bf (ii)}.  
This is because as we remarked in the end of the paragraph after~\eqref{E:lower-order}, we may drop the lower order terms of the 
stationary phase asymptotics and the fact that we  use integration by parts only 4 times in 
Lemma~\ref{lemma:Fourier_integral}.  Hence the ratio $\mathcal{P}_{(n+1)+}/\mathcal{P}_{n+}<2^{-(1-12\delta)}$
 after this adjustment.  Also we see that $C_2$ for $n+1$ is $2^{-2\delta}$ times of 
that for $n$ after the adjustment and  the result follows.

\noindent{\bf Part II. Estimate of $T_{B}$}\par

The proof of the result that $T_B$ is $L^2$ bounded is essential the reminiscence of Part I. 
Recall that 
\[
T_{B} h(x)=\sum_{j=1}^2 T_{B,j} h(x)=\sum_{j=1}^2\int_{\mathbb{R}^3} e^{ix\cdot\xi} \chi_{B,j}(x,\xi)a(x,\xi) 
\widehat{h}(\xi)d\xi 
\]
where  ${\rm supp}\;\chi_{B,1}(x,\xi)\subset \{|x||\xi|>64, |x|>8, |\xi|<16\}\;,\;
{\rm supp}\;\chi_{B,2}(x,\xi)\subset \{|x||\xi|>64, |x|<16\}$. 

Similarly to~\eqref{D:Gamma-three-regions}, we split operators $T_{B,1},T_{B,2}$ into sum of operators by 
decomposing the supports of $\chi_{B,1},\chi_{B,2}$ into cones according to~\eqref{D:Gamma-n}. 
Each cone is further split into three regions by letting $z\in\mathbb{Z}$ and defining
\begin{equation}\label{D:Gamma-three-regions-II}
\begin{split}
& \Gamma_{z, I}=\{(x,\xi)\in\Gamma_{z}, |x|>8\cdot 2^{|z|},|\xi|>8\cdot 2^{|z|}\}\\
& \Gamma_{z, II}=\{(x,\xi)\in\Gamma_{z}, |x||\xi|>8^2\cdot 2^{2|z|}, |\xi|<4\cdot 8\cdot 2^{|z|}\}\\
&{\hskip 1cm}\cup\{(x,\xi)\in\Gamma_{z}, |x||\xi|>8^2\cdot 2^{2|z|}, |x|<4\cdot 8\cdot 2^{|z|}\}\\ 
& \Gamma_{z, III}=\{(x,\xi)\in\Gamma_{z}, 64<|x||\xi|<16\cdot8^2\cdot 2^{2|z|}\}. 
\end{split}
\end{equation}
Then we define 
\begin{equation}
\begin{split}
&{\rm region\;I}_B=(\;\bigcup_{z} {\Gamma}_{z, I})\bigcap ({\rm supp}\;\chi_{B,1}\cup {\rm supp}\;\chi_{B,2}) \\ 
&{\rm region\;II}_B=(\;\bigcup_{z} \Gamma_{z, II})\bigcap ({\rm supp}\;\chi_{B,1}\cup {\rm supp}\;\chi_{B,2}) \\
&{\rm region\;III}_B=(\;\bigcup_{z} \Gamma_{z, III})\bigcap ({\rm supp}\;\chi_{B,1}\cup {\rm supp}\;\chi_{B,2}).
\end{split}
\end{equation}
From the supports of $\chi_{B,1},\chi_{B,1}$ we know that on the region $I_B$, only $\Gamma_{\pm 1,I}$ has 
non-empty intersection with the set $({\rm supp}\;\chi_{B,1}\cup {\rm supp}\;\chi_{B,2})$. Thus only two 
operators defined on region $I_B$ have non-zero values.  Also these two operators are $L^2$ bounded by 
the argument in {\bf (i)} of Part I.  For the operators defined on region $II_B$, we note that the definition of 
$\Gamma_{z,II}$ in~\eqref{D:Gamma-three-regions-II} is slightly different with that 
in~\eqref{D:Gamma-three-regions}. The conditions $|\xi|>8$ in the first set and $|\xi|>8$ in the second set 
of the $\Gamma_{z,II}$ in~\eqref{D:Gamma-three-regions} were removed now. However these two conditions in 
~\eqref{D:Gamma-three-regions} were used to emphasize the sets are part of support of $\chi_{A}$ but not 
used in the proof of {\bf (ii)} of Part I. 
Indeed, it is the condition $|x||\xi|>8^2\cdot 2^{2|z|}$, i.e.,~\eqref{E:Gamma-II-relation} ensures the argument 
in the proof of {\bf (ii)} works. Thus the operators defined on region $II_B$ are $L^2$ bounded and their norms 
form a convergent series as before. Also the operators defined on region $III_B$ can be treated exactly as 
{\bf (iii)} of Part I and we finish the proof of Part II.

\noindent{\bf Part III. Estimate of $T_{C}$}\par

By Plancherel Theorem, the $L^2$ boundedness of $T_C$ is equal to that of
\[
\mathcal{T}h(x)=\int_{\mathbb{R}^3} e^{ix\cdot\xi} \; a_{C}(x,\xi)\; h(\xi)d\xi.
\]
Recall that  
\[
\begin{split}
a_{C}(x,\xi)&=a_{C,1}(x,\xi)+a_{C,2}(x,\xi)\\
&=\chi_{C,1}(x,\xi)(|x||\xi|)^{\gamma}\int_{\omega\in S^2}e^{-i(x\cdot\omega)
(\xi\cdot\omega)} b(\cos\theta) d\Omega(\omega) \\
&{\hskip 1cm}+ \chi_{C,2}(x,\xi)(|x||\xi|)^{\gamma}\int_{\omega\in S^2}e^{-i(x\cdot\omega)
(\xi\cdot\omega)} b(\cos\theta) d\Omega(\omega)
\end{split}
\]
where ${\rm supp}\chi_{C,1}\subset \{|x||\xi|<512,|x|>8\}$ and ${\rm supp}\chi_{C,2}\subset \{|x||\xi|<512,|x|<16\}$.
Write the operator $\mathcal{T}$ as 
\[
\begin{split}
\mathcal{T}h(x)&=\mathcal{T}_1h(x)+\mathcal{T}_2h(x)\\
&=\int_{\mathbb{R}^3} e^{ix\cdot\xi} \; a_{C,1}(x,\xi) h(\xi)d\xi+\int_{\mathbb{R}^3} e^{ix\cdot\xi} \; a_{C,2}(x,\xi) h(\xi)d\xi\\
&= \int_{\mathbb{R}^3} K_1(x,\xi) h(\xi)d\xi +\int_{\mathbb{R}^3} K_2(x,\xi) h(\xi)d\xi
\end{split}
\]
It is clear from the support of $\chi_{C,j}$ that $K_j,\;i=1,2$ satisfies
\begin{equation}\label{E:kernel}
|K_{j}(x,\xi)|\leq C\cdot\chi_{C,j}(x,\xi).
\end{equation}
We should employ the Schur test  and 
consider two different sub-cases to prove 
that $\mathcal{T}$ is $L^2$ bounded. 

\noindent (a) boundedness of  $\mathcal{T}_1$. \par
 Let $p(x)=(1+|x|)^{-1}$ and $q(\xi)=|\xi|^{-2}$. For any fixed $|x_0|> 8$, 
using spherically coordinate and~\eqref{E:kernel}, we have 
\[
\begin{split}
\int_{\mathbb{R}^3} |K_1(x_0,\xi)|q(\xi)d\xi
&\leq C_1 \int_0^{512 |x_0|^{-1}} r^{-2}\cdot r^2 dr\\
&\leq C_2|x_0|^{-1}\leq C_3p(x_0).
\end{split}
\]
And for any fixed $|\xi_0|\leq 64$, we have 
\[
\begin{split}
\int_{\{\mathbb{R}^3,|x|>8\}} |K_1(x,\xi_0)|p(x)dx
&\leq C_1 \int_8^{512|\xi_0|^{-1}} r^{-1}\cdot r^2 dr\\
&\leq C_2|\xi_0|^{-2}= C_2q(\xi_0).
\end{split}
\]
By Schur test, we conclude the $L^2$ boundedness of this case.

\noindent (b)  boundedness of  $\mathcal{T}_2$.  \par

Let $p(x)=|x|^{-1}$ and $q(\xi)=(1+|\xi|)^{-2}$. For any fixed $|x_0|<16$,
using spherical coordinate and~\eqref{E:kernel} ,we have 
\[
\begin{split}
\int_{\mathbb{R}^3} |{K}_2(x_0,\xi)|q(\xi)d\xi
&\leq C_1 \int_0^{512|x_0|^{-1}} r^{-2}\cdot r^2dr\\
&\leq C_2|x_0|^{-1}=C_2p(x_0).
\end{split}
\]
For any fixed $|\xi_0|> 32$, we have 
\[
\begin{split}
\int_{\mathbb{R}^3} |{K}_2(x,\xi_0)|p(x)dx
&\leq C_1 \int_0^{512|\xi_0|^{-1}} r^{-1}\cdot r^{2} dr\\
&\leq {C_2}|\xi_0|^{-2}\leq C_3 q(\xi_0).
\end{split}
\]
And for any fixed $|\xi_0|\leq 32$, we have 
\[
\begin{split}
\int_{\{|x|\leq 16\}} |{K}_2(x,\xi_0)|p(x)dx
&\leq C_1 \int_0^{16} r^{-1}\cdot r^{2} dr\\
&\leq C_2\leq {C_3}q(\xi_0).
\end{split}
\]
By Schur test, we conclude that $\mathcal{T}$ is bounded. 

\end{proof}

\section{Proof of Lemma~\ref{L:T-v-v_*-ineq} and Lemma~\ref{L:H-v-v_*-ineq} and Corollary~\ref{C:Q-L}}\label{Proof of lemmas}

With the help of the proof of Lemma~\ref{L:T-main}, we can now prove Lemma~\ref{L:T-v-v_*-ineq} 
and Lemma~\ref{L:H-v-v_*-ineq} easily.

\begin{proof}[Proof of Lemma~\ref{L:T-v-v_*-ineq}]

Following~\eqref{D:T-Fourier-1},\eqref{D:a-x-xi} and~\eqref{D:T-Fourier}, we write 
\begin{equation}\label{E:T-translation}
\begin{split}
&(\tau_{-v_*}\circ \mathbb{T}\circ \tau_{v_*})\; h(v)\\
&=|v-v_*|^{\gamma}\int_{\omega\in S^2_+} b(\cos\theta)  h(v-
((v-v_*)\cdot\omega)\;\omega)d\Omega(\omega)\\
&=(2\pi)^{-3}\int_{\mathbb{R}^3}e^{iv\cdot\xi}\;\mathbb{A}(v-v_*,\xi)|\xi|^{\gamma}\;(|\xi|^{-\gamma}\widehat{h}(\xi))\;d\xi
\end{split}
\end{equation}
where 
\begin{equation}
\cos\theta=(v-v_*,\omega)/|v-v_*|,\;0\leq\theta\leq \pi/2,
\end{equation}
\begin{equation}\label{A-v-v-*}
\mathbb{A}(v-v_*,\xi)=|v-v_*|^{\gamma}\int_{S^2_+} b(\cos\theta)\;e^{-i((v-v_*)\cdot\omega)(\xi\cdot\omega)} d\Omega(\omega)
\end{equation}
Let 
\begin{equation}\label{D:a-v-v-*-xi}
a(v-v_*,\xi)=\mathbb{A}(v-v_*,\xi)|\xi|^{\gamma}.
\end{equation}
Then we are reduced to proving the $L^2$ boundedness of the operator 
\begin{equation}\label{D:T-v-v-*}
\int_{\mathbb{R}^3}e^{iv\cdot\xi} a(v-v_*,\xi)\; \widehat{h}(\xi)\;d\xi
\end{equation}
when it is regarded it as an operator of $v$ variable with $v_*$ as a parameter and vice versa.
Also we need to check the bounds are independent of the parameters.

The calculation of $a(x,\xi)$ in the proof of Lemma~\ref{L:T-main} can be applied to
that of $a(v-v_*,\xi)$ with $v-v_*$ playing the role of $x$. 
We may summarize that calculation of  $a$ in Lemma~\ref{L:T-main} as the following two cases. 
The first case is that $a$ has explicitly representation as~\eqref{E:a-I} (may need change of variables)
which induces the FIOs and their $L^2$ bounds are obtained by Lemma~\ref{lemma:Fourier_integral}. 
The second case is that $a(v-v_*,\xi)$ has explicitly upper bound as~\eqref{E:a-III-bound} or $a(v-v_*,\xi)$ is bounded as Part 
III of the proof of Lemma~\ref{L:T-main}. Then the 
operator induced by that can be estimated by Schur test directly. The second case of $a(v-v_*,\xi)$ for our current
estimates clearly give us the same result as the Lemma~\ref{L:T-main} no matter $v$ or $v_*$ is the 
variable of the operator. When the first case occurs, the operator~\eqref{D:T-v-v-*} is the sum of the 
operators of the following two forms, 
\begin{equation}\label{E:expression}
\int_{\mathbb{R}^3} e^{iv\cdot\xi}e^{-\frac{i}{2} [(v-v_*)\cdot\xi\pm|v-v_*||\xi|]}
\;p_{\pm}(v-v_*,\xi)\;\widehat{h}(\xi)d\xi 
\end{equation}
For any fixed $v_*$, we write~\eqref{E:expression}  as
\begin{equation}\label{E:expression-1}
\begin{split}
&\int_{\mathbb{R}^3} e^{\frac{i}{2}[(v-v_*)\cdot\xi\mp |v-v_*||\xi|]} p_{\pm}(v-v_*,\xi)
\; e^{iv_*\cdot\xi}\;  \widehat{h(\xi)}  d\xi\\
&=\int_{\mathbb{R}^3} e^{\frac{i}{2}[(v-v_*)\cdot\xi\mp |v-v_*||\xi|]} p_{\pm}(v-v_*,\xi)
\; \widehat{\tau_{-v_*}h}(\xi)  d\xi,
\end{split}
\end{equation}
and any fixed $v$, we write~\eqref{E:expression}  as
\begin{equation}\label{E:expression-2}
\int_{\mathbb{R}^3} e^{\frac{i}{2}[(v_*-v)\cdot\xi\mp |v_*-v||\xi|]} p_{\pm}(v-v_*,\xi)
\; \widehat{\tau_{-v}h}(\xi)  d\xi.
\end{equation}
From the facts that the translation is $L^2$ invariant, ~\eqref{E:expression-1} and~\eqref{E:expression-2}
have the same forms as~\eqref{D:model-FIO}, we conclude that the no matter $v$ or $v_*$ is the 
variable of the operator, the proof of Lemma~\ref{L:T-main} still works here for first case.    

\end{proof}

\begin{proof}[Proof of Lemma~\ref{L:H-v-v_*-ineq}]
First we prove the estimates~\eqref{E:T-c-v-ineq} and~\eqref{E:T-c-v_*-ineq}. Comparing~\eqref{Def:T}
and~\eqref{Def:T-c} and recalling the proof of Lemma~\ref{L:T-main-1} and Lemma~\ref{L:T-main}, 
we see that $\mathbb{T}_{\mathbbm{s}}$ also satisfies~\eqref{E:T-main-1}. By the proof of 
Lemma~\ref{L:T-v-v_*-ineq} above we know that $\mathbb{T}_{\mathbbm{s}}$ also 
satisfies~\eqref{E:T-v-ineq} and~\eqref{E:T-v_*-ineq}. Thus we only need to prove that when 
$\mathbb{T}_{\mathbbm{s}}$ is restricted to the low Fourier frequency ( $|\xi|<C$ in~\eqref{D:T-Fourier-1} ), 
denoted it by $\mathbb{T}_{\mathbbm{s}L}$, then the followings hold
\[
\begin{split}
&\sup\limits_{v_*}\|(\tau_{-v_*}\circ \mathbb{T}_{\mathbbm{s}L}\circ\tau_{v_*} )h(v)\|_{L^2(v)}\leq C \|h\|_{L^2},\\
&\sup\limits_{v}\|(\tau_{-v_*}\circ \mathbb{T}_{\mathbbm{s}L}\circ\tau_{v_*} )h(v)\|_{L^2(v_*)}\leq C \|h\|_{L^2}.
\end{split}
\] 
Since $(\tau_{-v_*}\circ \mathbb{T}_{\mathbbm{s}L}\circ\tau_{v_*} )h(v)$ has representation
~\eqref{E:T-translation} with $v-v_*$ and $\xi$ being restricted to compact sets respectively, 
these operators are clearly $L^2$ bounded.   

We turn to the proof of~\eqref{E:H-v-ineq} and~\eqref{E:H-v_*-ineq}. 
We denote
\[
|x|^{\gamma}_{\overline{\mathbbm{s}}}=(1-\mathbbm{s})(|x|)\cdot|x|^{\gamma}
\]
where $\mathbbm{s}$ is given by~\eqref{D:mathbbm-s}. Then we have  
\[
 H_{\overline{\mathbbm{s}}}(v,v_*) 
 =(2\pi)^{-3}\int_{\mathbb{R}^3}e^{iv\cdot\xi}\;\mathbb{A}_{\overline{\mathbbm{s}}}(v-v_*,\xi)\;\widehat{h}(\xi)\;d\xi
\]  
where 
\begin{equation}\label{A-v-v-*-w}
\begin{split}
\mathbb{A}_{\overline{\mathbbm{s}}}(v-v_*,\xi)&=\frac{|v-v_*|^{\gamma}_{\overline{\mathbbm{s}}}}{\lr{v}^{\gamma}\lr{v_*}^{\gamma}}
\int_{S^2_+} b(\cos\theta)\;e^{-i((v-v_*)\cdot\omega)(\xi\cdot\omega)} d\Omega(\omega) \\
&\eqdef (1-\mathbbm{s})(|v-v_*|)\;\mathbb{A}(v-v_*,\xi).
\end{split}
\end{equation}
From the definitions of $\mathbbm{s}$,\;\eqref{D:chi-k}~ and~\eqref{D:dyadic-partition}, we have the decomposition
\[
\begin{split}
&\mathbb{A}_{\overline{\mathbbm{s}}}(v-v_*,\xi)\\
&={\big (}\chi_{A}(v-v_*,\xi)+\chi_{B,1}(v-v_*,\xi)+\chi_{C,1}(v-v_*,\xi){\big )}\mathbb{A}(v-v_*,\xi)\\
&\eqdef \mathbb{A}_{A}(v-v_*,\xi)+\mathbb{A}_{B}(v-v_*,\xi)+\mathbb{A}_{C}(v-v_*,\xi)
\end{split}
\]
and the corresponding decomposition 
\[
H_{\overline{\mathbbm{s}}}(v,v_*)=H_{A}(v,v_*)+ H_{B}(v,v_*)+H_{C}(v,v_*).
\]
By the supports of $\chi_{A},\chi_{B,1}$ and $\chi_{C,1}$, we see that the estimates~\eqref{E:H-v-ineq} 
and~\eqref{E:H-v_*-ineq} follow from
\begin{equation}\label{E:H-v-v-*-region}
\begin{split}
&\sup\limits_{v_*}\|H_{A}(v,v_*) \|_{L^2(v)}\leq C \|h\|_{\dot{H}^{-\gamma}}\\
&\sup\limits_{v_*}\|H_{B}(v,v_*) \|_{L^2(v)}\leq C \|h\|_{L^2}\\
&\sup\limits_{v_*}\|H_{C}(v,v_*) \|_{L^2(v)}\leq C \|h\|_{L^2}
\end{split}
\end{equation}
and the other three  estimates which switches the roles of $v$ and $v_*$ of above estimates. By the argument of 
Lemma~\ref{L:T-v-v_*-ineq}, we know that it suffices to consider above three inequalities out of six. 
From the support of $\chi_A$, we know that the proof of  the first inequality of~\eqref{E:H-v-v-*-region}
may follow from the first case in the proof of Lemma~\ref{L:T-v-v_*-ineq}. 
Compare the definition of $\mathbb{A}_{\overline{\mathbbm{s}}}$ and $\mathbb{A}$ 
give by~\eqref{A-v-v-*} and note that 
the additional factor ${\lr{v}^{-\gamma}\lr{v_*}^{-\gamma}}$ does not affect the proof of first case 
there, i.e., the estimates for $p\pm(v-v_*,\xi)$ of~\eqref{E:expression} during the almost argument 
remain the same  after adding this factor, thus we conclude that result. 
For the second inequality of~\eqref{E:H-v-v-*-region}, we note 
that its proof may follow from Part II of the proof of Lemma~\ref{L:T-main}. The main idea there is 
that after using change of variables the operator can be written as FIOs on dyadic units. 
In the proof of Lemma~\ref{L:T-v-v_*-ineq} such argument works as well. Because $a(v-v_*,\xi)$  give 
by~\eqref{D:a-v-v-*-xi} contains two components which are both invariant under change of variables. More
precisely,  $|v-v_*|^{\gamma}|\xi|^{\gamma}
=|2^{-N}(v-v_*)|^{\gamma}|2^N\xi|^{\gamma}=|\ti{v}-\ti{v}_*|^{\gamma}|\ti{\xi}|^{\gamma}$ for any
$N\in\mathbb{N}$ and 
\begin{equation}\label{E:integration-invariant}
\int_{S^2_+} b(\cos\theta)\;e^{-i((v-v_*)\cdot\omega)(\xi\cdot\omega)} d\Omega(\omega)
=\int_{S^2_+} b(\cos\theta)\;e^{-i((\ti{v}-\ti{v}_*)\cdot\omega)(\ti{\xi}\cdot\omega)} d\Omega(\omega).
\end{equation}
When we applying change of variables argument to $\mathbb{A}_B(v-v_*,\xi)$, the integration part 
remains invariant as~\eqref{E:integration-invariant}. While the factor 
\[
\frac{|v-v_*|^{\gamma}_{\overline{\mathbbm{s}}}}{\lr{v}^{\gamma}\lr{v_*}^{\gamma}}
=\frac{|2^N(\ti{v}-\ti{v}_*)|^{\gamma}_{\overline{\mathbbm{s}}}}{\lr{2^N\ti{v}}^{\gamma}\lr{2^N\ti{v}_*}^{\gamma}}
\] 
combines the estimate of the later of~\eqref{E:integration-invariant} again induces the F.I.Os of order $0$.
Thus the arguments from Part II of the proof of Lemma~\ref{L:T-main} and Lemma~\ref{L:T-v-v_*-ineq} 
give us the second inequality of~\eqref{E:H-v-v-*-region}. To prove the third inequality 
of~\eqref{E:H-v-v-*-region}, we note that ${|v-v_*|^{\gamma}_{\overline{\mathbbm{s}}}}
/{\lr{v}^{-\gamma}\lr{v_*}^{-\gamma}}$ is bounded. Hence   
\[
|\mathbb{A}_{C}(v-v_*,\xi)|\leq C\chi_{C,1}(v-v_*,\xi).
\]
The above is parallel to~\eqref{E:kernel}, thus the argument of Part III of Lemma~\ref{L:T-main} 
gives us the desired inequality.
\end{proof}

\begin{proof}[Proof of Corollary~\ref{C:Q-L}]
For the homogeneous estimates, we note that in the proof of Lemma~\ref{L:T-main}, the operators
$T_{A}$ is a of order $\gamma-1$ instead of $0$. This means that if the reduction~\eqref{D:a-x-xi}
of $T$ is $T_A$ instead , we can replace $\gamma$ with $1$. Since the estimates $T_{B,1}$ follows 
from $T_A$, it enjoy the same property. And the operator $T_{C,1}$ is restricted to the low Fourier 
frequency, it is in our favor to rise the exponent. With these observations, we can derive the 
desired Lemma parallel to  Lemma~\ref{L:T-v-v_*-ineq} to conclude the result. 

The proof for inhomogeneous estimates is similarly, the key is that the first inequality of
~\eqref{E:H-v-v-*-region} can have $\dot{H}^{-1}$ in the left hand side due to the operator 
is defined by $\chi_A$. 
   
\end{proof}

\section{Some tools}

One of the most powerful tools in estimating the oscillatory integral
 $$I_{\Lambda}(u,f)=\int_{{\mathbb R}^n}e^{i\Lambda f(y)}u(y)dy,$$ for large $\Lambda$ is the 
following lemma of stationary phase asymptotics. There are several 
versions used widely, here we only record one of these which is 
from Theorem 7.7.5 of H\"{o}rmander~\cite{Hor83}. We use the notation $D_j=-i\partial_j$.
\begin{thm}[Stationary phase asymptotics]\label{L:stationary}
Let $K\subset{\mathbb R}^n$ be a compact set, $X$ an open neighborhood of $K$ and 
$k$ a positive number. 
If $u\in C^{2k}_{c}({\mathbb R}^n), f\in C^{3k+1}(X)$ and 
${\rm Im}\;f\geq 0$ in $X, {\rm Im}\;f(y_0)=0,
f'(y_0)=0,{\rm det}\;f''(y_0)\neq 0,f'\neq 0$ in $K\backslash\{{y_0}\}$ then
\begin{equation}\label{F:stationary-formula}
\begin{aligned}
|I&-e^{i\Lambda f(y_0)}({\rm det}(\Lambda f''(y_0)/2\pi i))^{-1/2}\sum_{j<k}\Lambda^{-j}L_ju|\\
  &\leq C\Lambda^{-k}\sum_{|\alpha|\leq 2k}\sup|D^{\alpha}u|\;,\;\Lambda>0
\end{aligned}
\end{equation}
Here $C$ is bounded when $f$ stays in a bounded set in $C^{3k+1}(X)$ and 
$|y-y_0|/|f'(y)|$ has a uniform bound. 
With $$g_{y_0}(y)=f(y)-f(y_0)-\langle f''(y_0)(y-y_0),y-y_0 \rangle/2$$ which 
vanish of third order at $x_0$ we have 
\begin{equation}\label{E:L-operator}
L_ju=\sum_{\nu-\mu=j}\;\;\sum_{2\nu\geq3\mu}i^{-j}2^{-\nu}\langle f''(y_0)^{-1}D,D\rangle^{\nu}
(g_{y_0}^{\mu}u)(y_0)/\mu!\nu!
\end{equation}
which is a differential operator of order $2j$ acting on $u$ at $y_0$. The coefficients are rational 
homogeneous functions of degree $-j$ in $f''(y_0),\cdots, f^{2j+2}(y_0)$ with denominator
$({\rm det}\; f''(y_0))^{3j}$. In every term the total number of derivatives of $u$ and $f''$ is at most $2j$. 
\end{thm}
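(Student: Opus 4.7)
The plan is to split the integral $I_\Lambda(u,f)$ into a contribution near $y_0$ and a contribution away from it using a smooth cut-off $\chi$ supported in a small neighborhood of $y_0$ with $\chi\equiv 1$ near $y_0$. On the support of $(1-\chi)u$ we have $f'\neq 0$, so repeated integration by parts with the transpose of the operator $L = (i\Lambda)^{-1}|f'|^{-2}\overline{f'}\cdot\nabla$ (adapted to the condition $\mathrm{Im}\,f\geq 0$) produces decay of order $\Lambda^{-N}$ for arbitrary $N$, which is absorbed into the remainder $C\Lambda^{-k}\sum_{|\alpha|\leq 2k}\sup|D^\alpha u|$.

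The core of the argument is the local analysis on the support of $\chi u$. Using the decomposition $f(y) = f(y_0) + \tfrac12 \langle f''(y_0)(y-y_0), y-y_0\rangle + g_{y_0}(y)$, with $g_{y_0}$ vanishing of third order at $y_0$, I would factor $e^{i\Lambda f(y)} = e^{i\Lambda f(y_0)}\,e^{i\Lambda Q(y-y_0)/2}\,e^{i\Lambda g_{y_0}(y)}$, where $Q = \langle f''(y_0)\cdot,\cdot\rangle$. Next, Taylor-expand the last exponential as $e^{i\Lambda g_{y_0}} = \sum_{\mu<k}(i\Lambda g_{y_0})^\mu/\mu! + R_k$ and, for each $\mu$, reduce to a Gaussian integral with amplitude $g_{y_0}^\mu u$. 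That Gaussian integral is evaluated by the Fourier identity
\begin{equation*}
\int e^{i\Lambda\langle Qz,z\rangle/2} v(z)\,dz = (\det(\Lambda Q/2\pi i))^{-1/2}\int e^{-i\langle Q^{-1}\xi,\xi\rangle/(2\Lambda)}\widehat{v}(\xi)\,d\xi,
\end{equation*}
the determinantal factor being defined by analytic continuation from the positive-definite case, which is how the hypothesis $\mathrm{Im}\,f\geq 0$ (hence $\mathrm{Im}\,Q\geq 0$) is accommodated. Expanding the exponential on the right as a second Taylor series in $1/\Lambda$ produces a double sum indexed by $(\mu,\nu)$; collecting terms with $j=\nu-\mu$ fixed and translating back by Plancherel yields precisely the operators $L_j$ given in~\eqref{E:L-operator}. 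The constraint $2\nu\geq 3\mu$ appears automatically: since $g_{y_0}^\mu$ vanishes to order $3\mu$ at $y_0$, the constant-coefficient operator $\langle f''(y_0)^{-1}D,D\rangle^\nu$ produces a nonzero value at $y_0$ only when $2\nu\geq 3\mu$.

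The main obstacle will be the sharp remainder estimate. One has to control two Taylor remainders at the same time: the remainder $R_k$ of $e^{i\Lambda g_{y_0}}$, which behaves like $|\Lambda g_{y_0}|^k$ and thus like $\Lambda^k|y-y_0|^{3k}$, and the remainder from the expansion of $e^{-i\langle Q^{-1}\xi,\xi\rangle/(2\Lambda)}$, which gains powers of $\Lambda^{-1}$ at the cost of factors $|\xi|^{2\nu}$ on the Fourier side. Balancing these against Plancherel's identity, together with the trivial estimate $|\widehat{u}(\xi)|\lesssim \langle\xi\rangle^{-2k}\sum_{|\alpha|\leq 2k}\sup|D^\alpha u|$, produces exactly the combined bound $\Lambda^{-k}\sum_{|\alpha|\leq 2k}\sup|D^\alpha u|$ with the stated number of derivatives. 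Uniformity of $C$ as $f$ varies in a bounded set of $C^{3k+1}(X)$, and under the assumed uniform bound on $|y-y_0|/|f'(y)|$, follows because only derivatives of $f$ up to order $3k+1$ ever appear, either in the coefficients of the $L_j$ (which are rational in the derivatives of $f$ with denominator a power of $\det f''(y_0)$) or in the integration-by-parts step away from $y_0$.
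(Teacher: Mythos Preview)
The paper does not prove this theorem at all: it is recorded in the ``Some tools'' section as a direct quotation of Theorem~7.7.5 of H\"ormander~\cite{Hor83}, and is used only as a black box in the analysis of $a(x,\xi)$. Your outline is essentially the standard proof one finds in H\"ormander's book (cut-off near the critical point, non-stationary phase away from it, Taylor expansion of $e^{i\Lambda g_{y_0}}$, evaluation of the resulting Gaussian integrals via Fourier transform, and bookkeeping of remainders), so there is nothing to compare against in the paper itself.
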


In proof of lemma~\ref{lemma:Fourier_integral}, we need the following Schur test lemma(~\cite{HS78}, Theorem 5.2.). 
 See also Sogge's book~\cite{Sog93} Theorem 0.3.1
 for the related Young's inequality.
\begin{lem}[Schur test lemma]\label{lemma:Schur_test}
Let $X\;,\;Y$ be two measurable spaces. Let $T$ be an integral operator with the non-negative Schwartz kernel, i.e. 
$$Tf(x)=\int_{Y}K(x,y)f(y)dy.$$ 
If there exist functions $p(x)>0$ and $q(x)>0$ and numbers $\omega\;,\;\beta>0$ such that 
$$\int_Y K(x,y)q(y)dy\leq \omega p(x)$$ for almost all $x$ and
$$\int_X K(x,y)p(x)dx\leq \beta q(y)$$ for almost all $y$.
Then $T$ is a continuous operator $L^2 \rightarrow L^2$ with the operator norm
$$\|T\|_{L^2\rightarrow L^2}\leq \sqrt{\omega\beta}$$ 
\end{lem}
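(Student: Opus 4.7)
The plan is to derive the bound by a double application of Cauchy--Schwarz, with the weights $p$ and $q$ chosen precisely so that the two hypotheses can be invoked in turn. By density it suffices to prove the estimate for nonnegative $f$ with $\|f\|_{L^2}=1$, and by monotone convergence I can further assume $f$ is bounded with support of finite measure so that all integrals in sight are finite and Fubini applies freely.

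First I would write the integrand as
\[
K(x,y)f(y) = \bigl(K(x,y)\,q(y)\bigr)^{1/2}\cdot \bigl(K(x,y)/q(y)\bigr)^{1/2}f(y),
\]
and apply the Cauchy--Schwarz inequality in $y$ to obtain
\[
|Tf(x)|^{2} \le \Bigl(\int_{Y} K(x,y)\,q(y)\,dy\Bigr)\cdot \Bigl(\int_{Y} K(x,y)\,q(y)^{-1}|f(y)|^{2}\,dy\Bigr).
\]
The first hypothesis bounds the first factor by $\omega\, p(x)$, so
\[
|Tf(x)|^{2}\le \omega\, p(x)\int_{Y} K(x,y)\,q(y)^{-1}|f(y)|^{2}\,dy.
\]

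Next I would integrate over $X$, swap the order of integration via Fubini (the integrand is nonnegative), and apply the second hypothesis:
\[
\int_{X}|Tf(x)|^{2}dx \le \omega \int_{Y} q(y)^{-1}|f(y)|^{2}\Bigl(\int_{X} K(x,y)\,p(x)\,dx\Bigr)dy\le \omega\beta\,\|f\|_{L^{2}}^{2}.
\]
Extracting the square root gives $\|T\|_{L^{2}\to L^{2}}\le\sqrt{\omega\beta}$.

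There is essentially no obstacle here: the lemma is classical and the two hypotheses are tailored exactly to the two Cauchy--Schwarz steps. The only minor point requiring care is to justify Fubini and the finiteness of the intermediate integrals for general $f\in L^{2}$; this is handled by the truncation and monotone convergence argument mentioned above, after which the bound passes to the full $L^{2}$ setting.
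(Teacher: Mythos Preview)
Your argument is correct and is the standard proof of the Schur test via a weighted Cauchy--Schwarz inequality. The paper does not actually supply a proof of this lemma; it merely records the statement and cites Halmos--Sunder~\cite{HS78}, Theorem~5.2, as the source, so there is no ``paper's own proof'' to compare against. Your write-up is exactly the classical argument one finds in that reference.
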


We also need the following Cotlar-Stein lemma.(See Stein's book~\cite{Ste93}) 
\begin{lem}[Cotlar-Stein lemma]\label{lemma:Coltar's_lemma}
Assume a family of $L^2$ bounded operators $\{T_j\}_{j\in Z^n}$ and a sequence of positive constants 
$\{\gamma(j)\}_{j\in Z^n}$ satisfy
$$\|T^{*}_iT_j\|_{L^2\rightarrow L^2}\leq \{\gamma(i-j)\}^2\;,\; \|T_iT^{*}_j\|_{L^2\rightarrow L^2}\leq \{\gamma(i-j)\}^2$$
and 
$$M=\sum_{j\in Z^n}\gamma(j)<\infty.$$ 
Then the operator $T=\sum_{j\in Z^n}T_j$ satisfies 
$$\|T\|_{L^2\rightarrow L^2}\leq M.$$
\end{lem}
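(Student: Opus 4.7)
The plan is to use the standard $TT^\ast$ iteration: for any bounded operator $S$ on a Hilbert space one has $\|S\|^{2k}=\|(S^\ast S)^k\|$, and we exploit this with $k\to\infty$ so that any subexponential prefactor disappears in the $2k$-th root. To make the iteration legitimate, I would first reduce to a finite partial sum $T_N=\sum_{|j|\leq N}T_j$ and aim for a bound $\|T_N\|\leq M$ independent of $N$; the full conclusion for $T$ then follows by a routine weak limit / strong-operator-convergence argument (or simply by testing against $f,g\in L^2$ with compactly supported $\hat f,\hat g$ and recognising $\lr{T_N f,g}\to \lr{Tf,g}$).

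Next I would expand
\begin{equation*}
\|T_N\|^{2k}=\|(T_N^\ast T_N)^k\|\leq\sum_{j_1,\dots,j_{2k}} \|T_{j_1}^\ast T_{j_2}T_{j_3}^\ast T_{j_4}\cdots T_{j_{2k-1}}^\ast T_{j_{2k}}\|,
\end{equation*}
where the indices range over $|j_i|\leq N$. Each summand is estimated in two complementary ways. Grouping the factors as consecutive pairs $(T_{j_1}^\ast T_{j_2})(T_{j_3}^\ast T_{j_4})\cdots$ and using submultiplicativity together with the hypothesis gives the bound $\gamma(j_1-j_2)^2\gamma(j_3-j_4)^2\cdots\gamma(j_{2k-1}-j_{2k})^2$. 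Shifting the grouping by one gives $\|T_{j_1}^\ast\|\cdot\|T_{j_2}T_{j_3}^\ast\|\cdots\|T_{j_{2k-2}}T_{j_{2k-1}}^\ast\|\cdot\|T_{j_{2k}}\|\leq\gamma(0)^2\cdot\gamma(j_2-j_3)^2\cdots\gamma(j_{2k-2}-j_{2k-1})^2$, after noting that $\|T_j\|^2=\|T_j^\ast T_j\|\leq\gamma(0)^2$.

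Taking the geometric mean of the two bounds collapses the estimate to
\begin{equation*}
\|T_{j_1}^\ast T_{j_2}\cdots T_{j_{2k}}\|\leq\gamma(0)\cdot\gamma(j_1-j_2)\gamma(j_2-j_3)\cdots\gamma(j_{2k-1}-j_{2k}),
\end{equation*}
a chain of nearest-neighbour factors. I would then sum successively from the right: for each fixed $j_1$, summing over $j_2$ yields at most $M$, then summing over $j_3$ yields another $M$, and so on, which gives $M^{2k-1}$, while the outer sum over $j_1$ contributes at most $(2N+1)^n$. Hence $\|T_N\|^{2k}\leq\gamma(0)(2N+1)^n M^{2k-1}$, and taking $2k$-th roots produces $\|T_N\|\leq M\cdot\bigl(\gamma(0)(2N+1)^n/M\bigr)^{1/(2k)}$. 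Letting $k\to\infty$ with $N$ fixed gives $\|T_N\|\leq M$, and passing $N\to\infty$ completes the proof.

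The only genuine obstacle is the geometric-mean step: one must choose the two pairings so that the indices match up into a telescoping chain, and then be careful that the boundary factors ($\|T_{j_1}^\ast\|$ and $\|T_{j_{2k}}\|$ in the shifted grouping) contribute only a constant $\gamma(0)$ rather than something that grows with $k$. Everything else, including the passage from finite $N$ to $T=\sum_j T_j$, is soft functional-analytic bookkeeping.
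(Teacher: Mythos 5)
Your proof is correct and is the standard $TT^\ast$ iteration argument; the paper does not prove this lemma at all but simply cites Stein's book, where essentially this same argument (the two alternating pairings, the geometric mean yielding the nearest-neighbour chain $\gamma(0)\prod\gamma(j_i-j_{i+1})$, and the $2k$-th root killing the $(2N+1)^{n/(2k)}$ factor) appears. The only point to keep tidy is the passage from the uniform bound on finite partial sums to the full operator $T$, which you correctly flag as routine and which, in this paper's application, is immediate since the $T_j$ are dyadically localized so only finitely many act nontrivially on any suitable test function.
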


\subsection*{Acknowledgment}
The research of author was supported in part by National Sci-Tech Grant MOST 106-2115-M-007-003, 
107-2923-M-001-001 and National Center for Theoretical Sciences.

\end{document}